	\newtheorem{theo}{Theorem}[section]
	\newtheorem{lem}[theo]{Lemma}
	\newtheorem{prop}[theo]{Proposition}
	\newtheorem{remark}[theo]{Remark}
	\theoremstyle{definition}
	\newtheoremstyle{proof}
		{3pt}
		{3pt}
		{}
		{}
		{\itshape}
		{:}
		{.5em}
		{}
\newcommand{\Z}{\ensuremath{\mathbb{Z}} }
\newcommand{\N}{\ensuremath{\mathbb{N}} }
\newcommand{\R}{\ensuremath{\mathbb{R}} }
\newcommand{\E}{\ensuremath{\mathbb{E}} }
\renewcommand{\P}{\ensuremath{\mathbb{P}} }
\newcommand{\1}{\mathds 1 }
\newcommand{\skorcon}{ \xrightarrow{\tiny \mathcal L_{M_1}-s}}
\newcommand{\eps}{\varepsilon}
\newcommand{\ol}{\overline}
\newcommand{\wt}{\widetilde}
\newcommand{\din}{\Delta_{i,k}^n}
\newcommand{\diff}{\; d}
\newcommand{\eqdist}{\overset{\text{d}}{=}}
\newcommand{\eqspace}{\hspace{4em}}
\newcommand{\PP}{\mathbb{P}}
\newcommand{\D}{\mathbb{D}}
\newcommand{\F}{\mathcal{F}}
\newcommand{\f}{\mathcal{F}}
\newcommand{\laa}{\mathcal{L}}
\newcommand{\schw}{\stackrel{d}{\longrightarrow}}
\newcommand{\eqschw}{\stackrel{d}{=}}
\newcommand{\toop}{\stackrel{\PP}{\longrightarrow}}
\newcommand{\toas}{\stackrel{\mbox{\tiny a.s.}}{\longrightarrow}}
\newcommand{\fidi}{\stackrel{f.i.d.i.}{\longrightarrow}}
\newcommand{\tol}[1][]{%
\ifthenelse{\equal{#1}{}}{\stackrel{\laa}{\longrightarrow}}{\stackrel{L^{#1}}{\longrightarrow}}%
}
\newcommand{\tols}{\stackrel{\laa-\mbox{\tiny s}}{\longrightarrow}}
\newcommand{\ucp}{\xrightarrow{\mbox{\tiny u.c.p.}}}
\newcommand{\bee}{\begin{equation}}
\newcommand{\eee}{\end{equation}}
\newcommand{\bea}{\begin{eqnarray}}
\newcommand{\eea}{\end{eqnarray}}
\newcommand{\bean}{\begin{eqnarray*}}
\newcommand{\eean}{\end{eqnarray*}}
\newcommand{\sbs}{\ensuremath{S\beta S}}
\newcommand{\sbssp}[1]{S$\beta$S(\ensuremath{#1})}
\newcommand{\G}{\mathcal F}
\newcommand{\Cov}{\text{cov}}
\newcommand{\var}{\text{var}}
\begin{document}

\title{On limit theory for functionals of \\ stationary increments L\'evy driven moving averages}
\author{Andreas Basse-O'Connor\thanks{Department
of Mathematics, Aarhus University, 
E-mail: basse@math.au.dk.}  \and
Claudio Heinrich\thanks{Department
of Mathematics, Aarhus University \& Norwegian Computing Center Oslo \newline
\hspace*{1.5em} E-mail: claudio.heinrich@nr.no}  \and
Mark Podolskij\thanks{Department
of Mathematics, Aarhus University,
E-mail: mpodolskij@math.au.dk.}}

\maketitle

\begin{abstract}
In this paper we obtain  new limit theorems for variational  functionals of high frequency observations of stationary increments L\'evy driven moving averages. We will see that 
the asymptotic behaviour of such   functionals heavily depends on the kernel, the driving L\'evy process and the properties of the functional  under consideration. We show the ``law of large numbers'' for our class of statistics, which consists of three different cases. For one of the appearing limits, which we refer to as the ergodic type limit, we also prove the associated weak limit theory, which again consists of three different cases. Our work is related to  \cite{BasLacPod2016, BasPod2017}, who considered power variation functionals of stationary increments L\'evy driven moving averages. 

\end{abstract}

{\it Keywords}: \
fractional processes, limit theorems, self-similarity, stable processes.
\bigskip

{\it AMS 2010 subject classifications:}  60F05, ~60F17, ~60G22, ~60G52.

\section{Introduction}
\label{secMainResults}
 \label{secInt}
\setcounter{equation}{0}
\renewcommand{\theequation}{\thesection.\arabic{equation}}

The last two decades have witnessed a great progress in limit theory for high frequency functionals of continuous time stochastic processes. The interest in infill asymptotics has been motivated by the increasing availability of high frequency data in natural and social sciences such as finance, physics, biology or medicine.  Limit theorems in the high frequency framework are an important probabilistic tool for the analysis of small scale fluctuations of the underlying stochastic process and have numerous applications in mathematical statistics e.g.\ in the field of parametric estimation and testing.  Such limit theory has been investigated in various model classes including 
It\^o semimartingales (see e.g.\ \cite{BarGraJacPodShe2006,  Jac2008, JacPro2012}), (multi)fractional Brownian motion and related processes (see e.g.\ \cite{BarSur2013, BarCorPod2009, BarCorPod2011, BarCorPodWoe2009, GuyLeo1989, LebPod2017}), and many others.  

In this paper we investigate the asymptotic theory for high frequency functionals of stationary increments L\'evy driven moving averages. More specifically, we focus on 
an infinitely divisible process with stationary increments $(X_t)_{t\geq 0}$, defined on a  probability space $(\Omega, \mathcal F,   \mathbb P)$, given  as
\begin{equation} \label{def-of-X-43}
X_t= \int_{-\infty}^t \big\{g(t-s)-g_0(-s)\big\}\, dL_s,
\end{equation} 
where $L=(L_t)_{t\in \R}$ is a two-sided L\'evy process with no Gaussian component and $L_0=0$, and
$g, g_0:\R \to \R$ are continuous functions  vanishing on $(-\infty,0)$. In particular, this class of stochastic processes contains the linear fractional stable motion, which has the form \eqref{def-of-X-43} with $g(s)=g_0(s)=s_+^\alpha$ and the driving L\'evy process $L$ is symmetric stable. The linear fractional stable motion  is the most common heavy-tailed self-similar process, and hence exhibit both the Joseph and Noah effects of Mandelbrot, cf.\ \cite[Chapter~7]{SamTaq1994}.  
Fractional L\'evy processes are other examples of processes of the form \eqref{def-of-X-43}, see e.g.\ \cite[Chapter~2.6.8]{pip-taqqu}. 
Recent  papers  address various topics on  linear fractional stable motions including analysis of semimartingale property \cite{BasRos2016}, fine scale behavior \cite{BenCohIst2004,Gla2015}, simulation techniques \cite{CohLacLed2008} and statistical inference \cite{AyaHam2012, DanIst2015, MazOtrPod2017, PipTaqAbr2007}.
We consider the class of variational functionals of the type
\begin{align} \label{vn}
V(f;k)^n := a_n\sum_{i=k}^{n} f(b_n\Delta_{i,k}^{n} X),
\end{align}
where $f:\R \to \R$ is a measurable function, 
$(a_n)_{n\in\N}, (b_n)_{n\in\N}$ are suitable normalising sequences, and 
the operator $\Delta_{i,k}^{n} X$  denotes the 
$k$th order increments of $X$  defined as 
\begin{align} \label{filter}
\Delta_{i,k}^{n} X:= \sum_{j=0}^k (-1)^j \binom{k}{j} X_{(i-j)/n}, \qquad i\geq k.
\end{align}
The usual first and second order increments take the forms $\Delta^n_{i,1}X = X_{i/n}-X_{(i-1)/n}$ and $\Delta^n_{i,2}X= X_{i/n}-2X_{(i-1)/n}+X_{(i-2)/n}$. The reason for considering general $k$th order increments lies in statistical applications. Indeed,
using higher order increments, with $k\geq 2$, is often  desirable since this  gives rise to better convergence rates for various  estimators (cf. \cite{MazOtrPod2017}). This fact is also  seen in our asymptotic results Theorems~\ref{th1}, \ref{lsjdlfjssss} and \ref{the2ndGenVar}. 
The choice of the normalising sequences $(a_n)_{n\in\N}$ and $(b_n)_{n\in\N}$ depends on the interplay between the form of the kernel $g$, the infinitesimal properties of the driving L\'evy process $L$ 
and the growth/smoothness of the function $f$. 

The asymptotic behaviour of statistics of the form \eqref{vn} in the context of power variation, i.e. $f(x)=|x|^p$ for some $p>0$, has been characterized in the work \cite{BasLacPod2016, BasPod2017}. Further papers on related topics
include \cite{PipTaq2003} that investigate asymptotic normality for functionals of the type \eqref{vn} in the low frequency setting and for \emph{bounded} functions $f$ (the article \cite{PipTaqAbr2007} extends the results of \cite{PipTaq2003} to certain unbounded functions). Much more is known about weak limit theory for statistics of \emph{discrete} moving averages driven by heavy tailed i.i.d.\ noise; we refer to \cite{HoHsi1997, Sur2002, Sur2004} among others. However, the asymptotic theory is investigated mostly for bounded functions $f$ and under assumptions on the kernel and the noise process, which are not  comparable to ours. We will conclude the  discussion of related literature by mentioning the two papers  \cite[Section~5]{BenCohIst2004} and  \cite{Gla2015}, which  show ``law of large numbers'' results of the ergodic type in the context of fractional L\'evy processes. 

The aim of this work is to investigate the limit theorems for general functionals $V(f;k)^n$. We will start with first order asymptotic results, which consist of three different limits depending on the interplay 
between $f$, $g$ and $L$. More specifically, the ``laws of large numbers" include stable convergence
towards a certain random variable, ergodic type convergence to a constant when the driving motion
$L$ is assumed to be symmetric $\beta$-stable and convergence in probability to an integral of some stochastic process. In the second step we will also prove three weak limit theorems associated with the ergodic type convergence, consisting of a central limit theorem and two convergence results towards stable distributions. 
Motivated by statistical applications, such as  parametric estimation of linear fractional stable motion (cf. \cite{AyaHam2012, DanIst2015, MazOtrPod2017, PipTaqAbr2007}), we will apply our theory to  functions $f$ of the form
\begin{align} \label{functions}
& f_1(x)=|x|^p, \quad p>0 & \text{(power variation)} \nonumber \\
& f_2(x)=|x|^{-p}\1_{\{x\neq 0\}}, \quad p\in (0,1) & \text{(negative power variation)} \nonumber \\
& f_3(x)= \cos(ux) \text{ or } \sin(ux)  & \text{(empirical characteristic function)} \\
& f_4(x)= \mathds 1_{(- \infty, u]}(x)  & \text{(empirical distribution function)} \nonumber \\
& f_5(x)= \log(|x|) \1_{\{x\neq 0\}} & \text{($\log$-variation)} \nonumber
\end{align}    
among others. One of the major difficulties when showing weak limit theorems lies in the fact that 
the ideas suggested in e.g. \cite{HoHsi1997, PipTaq2003, Sur2002, Sur2004} in the setting of bounded
functions $f$ do not directly extend to a more general class of functions (also the proofs in \cite{BasLacPod2016} for the power variation case use the specific form of the function $ f(x)=|x|^p$).
As it has been noticed in earlier papers on discrete moving averages 
(see e.g.\ \cite{Sur2002, Sur2004} and references therein) the \emph{Appell rank} of the function $f$ often plays an important role for the weak limit theory. It is defined as $m^{\star}=\min\{m \in \N:~
\Phi_{\rho}^{(m)}(0)\neq 0\} $  with 
\[
\Phi_{\rho}(x):= \E[f(x+\rho S)]-\E[f(\rho S)],
\] 
where $S$ is a symmetric $\beta$-stable random variable with scale parameter $1$,  $\rho>0$  and $\Phi_{\rho}^{(m)}$ denotes the $m$th derivative of $x\mapsto \Phi_{\rho}(x)$.  In this paper we will show that it is much more convenient to impose assumptions on the function $\Phi$, rather than on the function $f$ itself, to obtain weak limit theorems for a wide class of functionals  $V(f;k)^n$. This is one of the main results of our work.

The paper is structured as follows. Section \ref{sec2} presents the required assumptions, the main  results and some remarks and examples. We present  some preliminaries
in Section~\ref{sec2a}. 
The proofs of the first order asymptotic 
results are collected in Section~\ref{secPro1}.  Section~\ref{sec4} is devoted to the proofs of  weak limit theorems, with a few more technical results postponed to Section~\ref{lsjdfghsdfoghs}.

\section{The setting and main results}
\label{sec2}
\setcounter{equation}{0}
\renewcommand{\theequation}{\thesection.\arabic{equation}}

We start by introducing various definitions, notations and assumptions that will be important for the presentation of the main results. We recall that the \textit{Blumenthal--Getoor index} of $L$ is defined as
\begin{align} \label{def-B-G}
\beta:=\inf\Big\{r\geq 0: \int_{-1}^1 |x|^r\,\nu(dx)<\infty\Big\}\in [0,2],
\end{align} 
where $\nu$ denotes the L\'evy measure of $L$. Furthermore,  $\Delta L_s:=L_s-L_{s-}$ with $L_{s-}:=\lim_{u\uparrow s,\,u<s} L_u$ stands for the jump size of $L$ at point $s$. If $L$ is stable with index of stability $\beta\in(0,2)$, the index of stability and the Blumenthal-Getoor index coincide, and both will be denoted by $\beta$. Let $\mathbb F=(\mathcal F_t)_{t\in\R}$ be the filtration generated by the L\'evy process $L$ and $(T_m)_{m\geq 1}$  be a sequence of $\mathbb F$-stopping times  that exhausts the jumps of $(L_t)_{t\geq 0}$. That is,  $ \{T_m(\omega):m\geq 1\} = \{t\geq 0: \Delta L_t(\omega)\neq 0\}$ and  $T_m(\omega)\neq T_n(\omega)$ for all $m\neq n$ with $T_m(\omega)<\infty$.

Our first set of conditions, which has been originally introduced in \cite{BasLacPod2016}, concerns the behaviour of the L\'evy measure $\nu$ at infinity and the functional form of the kernel $g$: \\ \\
 \textbf{Assumption~(A):} {\it
The function $g\!:\R\to\R$ satisfies 
\begin{equation}\label{kshs}
g(t)\sim  t^\alpha\qquad \text{as } t\downarrow 0\quad \text{for some }\alpha>0, 
\end{equation}
where $g(t)\sim w(t)$ as $t\downarrow 0$ means that $\lim_{t\downarrow 0}g(t)/w(t)= 1$. 
For some $\theta\in (0,2]$ it holds that $\limsup_{t\to \infty} \nu(x\!:|x|\geq t) t^{\theta}<\infty$ and $g - g_0$ is a bounded function in $L^{\theta}(\R_+)$.
 Furthermore,  $g \in C^k((0, \infty))$  and there  exists a $\delta>0$ such that  $|g^{(k)}(t)|\leq C t^{\alpha-k}$ for all $t\in (0,\delta)$, and such that both $|g'|$ and $|g^{(k)}|$ are in $L^\theta((\delta,\infty))$, and  are decreasing on $(\delta,\infty)$. }\\ \\
Assumption (A) ensures in particular that the process $X_t$, introduced in 
\eqref{def-of-X-43},  is well-defined in the sense of \cite{RajRos1989}, see\ \cite[Section~2.4]{BasLacPod2016}.  
When $L$ is a $\beta$-stable L\'evy process, we may and do  choose $\theta = \beta$.
By adjusting the L\'evy measure $\nu$, we may also include the case where \eqref{kshs} is replaced by  $g(t)\sim  c_0 t^\alpha$ as $t\downarrow 0$ for some $c_0\neq  0$.

For Theorem~\ref{th1}(i) below, we need to slightly strengthen Assumption (A)  if $\theta=1$: \\ \\
\textbf{Assumption~(A-log):} In addition to  (A) suppose that 
\begin{equation} 
\int_\delta^\infty |g^{(k)}(s)|^\theta \log(1/|g^{(k)}(s)|)\,ds<\infty.
\end{equation}
In order to formulate our main results, we require some more notation. 
For $p>0$ we denote by $C^p(\R)$ the space of $r:=[p]$-times continuous differentiable functions $f:\R\to\R$ such that $f^{(r)}$ is locally $(p-r)$-H\"older continuous if $p\not\in\N$.
We introduce the function $h_k\!:\R\to\R$  by 
\begin{align}\label{def-h-13}
h_k(x)&:=  \sum_{j=0}^k (-1)^j \binom{k}{j} (x-j)_{+}^{\alpha},\qquad x\in \R,\vspace{-0.75 em}
\end{align} 
where  $y_+:=\max\{y,0\}$ for all $y\in \R$. We recall that a sequence $(Z^n)_{n\in\N}$ of random variables defined on $(\Omega,\mathcal F, \mathbb{P})$ with values in a Polish space 
$(E,\mathcal E)$ converges stably in law to $Z$, which is defined on an extension $(\Omega',\mathcal F', \mathbb{P}')$ of the original probability space, if for all bounded continuous $g:E\to\R$ and for all bounded $\mathcal F$-measurable random variables $Y$ it holds that 
\[\E[g(Z^n)Y]\to\E'[g(Z)Y], \] 
where $\E'$ denotes the expectation on the extended space. We denote the stable 
convergence in law by $Z^n\tols Z$, and refer to \cite{HaeLus2015, Ren1963} for more details. Note, in particular, that stable convergence in law is a stronger property than convergence in law, but a weaker property than convergence in probability. In the framework of stochastic processes we write $Z^n \ucp Z$ for uniform convergence in probability, i.e.\ when  
$\sup_{t \in [0,T]} |Z_t^n - Z_t| \toop 0$ holds for all $T>0$. Furthermore, we denote by $Z^n \fidi Z$
the stable convergence of finite dimensional distributions.

\subsection{Law of large numbers} \label{sec2.1}

Our first theorem presents the ``law of large numbers" for the statistic $V(f;k)^n$ defined at \eqref{vn}. 
The sequence $(U_m)_{m\geq 1}$ below is i.i.d.\ $\mathcal U(0,1)$-distributed,  defined on an extension $(\Omega', \mathcal F', \mathbb P')$ and independent  of $\f$. Here and throughout the paper we denote by $S\beta S(\rho)$ the symmetric $\beta$-stable distribution with scale parameter 
$\rho>0$, that is $Y\sim S\beta S(\rho)$ if $\E[ \exp(i \theta Y)]= \exp(-|\rho \theta|^\beta)$ for all $\theta\in \R$.

\begin{theo} \label{th1}
Suppose Assumption (A) holds and  assume that the Blumenthal--Getoor index   satisfies $\beta<2$. 
We have  the following three cases:
\begin{itemize}
\item[(i)] Let $k>\alpha$ and suppose that (A-log) holds if  $\theta=1$. For some $p>\beta\vee\frac 1{k-\alpha}$ assume that $f\in C^p(\R)$ and $f^{(j)}(0)=0$ for $j=0,\dots, [p]$.  
With the normalising sequences $a_n=1$ and $b_n=n^{\alpha}$ we obtain the stable convergence 
\[
V(f;k)^n \tols \sum_{m:\,T_m\in [0,1]} \sum_{l=0}^\infty f\big(\Delta L_{T_m} h_k(l+U_m)\big).\]
\item[(ii)] Suppose that  $L$ is a symmetric $\beta$-stable L\'evy process with scale parameter 
$\rho_L>0$. Moreover, assume that  $\E[|f(L_1)|]<\infty$, and   $H:=\alpha + 1/\beta < k$. Then, setting $a_n=1/n$
and $b_n=n^H$, we obtain
\begin{align} \label{part22}
V(f;k)^n \toop  \E[f(\rho_0 S)],
\end{align}
where $S \sim S\beta S(1)$ and $\rho_0=\rho_L\|h_k\|_{L^\beta(\R)}$. 

\item[(iii)] Suppose that $(1\vee\beta)(k-\alpha)<1$ and  that $f$ is continuous and satisfies 
$|f(x)|\leq C(1\vee |x|^q)$ for all $x\in \R$,
for some $q, C>0$ with $q(k-\alpha)<1$. With the normalising sequences $a_n=1/n$
and $b_n=n^k$ it holds that 
\begin{equation} \label{part32}
 V(f;k)^n \toop \int_0^1 f(F_u)\, du 
 \end{equation}
where $(F_u)_{u\in \R}$ is  defined by 
\begin{equation}\label{FDefGenVar}
F_u=  \int_{-\infty}^u g^{(k)}(u-s) \,dL_s\quad \text{a.s.\ for all }u\in \R.
 \end{equation} 
\end{itemize}
\end{theo}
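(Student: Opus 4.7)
The plan is to prove the three parts by three distinct techniques dictated by the normalisation $(a_n,b_n)$ and the resulting regime; in each case the heuristic is to identify which aspect of $X$ dominates at the scale $b_n$.

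For part~(i), $b_n=n^\alpha$ matches the local jump scale of $X$ near a jump of $L$, so the limit should come from the large jumps. I would decompose $L=L^{>a}+L^{\leq a}$ at a truncation level $a>0$, drive the main term with $L^{>a}$, and let $a\downarrow 0$ at the end. Since $g(t)\sim t^\alpha$ as $t\downarrow 0$, for $i/n$ just above a jump time $T_m$ of $L^{>a}$ one has $n^\alpha \Delta_{i,k}^n X^{>a} \approx \Delta L_{T_m}\, h_k(i-nT_m)$, so regrouping $\sum_i f(n^\alpha \Delta_{i,k}^n X^{>a})$ by nearest jump, with $l=i-\lceil nT_m\rceil$ and $U_m^n=\lceil nT_m\rceil - nT_m$, produces the target sum. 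The input from the jump-location lemma is that $(U_m^n)_m$ converges jointly with $(T_m,\Delta L_{T_m})$ to an i.i.d.\ $\mathcal U(0,1)$ sequence independent of $\f$. The main effort is controlling three remainders: the contribution from $L^{\leq a}$, the far-field of the kernel (where Assumption~(A-log) absorbs the logarithmic loss when $\theta=1$), and the non-leading part of $X^{>a}$ away from the jump. Here the vanishing $f^{(j)}(0)=0$ for $j\leq [p]$ together with $p>\beta\vee 1/(k-\alpha)$ is what forces each remainder of order $f(n^\alpha\cdot\mathrm{small})$ to be summably negligible.

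For part~(ii), with $L$ symmetric $\beta$-stable and $H=\alpha+1/\beta<k$, I would reduce to the self-similar case $g(t)=g_0(t)=t_+^\alpha$, i.e.\ $X$ equal to a linear fractional stable motion $\WX$. Since $g-g_0\in L^\beta$ and the $k$-th order difference kills polynomial corrections, $n^H \Delta_{i,k}^n X$ differs from $n^H\Delta_{i,k}^n \WX$ by an asymptotically negligible term in the $S\beta S$ scale, uniformly in $i\leq n$. For $\WX$, self-similarity together with stationarity of increments yields that $(n^H\Delta_{i,k}^n \WX)_{i\geq k}$ has the same joint law as $(\Delta_{i,k}^1 \WX)_{i\geq k}$, a stationary sequence with marginal $S\beta S(\rho_0)$. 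The latter is a non-trivial stable moving average of the i.i.d.\ Poisson-point representation of $L$, hence ergodic, so Birkhoff's theorem gives $(1/n)\sum_{i=k}^n f(\Delta_{i,k}^1 \WX)\to \E[f(\rho_0 S)]$ almost surely, and the convergence is transferred from $\WX$ to $X$ using the stable-scale bound on the difference together with the assumption $\E|f(L_1)|<\infty$.

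For part~(iii), $b_n=n^k$ suggests that $n^k\Delta_{i,k}^n X$ behaves like a genuine $k$-th derivative of $X$, and I would first prove $n^k\Delta_{i,k}^n X\to F_{i/n}$ in probability with a uniform estimate. Writing $\Delta_{i,k}^n X=\int \Psi_{i,k}^n(s)\,dL_s$, finite-difference calculus gives $n^k\Psi_{i,k}^n(s)\to g^{(k)}(i/n-s)$ for $s<(i-k)/n$, while the boundary strip $s\in[(i-k)/n,i/n]$ contributes a term whose $L^\beta$-scale is controlled by the hypothesis $(1\vee\beta)(k-\alpha)<1$; this is the exact condition that simultaneously makes $F_u$ a well-defined stochastic integral. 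A Riemann-sum argument then gives $(1/n)\sum_{i=k}^n f(F_{i/n})\to \int_0^1 f(F_u)\,du$, where the polynomial growth $|f(x)|\leq C(1\vee|x|^q)$ with $q(k-\alpha)<1$, combined with the corresponding moment bound on $\sup_u|F_u|$, provides the uniform integrability needed to swap the limit with $f$. The main obstacle I expect is part~(i): extending the methods of \cite{BasLacPod2016, BasPod2017}, which are tailored to $f(x)=|x|^p$, to a general $f\in C^p(\R)$ with vanishing Taylor coefficients at $0$ requires careful book-keeping of the Taylor expansion of $f$ around $0$ up to order $[p]$, together with sharp moment estimates on the compensated small-jumps integral $\int \Psi_{i,k}^n(s)\,dL^{\leq a}_s$ and a delicate treatment of the borderline regime $\theta=1$ via Assumption~(A-log).
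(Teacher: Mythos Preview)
Your outline for parts~(i) and~(ii) is correct and matches the paper's strategy closely: reduction to a compound Poisson driver followed by a Taylor expansion of $f$ around the dominant jump contribution for~(i), and approximation by the linear fractional stable motion together with Birkhoff's ergodic theorem for~(ii).

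For part~(iii), however, there is a genuine gap. You propose to pass through the intermediate Riemann sum $(1/n)\sum_{i=k}^n f(F_{i/n})$ and to justify the passage to $\int_0^1 f(F_u)\,du$ via a ``moment bound on $\sup_u|F_u|$''. Neither step is available. First, the process $F$ is only known to have sample paths in $L^q([0,1])$ almost surely (for $q$ with $q(k-\alpha)<1$), not continuous or even locally bounded paths; when $k>\alpha$ the kernel $g^{(k)}(t)\sim c\,t^{\alpha-k}$ is singular at the origin, and path continuity of $F$ cannot be expected. Consequently the Riemann sums $(1/n)\sum_i f(F_{i/n})$ need not converge to $\int_0^1 f(F_u)\,du$ pathwise. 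Second, each $F_u$ is $S\beta S$-distributed, so $\E[|F_u|^q]=\infty$ whenever $q\geq\beta$; since the relevant range of $q$ may well exceed $\beta$ (the hypothesis only forces $q<1/(k-\alpha)$, and $1/(k-\alpha)>1\vee\beta$), no moment bound on $\sup_u|F_u|$ is available.

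The paper circumvents both obstacles by never separating $n^k\Delta_{i,k}^n X$ from $F_{i/n}$. It argues entirely pathwise in the Sobolev space $W^{k,q}([0,1])$, proving the deterministic statement that for every $\xi\in W^{k,q}$ one has $(1/n)\sum_{i=k}^n f(n^k\Delta_{i,k}^n\xi)\to\int_0^1 f(\xi^{(k)}_s)\,ds$. The key tool is the elementary bound
\[
|n^k\Delta_{i,k}^n\xi|\leq k^{k-1}\int_{(i-k)/n}^{i/n} n\,|\xi^{(k)}_s|\,ds,
\]
which transfers control from $n^k\Delta_{i,k}^n\xi$ directly to local $L^1$-averages of $\xi^{(k)}$. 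One then approximates $\xi$ in $W^{k,q}$ by $\xi^m\in C^{k+1}$, for which the claim is easy, and closes the argument by a careful case-splitting of the index set according to whether $n^k|\Delta_{i,k}^n\xi|$ and $n^k|\Delta_{i,k}^n\xi^m|$ are large or small. Uniform integrability of $(|\xi^{m,(k)}|^q)_{m\geq 1}$ in $L^1([0,1])$ is what replaces your unavailable supremum bound. This deterministic lemma is finally applied pathwise to $X$, using that $X$ almost surely lies in $W^{k,q}$ with $X^{(k)}=F$.
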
 

Theorem~\ref{th1} may be viewed as a generalization of \cite[Theorem 1.1]{BasLacPod2016} from power variation to general functionals.
The limiting random variable in Theorem~\ref{th1}(i) is indeed well-defined, as we show in Lemma~\ref{LimEx} below. 
We remark that one of the conditions of Theorem~\ref{th1}(i) is the restriction $\alpha<k-1/p$. This restriction on the parameter 
$\alpha$ gets weaker when $p$ is getting larger, but on the other hand the condition $f\in C^p(\R)$ is stronger for a larger 
$p$. Thus, there is a trade-off between these two conditions. 

The three cases of the theorem are closely related to the three limits for the 
power variation derived in \cite[Theorem 1.1]{BasLacPod2016}. Let us briefly explain the main intuition behind Theorems \ref{th1}(ii) and (iii).

The crucial step in the proof of Theorem \ref{th1}(ii) is the approximation 
\begin{align} \label{approximation}
\Delta_{i,k}^{n} X \approx \Delta_{i,k}^{n} Y \qquad \text{in probability}
\end{align}
where $Y$ is the  linear fractional stable motion defined via
\begin{align} \label{flm}
Y_t :=  \int_{\R} \{(t-s)_{+}^\alpha  - ( -s)_{+}^\alpha \}\, dL_s.
\end{align} 
It is well known that the process $Y$ 
is $H$-self-similar and its increment process is  ergodic  (see e.g.\ \cite{CamHarWer1987}). Hence, under assumptions
of Theorem \ref{th1}(ii), we may conclude by Birkhoff's ergodic theorem for e.g.\ $k=1$:
\begin{align*}
V(f;1)^n \approx \frac{1}{n} \sum_{i=1}^{n} f \left(n^H(Y_{i/n} - Y_{(i-1)/n}) \right) 
\eqschw \frac{1}{n} \sum_{i=1}^{n} f \left(Y_i - Y_{i-1} \right) \toas \E[f(Y_1-Y_0)]. 
\end{align*}
This is exactly the statement of \eqref{part22} for the case $k=1$. 

Under assumptions of Theorem \ref{th1}(iii) it turns out that the stochastic process $F$ defined at 
\eqref{FDefGenVar} is a version of the $k$th derivative of $X$. Hence, we conclude by Taylor expansion:
\[
V(f;k)^n=\frac 1 n \sum_{i=k}^n f(n^k\din X)\approx \frac 1 n \sum_{i=k}^n f\left(F_{(i-1)/n}
\right)\toop\int_0^1 f(F_u)\,du,\quad \text{as }n\to\infty.
\]
This explains the statement of Theorem \ref{th1}(iii).

\begin{remark} \label{rem1} \rm
In contrast to the power variation case investigated in  \cite{BasLacPod2016}, the assumptions of Theorems~\ref{th1}(i) and (ii), and of Theorems~\ref{th1}(i) and (iii), are not mutually exclusive, and hence two limit theorems can hold at the same time. This phenomenon appears already in the simpler setting of L\'evy processes. Assume for example that $L$ is a symmetric $\beta$-stable L\'evy process and consider the function 
$f(x)=\sin^2(x)$.  If $k=1$ and we choose $a_n=b_n=1$ we deduce the convergence
\[
\sum_{i=1}^{n} \sin^2(\Delta_{i,1}^{n} L) \toas \sum_{m:\,T_m\in[0,1]}\sin^2(\Delta L_{T_m}) < \infty,
\]
using, in particular,  $|f(x)| \leq Cx^2$. On the other hand when we choose the normalising sequences $a_n=n^{-1}$ and $b_n=n^{1/\beta}$ we readily deduce by strong law of large numbers that 
\[
\frac 1 n \sum_{i=1}^{n} \sin^2(n^{1/\beta}\Delta_{i,1}^{n} L)\ \eqdist\  \frac 1 n 
\sum_{i=1}^{n} \sin^2(L_i - L_{i-1}) \toas \E[\sin^2(L_1)].
\]
This example shows that we can obtain two different limits for two different scalings. \qed
\end{remark}

In the next step we present a functional version of Theorem~\ref{th1}. For this purpose we introduce 
the sequence of processes 
\begin{align*} 
V(f;k)^n_t := a_n\sum_{i=k}^{[nt]} f(b_n\Delta_{i,k}^{n} X).
\end{align*}
In the proposition below we will use the Skorokhod $M_1$-topology, which was introduced in \cite{Sko1956}.  For a  detailed exposition  we refer to \cite{Whi2002}.

\begin{prop} \label{prop1} 
Suppose Assumption (A) holds and  assume that the Blumenthal--Getoor index   satisfies $\beta<2$. 
We have  the following three cases:
\begin{itemize}
\item[(i)] Under conditions of Theorem \ref{th1}(i) we obtain the stable convergence
\[
V(f;k)^n_t \fidi V(f;k)_t:=\sum_{m:T_m\in [0,t]} \sum_{l=0}^\infty f\big( \Delta L_{T_m} h_k(l+U_m)\big).
\]
Moreover, the stable convergence also holds 
with respect to  Skorokhod $M_1$-topology if additionally the following assumption is satisfied:
\begin{enumerate}[label=(FC)]\setcounter{enumi}{1}
\item \label{con3} 
Each of the two functions $x\mapsto f(x) \mathds 1_{\{x\geq 0\}}$ and $x\mapsto f(x) \mathds1_{\{x< 0\}}$ is either non-negative or non-positive.
\end{enumerate}
\item[(ii)] Under conditions of Theorem \ref{th1}(ii) we deduce that 
\begin{align*} 
V(f;k)^n_t \ucp  t\E[f(\rho_0 S)],
\end{align*}
where $S$ and $\rho_0$ have been introduced in \eqref{part22}. 

\item[(iii)] Under conditions of Theorem \ref{th1}(iii) we have 
\begin{equation*} 
 V(f;k)^n_t \ucp \int_0^t f(F_u)\, du 
 \end{equation*}
where $(F_u)_{u\in \R}$ has been defined at  \eqref{FDefGenVar}.
\end{itemize}
\end{prop}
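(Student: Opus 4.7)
I would extend the jump-decomposition argument of Theorem \ref{th1}(i) to simultaneously handle finitely many times $0 < t_1 < \dots < t_d \leq T$. Split $L = L^{(\ep)} + M^{(\ep)}$ into the compound Poisson part carrying jumps of modulus exceeding $\ep$ plus a small-jump martingale. For $L^{(\ep)}$, which has only finitely many jumps $T_m$ in $[0,T]$, the localization around each $T_m$ used in Theorem \ref{th1}(i) shows that the contribution to $V(f;k)^n_{t_j}$ equals $\sum_{T_m \leq t_j,\, |\Delta L_{T_m}| > \ep} \sum_{l \geq 0} f\bigl(\Delta L_{T_m} h_k(l + U_m)\bigr) + o_\PP(1)$, and the small-jump contribution is $o_\PP(1)$ uniformly in $j$ by the moment bounds already derived for Theorem \ref{th1}(i); letting $\ep \downarrow 0$ gives the stable $\fidi$-convergence. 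For the $M_1$-strengthening under \ref{con3}, decompose $f = f_+ - f_-$ with $f_+(x) = f(x)\mathds{1}_{\{x \geq 0\}}$ and $f_-(x) = -f(x)\mathds{1}_{\{x < 0\}}$: by (FC), both $f_\pm$ are of constant sign, so both $t \mapsto V(f_\pm;k)^n_t$ are monotone in $t$. Since monotone \cadlag{} processes whose finite dimensional distributions converge to those of a monotone \cadlag{} limit are automatically tight in the Skorokhod $M_1$ topology (see \cite{Whi2002}), the $M_1$-stable convergence follows from the $\fidi$-convergence above.

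\textbf{Plan for (ii).} I would promote the pointwise convergence of Theorem \ref{th1}(ii) to uniform convergence via a monotonicity trick. Decomposing $f = f^+ - f^-$, both $t \mapsto V(f^\pm;k)^n_t$ are non-decreasing. Re-running the proof of Theorem \ref{th1}(ii) with time horizon $t$ in place of $1$ — the argument is scale-invariant in the horizon — gives pointwise convergence in probability of $V(f^\pm;k)^n_t$ to the continuous linear limit $t\,\E[f^\pm(\rho_0 S)]$ for each $t \in [0,T]$. A classical Dini-type argument then upgrades pointwise-in-probability convergence of monotone processes to a continuous deterministic limit into uniform convergence on $[0,T]$ in probability.

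\textbf{Plan for (iii).} Here I would trace through the proof of Theorem \ref{th1}(iii) and verify that each approximation is uniform in $t$. Writing
\[
V(f;k)^n_t \;\approx\; \frac{1}{n}\sum_{i=k}^{[nt]} f\bigl(F_{(i-1)/n}\bigr) \;\approx\; \int_0^t f(F_u)\,du,
\]
the first approximation comes from the Taylor-type estimate $n^k\Delta^n_{i,k}X = F_{(i-1)/n} + R^n_i$ with $\sum_{i=k}^n |R^n_i| = o_\PP(1)$, already derived for Theorem \ref{th1}(iii); since absolute values inside the sum control the supremum over $t$, the uniform continuity of $f$ on compacts together with tightness of $\max_{i\leq n}|F_{(i-1)/n}|$ give the uniform-in-$t$ bound. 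The second approximation is a Riemann sum convergence for the continuous-path random function $u\mapsto f(F_u)$, which under Assumption (A) holds almost surely uniformly on compacts.

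\textbf{Main obstacle.} The most delicate point is the $M_1$-tightness in (i): without condition \ref{con3}, the summands $f(\Delta L_{T_m} h_k(l+U_m))$ accumulating around each jump $T_m$ can exhibit mixed signs as $l$ varies (since $h_k$ itself changes sign), producing asymptotic oscillations incompatible with $M_1$-tightness. Condition (FC) is precisely what ensures that the contribution from each jump builds up monotonically, rendering the monotone decomposition argument viable.
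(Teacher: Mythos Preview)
Your plans for (i) and (ii) are essentially along the paper's lines, but there is a genuine gap in the $M_1$-upgrade of (i). After establishing $M_1$-convergence of the monotone pieces $V(f_\pm;k)^n$ separately, you still have to recombine them into $V(f;k)^n = V(f_+;k)^n - V(f_-;k)^n$. Addition is \emph{not} continuous on $\D([0,T];\R)$ under $M_1$ in general; it is continuous at a pair $(z^+,z^-)$ only when $z^+$ and $z^-$ share no discontinuity points (cf.\ \cite[Theorem~12.7.3]{Whi2002}). The paper makes precisely this argument: the limit $Z^+$ jumps only at positive jumps of $L$ and $Z^-$ only at negative jumps of $L$, so they never jump simultaneously, and the continuous mapping theorem applies. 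You must supply this step; without it the passage from monotone-piece convergence to $M_1$-convergence of $V(f;k)^n$ is unjustified. The paper in fact stresses that this is the sole reason condition \ref{con3} is needed.

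Your plan for (iii) diverges from the paper's and contains a substantive error. You invoke Riemann-sum convergence for the ``continuous-path random function $u\mapsto f(F_u)$'', but under the conditions of Theorem~\ref{th1}(iii) the process $F$ is only shown to have sample paths in $L^q([0,1])$ almost surely, not continuous paths; the proof of Theorem~\ref{th1}(iii) (Lemma~\ref{fVar}) proceeds through a delicate $L^q$-approximation rather than pointwise Riemann sums, and the Taylor-type remainder bound $\sum_i |R^n_i|=o_\PP(1)$ you assume is not established in the general $L^q$ setting. The paper sidesteps all of this by using exactly the same positive/negative-part plus Dini argument you correctly propose for (ii): the limit $t\mapsto \int_0^t f(F_u)\,du$ is continuous, and $V(f^\pm;k)^n_t$ are monotone in $t$, so pointwise convergence (from Theorem~\ref{th1}(iii) applied at each $t$) upgrades to u.c.p.\ convergence. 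You should adopt the same approach for (iii).
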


We remark that the uniform convergence results of Proposition~\ref{prop1}(ii) and (iii) are easily obtained
from Theorem~\ref{th1}(ii) and (iii) by the following argument. Observe the decomposition $f=f^{+}
-f^{-}$, where $f^{+}$ (resp. $f^{-}$) denotes the positive (resp.\ negative) part of $f$. Then $f^{+},
f^{-}$ satisfy the same assumptions as $f$ in the setting of Theorem~\ref{th1}(ii) and (iii). 
Furthermore, since $f^{+}, f^{-}\geq 0$, the statistics $V(f^{+};k)^n_t$ and $V(f^{-};k)^n_t$ are increasing in $t$ and the corresponding limits in Proposition \ref{prop1}(ii) and (iii) are continuous in 
$t$. Consequently, the uniform convergence is obtained from the  pointwise convergence by Dini's theorem.

\subsection{Weak limit theorems} \label{sec2.2}

In this section we present  weak limit theorems associated to the ergodic type limit from 
Theorem~\ref{th1}(ii). Throughout this section we assume that $\E[|f(S)|]<\infty$, where  $S \sim S\beta S(1)$. As  mentioned in the introduction, the crucial quantity in this context is the function 
$\Phi_{\rho}$ defined via
\begin{align} \label{Phi}
\Phi_\rho(x)= \E[f(x+\rho S)]-\E[f(\rho S)],\qquad x\in \R, \,\rho>0.
\end{align}
 Similarly to limit theory for discrete moving averages, see e.g.\ 
 \cite{HoHsi1997,Sur2002, Sur2004}, the Appell rank of the function $f$ often plays a key role for  
the asymptotic behaviour of the statistic $V(f;k)^n- \E[f(\rho_0 S)]$. In our setting, the Appell rank $m^\star_\rho$  is defined as 
\[
m^{\star}_\rho:=\min\{r\in \N\,:\, \Phi^{(r)}_\rho(0)\neq 0\},
\]
where $\Phi^{(r)}_\rho(x):=\frac{\partial^r}{\partial x^r} \Phi_\rho(x)$ for $r=1,2,\dots$. Note that we have Appell rank one if and only if 
$\Phi'_\rho(0)\neq 0$, and Appell rank greater or equal two if and only if $\Phi_\rho'(0)=0$.  The Appell rank is an analogue of the Hermite rank  used in the context of Gaussian processes. However, the non-Gaussian case is  usually much more complicated due to the lack of orthogonal series expansions. While the Appell rank $m^{\star}_\rho$ usually depends on the parameter 
$\rho$, we always have that $m^{\star}_\rho= 1$ for all $\rho>0$ in  the framework of the imaginary part of the characteristic function $f_3(x)=\sin(ux)$ and the empirical distribution function $f_4$ (cf.\ Remark~\ref{lsdjflsdhfh}). Moreover,  $m^{\star}_\rho >1$ for all $\rho>0$ when   $f$ is an even function, in fact, in this case we have that $0=\frac{\partial }{\partial x} \Phi_\rho(0)=\frac{\partial^2 }{\partial x\partial \rho} \Phi_\rho(0)$ (cf.\ Remark~\ref{lsdjflsdhfh}). 
Indeed, $m^{\star}_\rho>1$ for all $\rho>0$ therefore holds in the setting of power variations  $f_1$ and $f_2$, real part of the characteristic function $f_3(x)=\cos(ux)$ and the log-variation $f_5$. 

For our weak limit theorems we will need the following smoothness assumptions on $\Phi_\rho$:

\noindent
\textbf{Assumption (B):} {\it
The function $(\rho,x)\mapsto \Phi_\rho(x)$ is   $C^{1,2}((0,\infty)\times \R)$, and 
for all $\eps\in (0,1)$ there are    $p\in [0,1]$ and $C>0$ such that,   for all 
$\rho\in[\eps,\eps^{-1}]$ and $x, y\in \R$  
\begin{align}\label{Est2}
&\Big|\Phi_\rho(x)-\Phi_\rho(y)\Big|\leq C|x-y|^p,
  \\ 
& 
\Big|\frac{\partial^{j+r} }{\partial x^j\partial \rho^r}\Phi_\rho(x)\Big|\leq C \qquad \text{for all }j=0, 1, 2\text{ and }r=0,1\textrm{ with } r+j>0. \label{Est3}
\end{align}
}


Note that \eqref{Est3} implies Lipschitz continuity of $\Phi_\rho$, and therefore the $p$-H\"older assumption
\eqref{Est2} may be viewed as a growth condition on $\Phi_\rho$. In particular, \eqref{Est2} implies that $|\Phi_\rho(x)|\leq C|x|^p$. Note also that \eqref{Est3} implies  \eqref{Est2} with $p=1$, however, in several cases we need $p<1$.   Before presenting our main weak limit theorems, we  remark that 
Assumption~(B) is satisfied for our key examples, its proof is  postponed to the end of Section~\ref{lsjdfghsdfoghs}.

\begin{remark} \rm \label{examp} The following two classes of functions $f$ satisfies (B). 
\begin{enumerate}[wide, labelwidth=!, labelindent=0pt, label=(\roman*)]
\item (\textit{Bounded functions}). Any bounded measurable function $f$ satisfies (B) for any $p\in [0,1]$. 
This covers, in particular,  the empirical distribution function $f_4 (x)= \1_{(-\infty,u]}(x)$, and  the empirical characteristic functions $f_3(x)=\sin(u x)$ or $f_3=\cos(ux)$ from \eqref{functions}, where   $u\in\R$ is a fixed real number. 

\item (\textit{A class of unbounded functions}).  Suppose that   $f\in L^1_{\textrm{loc}}(\R)$ and there exists  $K>0$ and $q\leq 1$ such that   $f\in C^3([-K,K]^c)$ and $|f'(x)|, |f''(x)|, |f'''(x)|\leq C$ and $|f'(x)|\leq C |x|^{q-1}$   for $|x|>K$. Then   $f$ satisfies (B) with $p=q$ when $q> 0$, and $p=0$ when $q<0$. This covers, in particular, the power functions $f(x)=|x|^q\1_{\{x\neq 0\}}$ 
where $q\in (-1,0)\cup (0,1]$, that is, $f_1$ and $f_2$ from \eqref{functions}.  Furthermore, the logarithmic function $f_5(x)=\log (|x|)\1_{\{x\neq 0\}}$ from \eqref{functions} is also covered by the above condition and hence satisfies (B). In this case we may  choose any $p\in (0,1]$. \qed
\end{enumerate}
\end{remark}

In the following we will need  to strengthen  Assumption (A). \\ \\
\textbf{Assumption (A2):} {\it Suppose
that in addition to Assumption (A) we have $|g^{(k)}(t)|\leq Ct^{\alpha-k}$ for all $t>0$. For the function $\zeta:[0,\infty)\to \R$ defined as $\zeta(t)=g(t)t^{-\alpha}$ the limit $\lim_{t\downarrow 0} \zeta^{(j)}$ exists in $\R$ for all $j=0,...,k$.} \\ \\
In the following two theorems we present weak limit results associated with Theorem~\ref{th1}(ii) in the case of ``short memory'' (small $\alpha$) or ``long memory'' (large $\alpha$). The long memory case depends heavily on the Appell rank of the function $f$, whereas the short memory case does not depend on the Appell rank.  
In the theorems below we follow the notation of Theorem \ref{th1}, i.e. $L$ is a symmetric $\beta$-stable  L\'evy process with scale parameter $\rho_L$, $(X_t)$ is given by \eqref{def-of-X-43},  $H=\alpha+1/ \beta$,  $\rho_0=\rho_L\|h_k\|_{L^\beta(\R)}$,  $S\sim S\beta S(1)$, $a_n=1/n$
and $b_n=n^H$.

\begin{theo}[``Short memory'']\label{lsjdlfjssss}

Suppose  Assumption~(A2) holds,      Assumption~(B) holds with $p<\beta/2$, and $\E[f(L_1)^2]<\infty$.  Then for all   $\alpha\in(0,k-2/\beta)$ we have
\begin{equation}\label{clt}
\sqrt n\bigg( V(f;k)^n - \E[f(\rho_0 S)]\bigg)\tol \mathcal N(0,\eta^2),
\end{equation}
where the variance is given as $\eta^2:=\lim_{m\to\infty}\eta_m^2$ with $\eta_m$ defined in \eqref{eta_mDef}. 
\end{theo}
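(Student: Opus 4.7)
\emph{Plan.} The strategy is to reduce, via an approximation argument and the self-similarity of the linear fractional stable motion, to a central limit theorem for a stationary $\beta$-stable moving average, and then to establish the latter through an $m$-dependent truncation of the driving innovations.

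As a first step I would replace $X$ by the linear fractional stable motion $Y$ from \eqref{flm} inside the sum defining $V(f;k)^n$. Assumption~(A2) is tailored to this: the global estimate $|g^{(k)}(t)|\leq Ct^{\alpha-k}$ together with the existence of $\lim_{t\downarrow 0}\zeta^{(j)}(t)$ for $\zeta(t):=g(t)t^{-\alpha}$ allow one to control $b_n(\din X-\din Y)$ in $L^q$ for suitable $q<\beta$, and combined with the H\"older estimate \eqref{Est2} from Assumption~(B) this gives $\sqrt n\,(V(f;k)^n-\E[f(\rho_0S)])=\rnn\sum_{i=k}^n(f(b_n\din Y)-\E[f(\rho_0 S)])+o_{\PP}(1)$. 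By the $H$-self-similarity of $Y$, the collection $(b_n\din Y)_{i=k,\dots,n}$ equals in joint distribution the stationary sequence
\[
Z_i:=\int_{-\infty}^i h_k(i-s)\,dL_s,\qquad i\in\Z,
\]
a $\beta$-stable moving average with $Z_i\sim \sbs(\rho_0)$. It therefore suffices to prove $\rnn\sum_{i=k}^n(f(Z_i)-\E[f(\rho_0 S)])\tol \n(0,\eta^2)$.

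For this I would introduce, for each $m\in\N$, the truncation
\[
Z_i^{(m)}:=\int_{i-m}^i h_k(i-s)\,dL_s,\qquad R_i^{(m)}:=Z_i-Z_i^{(m)},
\]
so that $Z_i^{(m)}$ and $R_i^{(m)}$ are independent, $R_i^{(m)}\sim \sbs(\rho_{R,m})$ with $\rho_{R,m}\to0$ as $m\to\infty$ (using $|h_k(u)|\leq Cu^{\alpha-k-1}$ at infinity together with $\al<k-1/\be$), and the sequence $(Z_i^{(m)})_i$ is $2m$-dependent. Conditioning on $R_i^{(m)}$ gives
\[
\E[f(Z_i)\mid R_i^{(m)}]=\Phi_{\rho_{L,m}}(R_i^{(m)})+\E[f(\rho_{L,m}S)],\qquad \rho_{L,m}^\beta=\rho_0^\beta-\rho_{R,m}^\beta,
\]
and similarly one writes $\E[f(Z_i)\mid Z_i^{(m)}]$ in terms of $\Phi$. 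These identities enable the orthogonal decomposition $f(Z_i)-\E[f(\rho_0 S)]=B_i^{(m)}+A_i^{(m)}$, where $B_i^{(m)}$ is a mean-zero, $2m$-dependent function of $(\ep_j)_{|j-i|\leq m}$ and $A_i^{(m)}$ is a mean-zero residual. The classical $m$-dependent CLT then gives $\rnn\sum_i B_i^{(m)}\tol \n(0,\eta_m^2)$, and a Cauchy-sequence argument based on the definition in \eqref{eta_mDef} yields $\eta_m^2\to\eta^2<\infty$. A standard three-epsilon argument completes the proof provided one can establish
\begin{equation}\label{residualbd}
\lim_{m\to\infty}\sup_n\,\E\Big[\Big|\rnn\sum_{i=k}^n A_i^{(m)}\Big|^2\Big]=0.
\end{equation}

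The hard part is \eqref{residualbd}. By stationarity it reduces to a summability estimate on $|\mathrm{Cov}(A_0^{(m)},A_r^{(m)})|$ that is uniform in $m$ and vanishes as $m\to\infty$. For small $|r|\leq m$ one needs only a uniform $L^2$-bound on $A_0^{(m)}$, and this is exactly where the condition $p<\be/2$ intervenes: by \eqref{Est2} one has $|\Phi_\rho(x)|\leq C|x|^p$, and since $\beta$-stable variables have finite moments of every order $<\beta$, squaring gives an $L^1$ bound using $2p<\beta$. For $|r|>m$ one separates the driving innovations along the disjoint intervals $(-\infty,r-m]$, $(r-m,0]$, $(0,r-m]$ and $(r-m,r]$, exploits the resulting independence structure, and uses the smoothness estimates \eqref{Est2}--\eqref{Est3} on $\Phi_\rho$ to produce a covariance bound essentially of size $|r|^{\beta(\alpha-k)}$ up to harmless factors; summability then follows from $\al<k-2/\be$, i.e.\ $\beta(\alpha-k)<-2$. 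The technical crux is bookkeeping the decay exponents so that both hypotheses $p<\be/2$ and $\al<k-2/\be$ appear naturally and simultaneously in the final bound.
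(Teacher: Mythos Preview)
Your overall architecture—$m$-dependent truncation, the $m$-dependent CLT for the truncated part, and $L^2$ control of the residual—is exactly the scheme the paper follows. Two differences deserve comment, one of them substantive.

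The substantive one is your opening step, replacing $X$ by the linear fractional stable motion $Y$ at rate $\sqrt n$. The H\"older bound \eqref{Est2} applies to $\Phi_\rho$, not to $f$; since $f$ may be discontinuous (e.g.\ $f_4=\1_{(-\infty,u]}$), there is no pointwise estimate of $f(b_n\din X)-f(b_n\din Y)$ from an $L^q$ bound on $b_n(\din X-\din Y)$. To invoke $\Phi_\rho$ one must condition on a common independent stable piece of the two increments, but they share the same noise through different kernels, and extracting such a piece with the right rates is essentially the main residual analysis done a second time. The paper avoids this entirely: it uses self-similarity of the driving process $L$ (not of the LFSM) to obtain the \emph{exact} identity $\{n^H\din X\}_i\eqdist\{Y_r^n\}_r$ with the $n$-dependent kernel $\phi_r^n(s)=D^k g_n(r-s)$, $g_n(s)=n^\alpha g(s/n)$, and takes $n\to\infty$ only at the level of real-valued covariances $\theta_j^{n,m}\to\theta_j^{\infty,m}$, which is painless (Lemma~\ref{dsklfhdskfhg}). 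The centering swap $\E[f(\rho_0 S)]\leftrightarrow\E[f(n^H\din X)]$ is handled separately in Remark~\ref{sdlfjslfj}.

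The second difference is methodological. For the residual you propose a direct summability estimate on $\Cov(A_0^{(m)},A_r^{(m)})$. The paper instead writes $S_n-S_{n,m}$ as a double telescoping martingale sum $\sum_r\sum_j\xi^{n,m}_{r,j}$ with $\xi^{n,m}_{r,j}=\E[f(Y_r^n)-f(Y_r^{n,m})\mid\F_{r-j+1}]-\E[\,\cdot\mid\F_{r-j}]$, exploits the orthogonality $\Cov(\xi^{n,m}_{r,j},\xi^{n,m}_{r',j'})=0$ unless $r-j=r'-j'$, and then splits into $j=1$, $2\le j\le m$, and $j>m$. Your direct covariance route should also succeed, and you correctly identify that $p<\beta/2$ (second moment via \eqref{StaVarEst}) and $\beta(\alpha-k)<-2$ (summability via \eqref{scaparest}) are the two drivers; the martingale bookkeeping simply makes these exponents surface more cleanly via Lemma~\ref{DerBou}.
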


\begin{theo}[``Long memory'']\label{the2ndGenVar}
Suppose that Assumption~(A2)  is  satisfied.    

\noindent 
(i) (Appell rank=1). Assume that $m^{\star}_{\rho_0}= 1$  and   Assumption~(B) holds with $p=1$.   For $\beta\in (1,2)$ and  $\alpha\in(k-1,k-1/\beta)$  we have that  
\begin{equation}\label{lsjdfgioskdow}
n^{k-\alpha-1/\beta}\bigg( V(f;k)^n - \E[f(\rho_0 S)]\bigg)\tol S\beta S(\sigma),
\end{equation}
where the scale parameter $\sigma$ is given by \eqref{ljsdfhhjljeri}. 

\noindent
(ii) (Appell rank$>$1). Suppose that Assumption~(B) holds with $p<\beta/2$, and 
$0=\frac{\partial }{\partial x}\Phi_\rho(0)=\frac{\partial^2 }{\partial x\partial \rho} \Phi_\rho(0)$
for all  $\rho\in (0,\infty)$.  For all  $\alpha\in(k-2/\beta,k-1/\beta)$ it holds that
 \begin{equation}\label{slt}
n^{1-\frac{1}{(k-\alpha)\beta}}\bigg(V(f;k)^n - \E[f(\rho_0 S)]\bigg)\tol \mathcal S((k-\alpha)\beta, 0, \rho_1,\eta_1),\qquad 
\end{equation}
 where the right hand side denotes the $(k-\alpha)\beta$-stable distribution with location parameter 0, scale parameter $\rho_1$ and skewness parameter $\eta_1$, which are specified in \eqref{ScaSkeParslt}. 
\end{theo}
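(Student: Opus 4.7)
The overall approach I would take follows the template of \cite{BasLacPod2016} for the power variation case, but replaces $f$ systematically by the centered transform $\Phi_\rho$, so that Assumption~(B) takes over the role played by $|x|^p$-type bounds in that paper. The starting point is the small-jump/big-jump decomposition of the driving L\'evy process: for fixed $m\in\N$, write $L=L^{(m)}+M^{(m)}$, where $M^{(m)}$ collects the jumps of $L$ of size strictly larger than $1/m$ and $L^{(m)}$ is the remainder. This induces $X=X^{(m)}+Y^{(m)}$ and $\Delta_{i,k}^n X=\Delta_{i,k}^n X^{(m)}+\Delta_{i,k}^n Y^{(m)}$. Using Assumption~(A2), the local form $g(t)\sim t^\alpha$ and the scaling $b_n=n^H$, one shows that $b_n\Delta_{i,k}^n X^{(m)}$ is close in distribution to $\rho_{n,m}S$ with $\rho_{n,m}\to\rho_0$ as first $n\to\infty$ and then $m\to\infty$. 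Conditioning on the big-jump filtration, the centered statistic becomes, up to negligible error,
\begin{equation*}
V(f;k)^n-\E[f(\rho_0 S)]\;\approx\;\frac{1}{n}\sum_{i=k}^n \Phi_{\rho_{n,m}}\!\bigl(b_n\Delta_{i,k}^n Y^{(m)}\bigr),
\end{equation*}
reducing the problem to a sum driven by finitely many large jumps.

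For part~(i), since $\Phi'_{\rho_0}(0)\neq 0$ and (B) holds with $p=1$, a first-order Taylor expansion
\begin{equation*}
\Phi_{\rho_{n,m}}(b_n\Delta_{i,k}^n Y^{(m)})\;\approx\;\Phi'_{\rho_{n,m}}(0)\,b_n\Delta_{i,k}^n Y^{(m)}
\end{equation*}
reduces matters to a linear functional of the big jumps. Writing $\Delta_{i,k}^n Y^{(m)}$ explicitly and summing over $i$, each jump $\Delta L_{T_j}$ contributes a term of the form $\Delta L_{T_j}\cdot n^{-1/\beta}\sum_{l\geq 0}h_k(l+U_j)$, once the asymptotics $g^{(k)}(t)\sim\alpha(\alpha-1)\cdots(\alpha-k+1)t^{\alpha-k}$ are used. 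Since $\alpha\in(k-1,k-1/\beta)$, the sequence $(h_k(l))_{l\geq 0}$ lies in $\ell^\beta$, and the classical stable limit theorem for i.i.d.\ $\beta$-stable variables yields the $S\beta S(\sigma)$ limit at rate $n^{k-\alpha-1/\beta}$, with $\sigma$ as in \eqref{ljsdfhhjljeri}. Sending $m\to\infty$ after uniform control of the small-jump error closes the argument.

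For part~(ii), the first-order expansion vanishes because $\partial_x\Phi_\rho(0)=0$, and a refined second-order expansion in both $x$ and $\rho$ is required. The additional assumption $\partial^2_{x\rho}\Phi_\rho(0)=0$ is precisely what is needed to eliminate the cross-term arising from the deviation of $\rho_{n,m}$ from $\rho_0$ paired with $b_n\Delta_{i,k}^n Y^{(m)}$; without this, such a cross-term would dominate the genuinely nonlinear contribution. The surviving leading term is then handled by a point-process argument: each big jump $\Delta L_{T_j}$ generates a cluster of nontrivial contributions $f(b_n\Delta L_{T_j}\,n^{-\alpha}h_k(l+U_j))-\E[f(\rho_0 S)]$ over grid offsets $l$, and the resulting compound-Poisson sum converges under the normalisation $n^{1-1/((k-\alpha)\beta)}$ to a $(k-\alpha)\beta$-stable variable with parameters $(\rho_1,\eta_1)$ given by \eqref{ScaSkeParslt}, the latter being obtained by integrating against the L\'evy measure of $L$ and the distribution of $h_k(l+U)$.

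The principal technical obstacle throughout is the uniform control of remainders in the double limit $n\to\infty$, $m\to\infty$. In part~(i), one must verify that the aggregated quadratic-order remainder is of strictly smaller order than $n^{\alpha+1/\beta-k}$, using the uniform bound \eqref{Est3} jointly with the decay $h_k(l)\sim cl^{\alpha-k}$. In part~(ii), the delicate balance is encoded in the range $\alpha\in(k-2/\beta,k-1/\beta)$: the lower bound guarantees that small-jump CLT-type fluctuations (of order $n^{-1/2}$) are dominated by the heavier big-jump stable fluctuations, and the restriction $p<\beta/2$ in (B) is precisely what makes the integrability of the H\"older bound in \eqref{Est2} against the tails of the $\beta$-stable distribution close up. I expect these estimates to rely heavily on the truncation, coupling and tightness machinery postponed to Section~\ref{lsjdfghsdfoghs}.
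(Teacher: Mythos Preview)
Your approach diverges substantially from the paper's and contains a structural gap. The big-jump/small-jump decomposition you propose is the engine behind Theorem~\ref{th1}(i), where the limit is a random functional of the jumps, but it is \emph{not} how the paper proves Theorem~\ref{the2ndGenVar}. Since $L$ here is exactly symmetric $\beta$-stable, the paper instead exploits self-similarity to pass to the stationary array $Y^n_r=\int \phi_r^n\,dL$ and works entirely with conditional expectations on \emph{time-interval} $\sigma$-algebras $\G_s$ and $\G^1_s$, not with a truncation by jump size. Your conditioning step ``$V(f;k)^n-\E[f(\rho_0 S)]\approx \tfrac1n\sum_i \Phi_{\rho_{n,m}}(b_n\Delta_{i,k}^n Y^{(m)})$'' is already problematic: the small-jump remainder $L^{(m)}$ is a truncated L\'evy process, not $\beta$-stable, so integrating it out does not produce $\Phi_\rho$ in the sense of \eqref{Phi}; you would have to control the discrepancy at the sharp rates $n^{k-\alpha-1/\beta}$ or $n^{1-1/((k-\alpha)\beta)}$, which is not addressed.

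For part~(i), your linearisation instinct is right---the paper also reduces to the linear statistic $\tilde S_n=\Phi'_{\rho^n}(0)\sum_r Y^n_r$---but the mechanism that identifies the $S\beta S$ limit is not a jump-by-jump point-process argument. It is the telescoping identity $\sum_{r=k}^n \Delta^n_{r,k} X=\Delta^n_{n,k-1}X-\Delta^n_{k-1,k-1}X$, after which the limit scale parameter drops out of a direct $L^\beta$-computation. The hard work is proving $S_n-\tilde S_n$ is negligible, which the paper does via a double telescoping over $\G_{r-j}$ and $\G_{r-j}^1\vee\G_{r-l}$ and the von Bahr--Esseen inequality; none of this machinery is a small-jump error bound.

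For part~(ii), your compound-Poisson picture misses the actual mechanism. The paper constructs i.i.d.\ random variables $Z_r=\sum_j\{\Phi_{\rho_j^\infty}(U^\infty_{j+r,r})-\E[\cdot]\}$, where $U^\infty_{j+r,r}$ is the integral of $h_k$ over the unit interval $[r,r+1]$, and shows that $Z_k$ lies in the domain of attraction of a $(k-\alpha)\beta$-stable law. The heavy tail arises because the aggregated function $\bar\Phi(x)=\sum_j \Phi_{\rho_j^\infty}(\phi_j^\infty(0)x)$ satisfies $\bar\Phi(x)\sim \kappa_\pm |x|^{1/(k-\alpha)}$ as $x\to\pm\infty$, so that $\bar\Phi(L_{k+1}-L_k)$ inherits a tail $\sim x^{-(k-\alpha)\beta}$ from the $\beta$-stable tail of $L$. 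This is a single-increment tail calculation followed by classical domain-of-attraction theory, not a cluster-of-jumps limit, and the formula you wrote for the per-jump contribution, $f(b_n\Delta L_{T_j}\,n^{-\alpha}h_k(l+U_j))$, does not even have a finite limit since $b_n n^{-\alpha}=n^{1/\beta}\to\infty$.
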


\begin{remark} \rm \label{cltremarks} 

\begin{enumerate}[wide, labelwidth=!, labelindent=0pt, label=(\roman*)]
\item We note that the limiting distribution in Theorem~2.6(i)  is only non-degenerate in the Appell rank one case, or more precisely when $\frac{\partial }{\partial x}\Phi_{\rho_0}(0)\neq 0$, which follows from  \eqref{ljsdfhhjljeri}.
\item We also remark that 
the condition $m^{\star}_\rho \geq 2$  in Theorem~2.6(ii) is required to hold for \textit{all} $\rho>0$, which is in strong contrast
to the discrete framework of e.g.\ \cite{Sur2004} where only assumptions on  $m^{\star}_{\rho_0}$ are made.  The reason for our stronger condition on the Appell rank is the fact that the scaled increments $n^H \Delta_{i,k}^n X$ are only asymptotically S$\beta$S($\rho_0$)-distributed. 
\item Theorems \ref{lsjdlfjssss} and \ref{the2ndGenVar}(ii) give a rather complete picture of possible limits when the Appell rank is strictly large than one. Indeed, we cover all cases $\alpha \in (0, k-1/\beta)$ except the critical value of $\alpha=k-2/\beta$. This is not the case for the setting of Appell rank one. Not only we need to assume that $\beta \in (1,2)$, but we also have that 
$k-2/\beta < k-1$. Hence, the limit theory in the framework of $\beta \in (0,1]$, and also $\beta \in (1,2)$ with $\alpha \in [k-2/\beta, k-1]$, is still an open problem. 
\item Notice that Theorem \ref{lsjdlfjssss}, which has the fastest rate of convergence, never holds for $k=1$ since $\beta \in (0,2)$. 
Hence, for the purpose of statistical estimation, it makes sense to use higher values of $k$ to end up in the setting of Theorem \ref{lsjdlfjssss}. We refer  to \cite{MazOtrPod2017} for more details on statistical applications using higher order increments. \qed
\end{enumerate}
\end{remark}

 Similarly to Proposition \ref{prop1} one might be able to prove the functional versions of 
 Theorems~\ref{lsjdlfjssss} and \ref{the2ndGenVar}. 
However, we dispense with the precise exposition of these results in this paper.

\subsection{Outline of the proofs of Theorems~\ref{lsjdlfjssss} and \ref{the2ndGenVar}}

The strategy of the three proofs Theorems~\ref{lsjdlfjssss}, \ref{the2ndGenVar}(i) and \ref{the2ndGenVar}(ii) are  quite different, and are briefly outlined in the following. 

\begin{itemize}
\item 
For the proof of Theorem~\ref{lsjdlfjssss} we approximate $V(f;k)^n$ by 
\begin{align}\label{slfjsldhfdgd-sdfsd}
  V_{n,m}={}& \sum_{r=k}^n \big(f(n^{H} \din  X^m)-\E[f(n^{H} \din  X^m)]\big),\quad\text{where}\\ \quad X^{m}_t={}& 
   \int_{t-m/n}^t \big\{g(t-s)-g_0(-s)\big\}\, dL_s,\end{align}
More precisely, the main part of the proof is to show
\[\lim_{m\to\infty} \limsup_{n\to\infty}\E[n^{-1}(V(f;k)^n-V_{n,m})^2] =0.\]
It is then sufficient to establish asymptotic normality of $(V_{n,m})_{n\in\N}$ for each $m\geq 1$, which follows by the central limit theorem for $m$-dependent sequences of random variables. This general approach to deriving central limit theorems is popular in the literature, see \cite{PipTaq2003} for an example.
\item The main idea of the proof of Theorem~\ref{the2ndGenVar}(i) is to approximate $V(f;k)^n$, in a suitable sense,  by  a  linear functional $V_n$ of $(n^{H} \din X)_{i=k}^n$ given by 
\begin{equation}
V_n  = c_n \sum_{i=k}^n n^{H} \din X,\qquad n\in \N,
\end{equation}
where $c_n$ are certain chosen constants. 
With such an approximation in hand,  the proof boils down to showing that the $S\beta S$-stable random variables 
$V_n$ converge in distribution.

\item For the proof of Theorem~\ref{the2ndGenVar}(ii)  we decompose $V(f;k)^n$ as 
\begin{equation}\label{dslfjskgefe}
V(f;k)^n = \sum_{r=k}^n K_r+\sum_{r=k}^n Z_r
\end{equation}
where $\{Z_r\}_{k\geq n}$ is suitable defined i.i.d.\ sequence of random variables to be defined in \eqref{ZDef} below. 
We argue that the first  sum, on the right-hand side of \eqref{dslfjskgefe},  is asymptotically negligible and that the random variables $Z_r$ are in the domain of attraction of a $(k-\alpha)\beta$-stable random variable with location parameter 0, scale parameter $\rho_1$ and skewness parameter $\eta_1$ as defined in \eqref{ScaSkeParslt} in the proof.
Similar decompositions have been applied to derive stable limit theorems for discrete time moving averages, see for example \cite{HoHsi1997}.
\end{itemize}

\section{Preliminaries}
\label{sec2a}
\setcounter{equation}{0}
\renewcommand{\theequation}{\thesection.\arabic{equation}}

Throughout all our proofs we denote by $C$ a generic positive constant that does not depend on $n$ or $\omega$, but may change from line to line. 
For a random variable $Y$ and $q>0$ we denote $\|Y\|_q=\E[|Y|^q]^{1/q}.$ Throughout this paper we will repeatedly use the fact that if   $L$ is a symmetric $\beta$-stable L\'evy process with scale parameter $\rho_L$, then for each  function $\psi\in L^\beta(ds)$  the integral $\int_\R \psi(s)\,dL_s$ is a symmetric $\beta$-stable random variable  with scale parameter
\begin{equation}\label{ScaParInt}
\rho_L\bigg(\int_\R |\psi(s)|^\beta \,ds\bigg)^{1/\beta}=\rho_L\|\psi\|_{L^\beta(\R)}, 
\end{equation}
see \cite[Proposition 3.4.1]{SamTaq1994}. 
We will also frequently use the notation
\begin{align} \label{gindef}
g_{i,n}(s):= \sum_{j=0}^k (-1)^{j}\binom{k}{j}g((i-j)/n-s),
\end{align}
which leads to the expression
\[\din X=\int_{-\infty}^{i/n} g_{i,n}(s)\diff L_s\]
for the the $k$th order increments of $X$.
For the functions $g_{i,n}$ we obtain the following  estimates.
\begin{lem} \label{g_{i,n}Est}
Suppose that Assumption (A) is satisfied. It holds that
\begin{align*}
|g_{i,n}(s)|&\leq C(i/n-s)^\alpha\hspace{7.25em}\text{for }s\in[(i-k-1)/n,i/n],\\
|g_{i,n}(s)|&\leq Cn^{-k}((i-k)/n-s)^{\alpha-k}\qquad\text{for }s\in(i/n-\delta,(i-k-1)/n),\text{ and}\\
|g_{i,n}(s)|&\leq Cn^{-k}\big(\mathds 1_{[(i-k)/n-\delta,i/n-\delta]}(s)+g^{(k)}((i-k)/n-s)\mathds 1_{(-\infty,(i-k)/n-\delta)}(s)\big),\\
&\hspace{13.5em}\text{for }s\in(-\infty, i/n-\delta].
\end{align*}
\end{lem}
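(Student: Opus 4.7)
The strategy rests on recognizing $g_{i,n}(s)$ as a $k$th forward finite difference of $g$ and applying the mean value theorem, except in the first range where the evaluation interval passes through zero and $g^{(k)}$ is unavailable. Reindexing by $j'=k-j$ rewrites
\[
g_{i,n}(s) = \sum_{j'=0}^k (-1)^{k-j'}\binom{k}{j'} g\bigl(y + j'/n\bigr),\qquad y:=(i-k)/n - s,
\]
which is the standard $k$th forward difference $\Delta_{1/n}^k g(y)$ with step $1/n$. Whenever $g \in C^k$ on $[y,y+k/n]$, the mean value theorem for finite differences yields
\[
g_{i,n}(s) = n^{-k}\, g^{(k)}(\xi),\qquad \xi \in \bigl((i-k)/n-s,\ i/n-s\bigr),
\]
which will be the workhorse identity for the second and third bounds.

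For the first bound, the interval $[y,y+k/n]$ may contain non-positive values, so the identity above is unavailable. Instead, the hypothesis $g(t)\sim t^\alpha$ at $0$ together with continuity of $g$ on $[0,\infty)$ (and vanishing on $(-\infty,0)$) yields a constant $C$ with $|g(t)|\leq Ct^\alpha$ on $[0,k+1]$. Since $((i-j)/n-s)_+\leq i/n-s\leq (k+1)/n\leq k+1$ for $s\in[(i-k-1)/n,\,i/n]$ and $j=0,\dots,k$, bounding each summand by $C(i/n-s)^\alpha$ and summing gives the claim. For the second bound, $s\in(i/n-\delta,(i-k-1)/n)$ forces $(i-k)/n-s>1/n$ and $i/n-s<\delta$, so $\xi\in(0,\delta)$ and Assumption~(A) gives $|g^{(k)}(\xi)|\leq C\xi^{\alpha-k}$. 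If $\alpha\leq k$, $\xi^{\alpha-k}$ is decreasing and $\xi\geq (i-k)/n-s$ produces the bound at once; if $\alpha>k$, I use $1/n\leq (i-k)/n-s$ to write $\xi\leq i/n-s=(i-k)/n-s+k/n\leq (k+1)((i-k)/n-s)$, whence $\xi^{\alpha-k}\leq (k+1)^{\alpha-k}((i-k)/n-s)^{\alpha-k}$.

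For the third bound the finite difference identity applies on both subranges (the interval $((i-k)/n-s,\,i/n-s)$ is contained in $(0,\infty)$ for $n$ large enough). When $s\in[(i-k)/n-\delta,\,i/n-\delta]$, eventually $(i-k)/n-s\geq \delta/2$, so $\xi$ lies in a compact subinterval of $(0,\infty)$ on which $|g^{(k)}|$ is bounded (combining the estimate $C\xi^{\alpha-k}$ for $\xi<\delta$ with continuity and the monotonicity of $|g^{(k)}|$ past $\delta$), giving $|g_{i,n}(s)|\leq Cn^{-k}$. When $s<(i-k)/n-\delta$, $\xi>\delta$ and Assumption~(A) asserts that $|g^{(k)}|$ is decreasing on $(\delta,\infty)$, so $|g^{(k)}(\xi)|\leq |g^{(k)}((i-k)/n-s)|$, which is the third bound. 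The one delicate step is the subcase $\alpha>k$ in the second bound, requiring the geometric inequality $\xi\leq (k+1)((i-k)/n-s)$; otherwise the proof is careful bookkeeping of intervals to ensure the finite difference formula is applicable.
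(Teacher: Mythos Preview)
Your argument is correct and follows essentially the same approach as the paper's (very terse) proof: the first bound comes directly from $|g(t)|\leq Ct^\alpha$ near $0$, while the second and third follow from the finite-difference mean value identity $g_{i,n}(s)=n^{-k}g^{(k)}(\xi)$ combined with the respective hypotheses $|g^{(k)}(t)|\leq Ct^{\alpha-k}$ on $(0,\delta)$ and monotonicity of $|g^{(k)}|$ on $(\delta,\infty)$. Your additional care in splitting the second bound into the cases $\alpha\leq k$ and $\alpha>k$ is a refinement the paper does not spell out (and in the paper's applications of this lemma only $\alpha<k$ is ever used), but it makes the statement self-contained under Assumption~(A) alone.
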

\begin{proof}
The  first inequality  follows directly from Assumption (A). The second inequality
is a straightforward consequence of Taylor expansion of order $k$ and the condition
$|g^{(k)}(t)|\leq C t^{\alpha-k}$ for $t\in (0,\delta)$. The third inequality  follows again through Taylor expansion and the fact that the function $g^{(k)}$ is decreasing on $(\delta, \infty)$.
\end{proof}
We  briefly recall the definition and some properties of the Skorokhod $M_1$-topology, as it is not as widely used as the $J_1$-topology.  
It was originally introduced by Skorokhod \cite{Sko1956} by defining a metric on the completed graphs of c\`adl\`ag functions, where the completed graph of $\phi$ is defined as 
\[\Gamma_\phi =\{(x,t)\in \R\times\R_+\,:\, x=\alpha \phi(t-)+(1-\alpha)\phi(t),\text{ for some }\alpha\in[0,1]\}.\]
The $M_1$-topology is weaker than the  $J_1$-topology but still strong enough to make many important functionals, such as supremum and infimum, continuous. It can be shown that the stable convergence in Theorem \ref{th1}(i) does not hold with respect to the $J_1$-topology (cf.\ \cite{BasHeiPod2017}). Since the $M_1$-topology is metrizable, it is completely characterized through  convergence  of sequences, which we describe  in the following.
A sequence $\phi_n$ of functions in $\mathbb D(\R_+,\R)$ converges to $\phi\in \mathbb D(\R_+,\R)$ with respect to the Skorokhod $M_1$-topology if and only if $\phi_n(t)\to \phi(t)$ for all $t$ in a dense subset of $[0,\infty)$, and for all $t_\infty\in[0,\infty)$ it holds that
\[\lim_{\delta_\downarrow 0}\limsup_{n\to\infty} \sup_{0\leq t\leq t_\infty} w(\phi_n,t,\delta) =0.\]
Here, the oscillation function $w$ is defined as
\begin{align}\label{oscfun}
w(\phi,t,\delta)=\sup_{0\vee(t-\delta)\leq t_1<t_2<t_3\leq (t+\delta)\wedge t_\infty}\{|\phi(t_2)-[\phi(t_1),\phi(t_3)]|\},
\end{align}
where for $b<a$ the interval $[a,b]$ is defined to be $[b,a], $ and $|a-[b,c]|:=\inf_{d\in[b,c]}|a-d|.$ We refer to \cite{Whi2002}
for more details on the $M_1$-topology.


\section{Proof of Theorem \ref{th1} }
\label{secPro1}
\setcounter{equation}{0}
\renewcommand{\theequation}{\thesection.\arabic{equation}}



\subsection{Proofs of Theorem~\ref{th1}(i) and Proposition~\ref{prop1}(i)}


We concentrate on the proof of Proposition \ref{prop1}(i), since it is a stronger statement than 
Theorem \ref{th1}(i). 
The proof is divided into three parts. First, we assume that $L$ is a compound Poisson process and show the finite dimensional stable convergence for the statistic $V(f;k)_t^n$.
Thereafter we argue that the convergence holds in the functional sense with respect to the $M_1$-topology, when $f$ satisfies condition \ref{con3}.
 Finally, the results are extended to general L\'evy processes by truncation. For this step, an isometry for L\'evy integrals, which is due to \cite{RajRos1989}, plays a key role.

Since $C^q(\R)\subset  C^p(\R)$ for $p<q$ we may and do assume that $p\not\in\N$. Note that, if  
$f\in C^p(\R)$ and $f^{(j)}(0)=0$ for all $j=0,\dots, [p]$, then for any $N>0$ there exists a constant $C_N$ such that
\begin{align}\label{fEstStaCon}
|f^{(j)}(x)|\leq C_N|x|^{p-j},\quad \text{ for all }x\in[-N,N], \text{ and }j=0,\dots [p].
\end{align}
By the assumption $p>\frac 1 {k-\alpha}$, this implies the following estimate to be used in the proof below.
For all $N>0$ there is a constant $C_N$ such that
\begin{align}\label{fEstStaCon2}
|f^{(j)}(x)|\leq C_N|x|^{\gamma_j},\quad \text{ for all }x\in[-N,N], \text{ and }j=0,\dots ,[p],
\end{align}
where $\gamma_j= \frac{p-j}{p(k-\alpha)}.$
The following lemma ensures in particular that the limit in Theorem~\ref{th1}(i) exists.
\begin{lem}\label{LimEx} Let $t>0$ be fixed. Under conditions of Theorem \ref{th1}(i) there exists a finite random variable $K>0$ such that
\begin{align}\label{lsjdfljsdhsdlh}
\sum_{m:\,T_m\in [0,t]} \sum_{l=0}^\infty \big|f\big( \Delta L_{T_m} h_k(l+U_m)\big)\big|&\leq K,\quad\text{and}\\
\label{lsjdsfgsdfghsdfsde}\sum_{m:\,T_m\in [0,t]} \sum_{l=0}^{n-1} \big|f\big( \Delta L_{T_m} n^\alpha g_{i_m+l,n}(T_m)\big)\big|&\leq K,\quad\text{for all $n$,}
\end{align}
where $i_m$ denotes the random index such that $T_m\in\big(\frac{i_m-1}n,\frac{i_m}n\big].$
\end{lem}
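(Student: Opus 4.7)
The plan is to control both sums by a common scheme. By \eqref{fEstStaCon} (which uses $f\in C^{p}(\R)$ and $f^{(j)}(0)=0$ for $j\le[p]$) one has $|f(x)|\le C_{N}|x|^{p}$ on $[-N,N]$, and the hypotheses $p>\beta\vee(k-\alpha)^{-1}$ give two key inputs: $p(k-\alpha)>1$ for the summability in the shift index $l$, and $p>\beta$ for the jump summability $\sum_{T_{m}\in[0,t],\,|\Delta L_{T_{m}}|\le 1}|\Delta L_{T_{m}}|^{p}<\infty$ a.s.\ via Blumenthal--Getoor. In both bounds I would split the jumps on $[0,t]$ into the a.s.\ finitely many ``large'' ones $(|\Delta L_{T_{m}}|>1)$, which contribute a finite random amount by brute force, and the ``small'' ones, for which the arguments stay in a fixed compact set and the uniform polynomial estimate applies.

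For \eqref{lsjdfljsdhsdlh}, $h_{k}$ is continuous and bounded on $\R$, and for $x>k$ it equals the $k$-th backward difference of $y\mapsto y^{\alpha}$, so Taylor's theorem gives $|h_{k}(x)|\le Cx^{\alpha-k}$ as $x\to\infty$. Therefore $p(k-\alpha)>1$ yields $\sum_{l\ge 0}|h_{k}(l+U_{m})|^{p}<\infty$ a.s., whence $\sum_{l}|f(\Delta L_{T_{m}}h_{k}(l+U_{m}))|\le C_{|\Delta L_{T_{m}}|}|\Delta L_{T_{m}}|^{p}$; summation over jumps completes the bound.

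For \eqref{lsjdsfgsdfghsdfsde} the same template applies with $H_{n,l}^{m}:=n^{\alpha}g_{i_{m}+l,n}(T_{m})$, using Lemma \ref{g_{i,n}Est} at $s=T_{m}$, $i=i_{m}+l$. Setting $u_{m}:=i_{m}-nT_{m}\in[0,1)$ so that $(i_{m}+l)/n-T_{m}=(l+u_{m})/n$, the range $l\in\{0,\dots,n-1\}$ splits into four regions: (i) $l\le k$ with $|H_{n,l}^{m}|\le C(l+u_{m})^{\alpha}$; (ii) $k<l\le\lfloor n\delta\rfloor$ with $|H_{n,l}^{m}|\le C(l-k+u_{m})^{\alpha-k}$, producing the same summable tail $\sum(l-k)^{(\alpha-k)p}<\infty$ as for $h_{k}$; (iii) a transition set of at most $k+1$ indices with $|H_{n,l}^{m}|\le Cn^{\alpha-k}$; and (iv) $l>\lfloor n\delta+k\rfloor$, with $|H_{n,l}^{m}|\le Cn^{\alpha-k}g^{(k)}((l-k+u_{m})/n)$. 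Regions (i)--(iii) mirror the proof of \eqref{lsjdfljsdhsdlh}.

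The main obstacle is region (iv), where one needs to bound $n^{(\alpha-k)p}\sum_{l}g^{(k)}((l-k+u_{m})/n)^{p}$. Since $g^{(k)}$ is decreasing on $(\delta,\infty)$, this sum is Riemann-comparable to $n\int_{\delta}^{\infty}g^{(k)}(u)^{p}\,du$. When $p\ge\theta$, boundedness of $g^{(k)}$ on $(\delta,\infty)$ gives $|g^{(k)}|^{p}\le C|g^{(k)}|^{\theta}$ there, so the integral is finite and the region-(iv) contribution is $O(n^{1+(\alpha-k)p})=o(1)$ via $p(k-\alpha)>1$. The critical case $\theta=1$ is where this estimate becomes borderline, and is exactly what Assumption (A-log) is designed to handle: the extra $\log(1/|g^{(k)}|)$ weight supplies the required integrability via a dyadic decomposition on the level sets of $g^{(k)}$. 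Summing over jumps as above then yields the uniform-in-$n$ a.s.\ finite bound $K$.
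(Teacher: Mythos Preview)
Your overall approach matches the paper's: bound $|f(x)|\le C_N|x|^{p}$ locally, then exploit $(l-k)^{\alpha-k}$ decay and $p(k-\alpha)>1$, $p>\beta$. Your proof of \eqref{lsjdfljsdhsdlh} is correct. The paper streamlines things slightly by not splitting into large and small jumps: since $\sup_{s\in[0,t]}|\Delta L_s|<\infty$ a.s., one can take a single \emph{random} $N$ and apply \eqref{fEstStaCon} uniformly.

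There is, however, a real slip in your region~(iv) analysis for \eqref{lsjdsfgsdfghsdfsde}. You compare the sum to $n\int_\delta^\infty |g^{(k)}(u)|^p\,du$ and then worry about whether $\int_\delta^\infty |g^{(k)}|^p<\infty$, invoking $p\ge\theta$ and eventually (A-log). But for $l\le n-1$ the points $(l-k+u_m)/n$ never exceed~$1$, so the correct comparison is to $\int_\delta^{1}|g^{(k)}|^p$, which is trivially finite because $g^{(k)}$ is continuous on $[\delta,1]$. Equivalently, $|g^{(k)}|$ is bounded on $[\delta,1]$ by some constant $M$, so region~(iv) contributes at most $C|\Delta L_{T_m}|^p\, n^{(\alpha-k)p}\cdot nM^p=O(n^{1+(\alpha-k)p})=o(1)$ with no case distinction on $p$ versus $\theta$. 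Assumption~(A-log) is \emph{not} used in this lemma at all; it enters elsewhere in the proof of Theorem~\ref{th1}(i), namely in the bound \eqref{Rbound} on $R_{i,n,\eps}$ (via \cite[Eqs.~(4.8), (4.12)]{BasLacPod2016}).

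The paper in fact avoids your four-region split entirely. It shows the single estimate $n^\alpha|g_{i_m+l,n}(T_m)|\le C(l-k)^{\alpha-k}$ for every $l\in\{k+1,\dots,n-1\}$: on $\{k+1,\dots,\lfloor n\delta\rfloor\}$ this is Lemma~\ref{g_{i,n}Est} directly, while for larger $l$ one uses that $(l-k+u_m)/n\in(\delta,1]$ together with $|g^{(k)}(x)|\le Cx^{\alpha-k}$ on $[\delta,1]$ (trivial, since $|g^{(k)}|$ is bounded there and $x^{\alpha-k}\ge 1$). With that bound in hand the second inequality follows exactly as the first.
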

\begin{proof}
Throughout the proof, $K$ denotes a positive random variable that does not depend on $n$, but may change from line to line.   For the first inequality note that $|h_k(l+U_m)|\leq C(l-k)^{\alpha-k}$ for all $l>k$ and $|h_k(l+U_m)|\leq C$ for $l\in\{0,...,k\}$. This implies in particular 
\begin{align*}
| \Delta L_{T_m}(\omega) h_k(l+U_m)|\leq 
\begin{cases} C(l-k)^{\alpha-k}\sup_{s\in[0,t]}|\Delta L_s| ,&\text{for }l>k\\
 C\sup_{s\in[0,t]}|\Delta L_s|,&\text{for }l\in\{0,...,k\}.
 \end{cases}
\end{align*}
 Therefore, we find by \eqref{fEstStaCon} a random variable $K$ such that 
\[\big|f\big( \Delta L_{T_m} h_k(l+U_m)\big)\big|\leq K\big| \Delta L_{T_m} h_k(l+U_m)\big|^p\]
for all $l\geq 0$ and all $m.$
Consequently, the left-hand side of \eqref{lsjdfljsdhsdlh}  is dominated by
\[K\bigg(\sum_{m:\,T_m\in [0,t]}|\Delta L_{T_m}|^p+\sum_{m:\,T_m\in [0,t]}|\Delta L_{T_m}|^p\sum_{l=k+1}^\infty(l-k)^{(\alpha-k)p}\bigg)\leq K,\]
where we used that $(\alpha-k)p<-1,$ and that $\sum_{m:\, T_m \in [0,t]} |\Delta L_{T_m}|^p<\infty$ since $p>\beta$. The  inequality \eqref{lsjdsfgsdfghsdfsde} follows by the same arguments since Lemma \ref{g_{i,n}Est} implies the existence of a constant $C>0$ such that for all $n\in\N$
\begin{align*}
&n^{\alpha }g_{i_m+l,n}(T_m)\leq C &&\text{ for }l\in\{0,...,k\},\text{ and }\\
&n^{\alpha }g_{i_m+l,n}(T_m)\leq C(l-k)^{\alpha-k},&&\text{ for }l\in\{k+1,...,n-1\} .
\end{align*}
\end{proof}


\subsubsection{Compound Poisson process as driving process}


In this subsection, we show the finite dimensional stable convergence of $V(f;k)_{t}^n$  under the assumption that $L$ is a compound Poisson process. The extension to functional convergence when condition \ref{con3} is satisfied
follows in the next subsection, the extension to general $L$ thereafter.

 Let $0\leq T_1<T_2<...$ denote the jump times of $(L_t)_{t\geq 0}$.
For $\eps>0$ we define
\begin{align*}
 \Omega_\eps=\big\{\omega \in\Omega : &\text{ for all $m$ with $T_m(\omega)\in[0,t]$ we have $|T_m(\omega)-T_{m-1}(\omega)|>\eps$}\\
&\text{ and $\Delta L_s(\omega)=0$ for all $s\in [-\eps,0]$ and $|\Delta L_s(\omega)|\leq \eps^{-1}$ for all $s\in[0,t]$}\big\}.
\end{align*}
We note that $\Omega_\eps\uparrow \Omega,$ as $\eps\downarrow 0.$
Letting
\begin{align}\label{MRdef}
M_{i,n,\eps}:=\int_{i/n-\eps}^{i/n} g_{i,n}(s)\diff L_s,\quad\text{and}\quad R_{i,n,\eps}:=\int^{i/n-\eps}_\infty g_{i,n}(s)\diff L_s,
\end{align}
we have the decomposition $\din X=M_{i,n,\eps}+R_{i,n,\eps}.$ It turns out that $M_{i,n,\eps}$ is the asymptotically dominating term, whereas $R_{i,n,\eps}$ is negligible as $n\to\infty.$
We show that, on $\Omega_\eps$,
\begin{align}\label{MLim}
\sum_{i=k}^{[nt]} f(n^{\alpha }M_{i,n,\eps})\fidi Z_{t},\quad\text{where}\quad Z_{t}:=\sum_{m:\,T_m\in [0,t]}\sum_{l=0}^\infty f(\Delta L_{T_m}h_k(l+U_m) ),
\end{align}
as $n \to \infty$. 
Here $(U_m)_{m\geq 1}$ are independent identically $\mathcal U([0,1])$-distributed random variables, defined on an extension $(\Omega',\mathcal F',\P')$ of the original probability space, that are independent of $\mathcal F$. 
For this step, the following expression for the left hand side is instrumental.
On $\Omega_\eps$ it holds that
\begin{align}\label{sumM=V}
\sum_{i=k}^{[nt]} f(n^\alpha M_{i,n,\eps})=V^{n,\eps}_t,
\end{align}
where
\begin{align}\label{V^nDef}
V^{n,\eps}_t:=  \sum_{m:T_m\in (0,[nt]/n]}\sum_{l=0}^{v_t^m}f(n^{\alpha}\Delta L_{T_m}g_{i_m +l,n}(T_m)).
\end{align}
Here, $i_m$ denotes the random index such that $T_m\in((i_m-1)/n,i_m/n]$, and $v^m_{t}$ is defined as
\begin{align}\label{v_t^mdef}
 v^m_{t }= v_t^m(\eps,n):=
\begin{cases} 
[\eps n]\wedge ([nt]-i_m) & \text{if } T_m-([\eps n]+i_m)/n>-\eps, \\
[\eps n]-1 \wedge ([nt]-i_m) & \text{if }T_m-([\eps n]+i_m)/n\leq-\eps.
\end{cases}
\end{align}
Additionally, we set $v^m_{t }=\infty$ if $T_m>[nt]/n.$ The following lemma proves \eqref{MLim}.\begin{lem}\label{fidi}
For $r\geq 1$ and $0\leq t_1<\dots< t_r\leq t$ we obtain on $\Omega_\eps$ the stable convergence
\[(V^{n,\eps}_{t_1},\dots, V^{n,\eps}_{t_r})\tols (Z_{t_1},\dots, Z_{t_r}),\quad \text{ as }n\to\infty.\]
\end{lem}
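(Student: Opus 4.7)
The plan is to exploit that on $\Omega_\eps$, for $n$ large enough so that $1/n<\eps$, each ``jump bin'' $((i_m-1)/n,i_m/n]$ of a jump $T_m\in[0,t]$ of $L$ contains no other jump of $L$. Hence the identity \eqref{sumM=V} already displays $V^{n,\eps}_t$ as a \emph{finite} sum over jumps $T_m$ with an inner sum over lags $l=0,1,\ldots,v_t^m$, and the proof reduces to: (a) a pointwise analysis of each summand, (b) handling the sub-$1/n$ fractional-part randomness, and (c) controlling the infinite $l$-sum uniformly in $n$.

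Writing $T_m=(i_m-W^n_m)/n$ with $W^n_m\in[0,1)$ and expanding
\[
g_{i_m+l,n}(T_m)=\sum_{j=0}^k(-1)^j\binom{k}{j} g\Big(\frac{l-j+W^n_m}{n}\Big),
\]
Assumption~(A) (specifically $g(t)\sim t^\alpha$ as $t\downarrow 0$) gives the pointwise deterministic limit $n^\alpha g_{i_m+l,n}(T_m)\to h_k(l+U_m)$ whenever $W^n_m\to U_m\in[0,1)$. The required joint stable convergence $(W^n_m)_{m:T_m\in[0,t]}\tols(U_m)$, with $U_m$ i.i.d.\ $\mathcal U(0,1)$ and independent of $\mathcal F$, is an Aldous/R\'enyi-type equidistribution statement: conditional on jump counts and orders the jump times of a compound Poisson process have continuous densities, so $nT_m\bmod 1$ is asymptotically uniform and asymptotically independent across $m$, and the sub-$1/n$-scale behaviour becomes asymptotically independent of the $\mathcal F$-measurable macroscopic information. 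Combining this with continuity of $f$ yields, jointly over the finitely many $(m,l)$ with $l$ bounded, $f(n^\alpha\Delta L_{T_m} g_{i_m+l,n}(T_m))\tols f(\Delta L_{T_m}h_k(l+U_m))$.

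To exchange the limit with the infinite $l$-sum we construct a summable $n$-uniform majorant. Lemma~\ref{g_{i,n}Est} gives $|n^\alpha g_{i_m+l,n}(T_m)|\le C(l-k)^{\alpha-k}$ for $l>k$, and \eqref{fEstStaCon} gives $|f(x)|\le C_N|x|^p$ on any compact neighbourhood of $0$ (which contains all relevant arguments on $\Omega_\eps$). Together these dominate the summand by $C|\Delta L_{T_m}|^p(l-k)^{(\alpha-k)p}$, which is summable since $(\alpha-k)p<-1$. Dominated convergence then interchanges the limit with both the $l$- and $m$-sums. For the multi-time statement, the finitely many random data $(T_m,\Delta L_{T_m},W^n_m)$ together with the truncations $v_{t_j}^m$ depending continuously (in a trivial, piecewise-constant sense) on each $t_j$ assemble into joint stable convergence of $(V^{n,\eps}_{t_1},\ldots,V^{n,\eps}_{t_r})$. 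The main obstacle is the \emph{stable}, as opposed to merely distributional, convergence of the fractional parts $W^n_m$ to an $\mathcal F$-independent limit: each $W^n_m$ is itself $\mathcal F$-measurable, so producing independence in the limit requires the genuine equidistribution of sub-$1/n$-scale jump positions, which is the most delicate step of the argument.
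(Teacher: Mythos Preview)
Your proposal is correct and follows essentially the same route as the paper. The paper organises the argument via an explicit double truncation at level $d$ (in both $m$ and $l$) and then invokes Billingsley's approximation theorem, whereas you phrase the same tail control as a dominated-convergence step using the summable majorant from Lemma~\ref{g_{i,n}Est} and \eqref{fEstStaCon}; these are equivalent. The paper also imports the stable convergence $\{n^\alpha g_{i_m+l,n}(T_m)\}_{l,m\leq d}\tols\{h_k(l+U_m)\}_{l,m\leq d}$ directly from \cite[Section~5.1]{BasLacPod2016}, which is precisely the fractional-part equidistribution statement you identify as the delicate step. One small point: your remark that the truncations $v_{t_j}^m$ ``depend continuously on $t_j$'' is not quite the right description; what actually matters is that $v_{t_j}^m\to\infty$ a.s.\ as $n\to\infty$ and that the indicators $\1_{\{T_m\in(0,[nt_j]/n]\}}$ eventually agree with $\1_{\{T_m\in(0,t_j]\}}$, both of which feed into the tail estimate you already have.
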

\begin{proof}
 By arguing as in \cite[Section 5.1]{BasLacPod2016}, we deduce for any $d\geq 1$ the $\mathcal F$-stable convergence
\[\{n^\alpha g_{i_m+l,n}(T_m)\}_{l,m\leq d}\tols \{ h_k(l+U_m)\}_{l,m\leq d}\]
as $n\to\infty$.
Defining
\begin{align*}
V_t^{n,d}&:= \sum_{m\leq d:\,T_m\in (0,[nt]/n]}\sum_{l=0}^{d} f(n^\alpha \Delta L_{T_m}g_{i_m +l,n}(T_m))\quad\text{and}\\
Z^d_t& := \sum_{m\leq d:\,T_m\in (0,t]}\sum_{l=0}^{d} f(\Delta L_{T_m}h_k(l+U_m)),
\end{align*}
we obtain by the continuous mapping theorem for stable convergence 
\begin{align}\label{fidiconsta}
(V^{n,d}_{t_1},\dots, V^{n,d}_{t_r})\tols (Z^d_{t_1},\dots, Z^d_{t_r}),\quad \text{ as }n\to\infty,
\end{align}
for all $d\geq 1.$ Therefore, by a standard approximation argument (cf.\ \cite[Theorem~3.2]{Bil1999}), it is sufficient to show that
\begin{align} 
\label{V^nvsV^nd}
&\limsup_{n\to\infty} \bigg\{\max_{t\in\{t_1,\dots,t_r\}}|V_t^{n,\eps}-V_t^{n,d}|\bigg\}\toas 0,&&\text{ as }d\to\infty,\quad\text{and}
\\ \label{Zcon1}
&\sup_{s\in[0,t]}|Z^d_s-Z_s|\toas 0,&&\text{ as }d\to\infty.
\end{align}
For all $s\in[0,t]$ and sufficiently large $n$ we have
\begin{align}
|V^{n,d}_s-V^{n,\eps}_s|
\leq\ &\sum_{m\leq d: \,T_m\in(0,[ns]/n]}\sum_{l=d\wedge v_{t}^m}^{d\vee v_t^m}|f(\Delta L_{T_m}n^{\alpha} g_{i_m+l,n}(T_m))|\nonumber\\
 &+\sum_{m>d:\,T_m\in(0,[ns]/n]}\sum_{l=0}^{ v_t^m}|f(\Delta L_{T_m}n^{\alpha} g_{i_m+l,n}(T_m))|\nonumber\\
\leq\ & \sum_{m: \, T_m\in(0,t]}\sum_{l=d\wedge v_{t}^m}^{n-1}|f(\Delta L_{T_m}n^{\alpha} g_{i_m+l,n}(T_m))|\nonumber\\
 &+\sum_{m>d:\,T_m\in(0,[nt]/n]}\sum_{l=0}^{ n-1}|f(\Delta L_{T_m}n^{\alpha} g_{i_m+l,n}(T_m))|.\nonumber
\end{align}
Therefore, \eqref{V^nvsV^nd} follows from Lemma \ref{LimEx} by the dominated convergence theorem 
since the random index $v_t^m=v_t^m(n,\omega)$ satisfies $\liminf_{n\to\infty} v^m_t(n,\omega)=\infty$, almost surely. 
Lemma \ref{LimEx} also implies \eqref{Zcon1}, since
\begin{align*}
&\sup_{s\in[0,t]}|Z^d_s-Z_s|\leq\\
 &\sum_{m\leq d:\,T_m\in (0,t]}\sum_{l=d+1}^{\infty} |f(\Delta L_{T_m}h_k(l+U_m))|+ \sum_{m> d:\,T_m\in (0,t]}\sum_{l=0}^{\infty} |f(\Delta L_{T_m}h_k(l+U_m))|.
\end{align*} 
The lemma now follows from \eqref{fidiconsta}, \eqref{V^nvsV^nd} and \eqref{Zcon1}.
\end{proof}

Recalling the decomposition \eqref{MLim} and applying the triangle inequality, the proof can be completed by showing that 
\begin{align}\label{VvsM}
J_n:=\sum_{i=k}^{[nt]} |f(n^\alpha \din X)-f(n^\alpha M_{i,n,\eps})| \toas 0,
\end{align}
as $n\to \infty$. We first argue that the random variables $\{n^\alpha M_{i,n,\eps},n^\alpha \din X\}_{n\in\N,i\in\{k,...,[nt]\}}$ are  uniformly bounded by a constant on $\Omega_\eps$, which will allow us to apply the estimate \eqref{fEstStaCon}. The random variables $M_{i,n,\eps}$ satisfy by construction either $|n^\alpha M_{i,n,\eps}|=0$ or $|n^\alpha M_{i,n,\eps}|=|n^\alpha g_{i,n}(T_m)\Delta L_{T_m}|$ for some $m$, where we recall that on $\Omega_\eps$ it holds that $T_m-T_{m-1}>\eps$. Consequently, they are uniformly bounded by Lemma \ref{g_{i,n}Est}, where we used that $k>\alpha$ and that the jumps of $L$ are bounded on $\Omega_\eps$. The uniform boundedness of $n^\alpha\din X=n^\alpha(M_{i,n,\eps}+R_{i,n,\eps})$ follows by \cite[Eqs. (4.8), (4.12)]{BasLacPod2016} which implies that for any $\eta>0$ 
\begin{align}\label{Rbound}
\sup_{n\in\N,\ i\in\{k,\dots,[nt]\}}\big\{n^{k-\eta}|R_{i,n,\eps}|\big\}<\infty,\quad \text{almost surely.}
\end{align}

In order to show \eqref{VvsM} we apply Taylor expansion for $f$ at $n^\alpha M_{i,n,\eps}$, and bound the terms in the Taylor expansion using \eqref{fEstStaCon} and the following lemma.
\begin{lem}\label{TayEst}
Let $\psi:\R\to\R$ be continuous and such that $|\psi(x)|\leq C|x|^{\gamma}$ for all $x\in[-1,1]$ for some $\gamma\in (0,1/(k-\alpha))$. It holds on $\Omega_\eps$ that
\[\limsup_{n\to\infty}\bigg\{ n^{(k-\alpha)\gamma-1}\sum_{i=k}^{[nt]} |\psi(n^\alpha M_{i,n,\eps})|\bigg\}\leq C,\quad \text{a.s.} \]
\end{lem}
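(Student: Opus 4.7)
The proof strategy exploits the compound Poisson structure and the separation of jumps on $\Omega_\eps$ to reorganize the sum and apply the kernel estimates from Lemma \ref{g_{i,n}Est}.

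The plan is to first reorganize the sum. On $\Omega_\eps$, jump times in $[0,t]$ are at least $\eps$ apart, so for each index $i$ there is at most one jump time $T_m \in (i/n-\eps, i/n]$. Hence $M_{i,n,\eps}$ is either $0$ or equal to $g_{i,n}(T_m)\Delta L_{T_m}$ for a unique $m$, and writing $i = i_m + l$ with $l \in \{0,\dots, v_t^m\}$ (where $v_t^m \leq [\eps n]$) as in \eqref{v_t^mdef}, one obtains
\[
\sum_{i=k}^{[nt]} |\psi(n^\alpha M_{i,n,\eps})| \;\leq\; \sum_{m:\, T_m\in[0,t]} \sum_{l=0}^{v_t^m} \bigl|\psi\bigl(n^\alpha g_{i_m+l,n}(T_m)\,\Delta L_{T_m}\bigr)\bigr|.
\]
Since $L$ is compound Poisson, the outer sum has a finite (random) number of terms.

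Next I would bound $n^\alpha g_{i_m+l,n}(T_m)$ using Lemma \ref{g_{i,n}Est}. Assuming without loss of generality that $\eps<\delta$, the first estimate of the lemma gives $|n^\alpha g_{i_m+l,n}(T_m)| \leq C$ for $l \in \{0,\dots,k+1\}$, while the second estimate gives $|n^\alpha g_{i_m+l,n}(T_m)| \leq C(l-k)^{\alpha-k}$ for $l \in \{k+2,\dots, v_t^m\}$. On $\Omega_\eps$ the jumps of $L$ are bounded by $\eps^{-1}$, so $n^\alpha M_{i,n,\eps}$ takes values in a bounded set; the continuity of $\psi$ handles the finitely many $l\leq k+1$ by a uniform constant $C_\eps$, while for $l \geq k+2$ the value $n^\alpha g_{i_m+l,n}(T_m)\Delta L_{T_m}$ tends to $0$ uniformly in $l$, so the hypothesis $|\psi(x)| \leq C|x|^\gamma$ on $[-1,1]$ applies (for $n$ large) and gives
\[
\sum_{l=k+2}^{v_t^m} \bigl|\psi(n^\alpha g_{i_m+l,n}(T_m)\Delta L_{T_m})\bigr| \;\leq\; C |\Delta L_{T_m}|^\gamma \sum_{l=k+2}^{[\eps n]} (l-k)^{(\alpha-k)\gamma}.
\]

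The decisive step is the $p$-series estimate. Since $\gamma < 1/(k-\alpha)$, the exponent $(\alpha-k)\gamma$ lies strictly in $(-1,0)$, and therefore
\[
\sum_{l=k+2}^{[\eps n]} (l-k)^{(\alpha-k)\gamma} \;\leq\; C\, n^{1-(k-\alpha)\gamma}.
\]
Combining this with the bounded contribution from $l \leq k+1$ and summing over the finitely many jumps $T_m \in [0,t]$ yields, almost surely on $\Omega_\eps$,
\[
\sum_{i=k}^{[nt]} |\psi(n^\alpha M_{i,n,\eps})| \;\leq\; C(\omega)\bigl(1 + n^{1-(k-\alpha)\gamma}\bigr),
\]
and multiplying by $n^{(k-\alpha)\gamma-1}$ gives the desired $\limsup$ bound. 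The main (minor) obstacle is verifying that the hypothesis $\gamma < 1/(k-\alpha)$ is exactly what is needed to ensure the exponent in the $p$-series falls in the summable/polynomially growing regime; everything else is routine given Lemma \ref{g_{i,n}Est} and the restriction to the good set $\Omega_\eps$.
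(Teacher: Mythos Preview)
Your proposal is correct and follows essentially the same approach as the paper: reorganize the sum over jump times via the identity \eqref{sumM=V}, apply the kernel bounds from Lemma~\ref{g_{i,n}Est} to get $|n^\alpha g_{i_m+l,n}(T_m)|\leq C(l-k)^{\alpha-k}$ for the tail in $l$, and then use that $(\alpha-k)\gamma\in(-1,0)$ to obtain the $n^{1-(k-\alpha)\gamma}$ growth rate via the $p$-series (integral comparison) estimate. Your treatment of the finitely many small-$l$ terms via continuity of $\psi$ on the bounded range is in fact slightly more careful than the paper's, which tacitly applies the $|\psi(x)|\leq C|x|^\gamma$ bound there as well.
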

\begin{proof}
We have on $\Omega_\eps$ 
\[\sum_{i=k}^{[nt]} |\psi(n^\alpha M_{i,n,\eps})|=W^{n,\eps}_{t},\]
where
\[W^{n,\eps}_{t}:=  \sum_{m:T_m\in (0,[nt]/n]}\sum_{l=0}^{v_{t_\infty}^m}|\psi(n^{\alpha}\Delta L_{T_m}g_{i_m +l,n}(T_m))|,\]
and $v_{t_\infty}^m$ is the random index defined in \eqref{v_t^mdef}. By Lemma \ref{g_{i,n}Est} the random variables $n^{\alpha }g_{i_m+l,n}(T_m)$ are bounded for $l=0,...,k$. For $l\in\{k+1,...,n-1\}$, Lemma \ref{g_{i,n}Est} implies that $n^{\alpha }g_{i_m+l,n}(T_m)\leq C(l-k)^{\alpha-k}$. Since the random index $v_{t_\infty}^m$ satisfies $v_{t_\infty}^m< n$ for all $m$, we obtain on $\Omega_\eps$
\[\sum_{i=k}^{[nt]} |\psi(n^\alpha M_{i,n,\eps})|\leq C\sum_{m:T_m\in (0,t]}\bigg(\sum_{l=0}^{k}|n^\alpha g_{i_m+l,n}(T_m)|^\gamma +\sum_{l=k+1}^{n}|(l-k)^{\alpha-k}|^\gamma\bigg).\]
It follows by comparison with the integral $\int_{k+1}^n(s-k)^{(\alpha-k)\gamma}\diff s$ that the right hand side multiplied with $n^{(k-\alpha)\gamma-1}$ is convergent, where we used that $(\alpha-k)\gamma\in (-1, 0)$ and that the number of jumps of $L(\omega)$ in $[0,t]$ is uniformly bounded for 
$\omega\in\Omega_\eps$.
\end{proof}

Considering the sum $J_n$ in \eqref{VvsM}, Taylor expansion up to order $r=[p]$ shows that
\begin{align}\label{Taylor}
J_{n}
&\leq \sum_{i=k}^{[nt]} \big| n^\alpha R_{i,n,\eps} f'(n^{\alpha} M_{i,n,\eps})\big| +\dots+ \frac 1{r!}\sum_{i=k}^{[nt]} \big| (n^\alpha R_{i,n,\eps})^r f^{(r)}(n^{\alpha} M_{i,n,\eps})\big|+TR_r\nonumber\\
&:=T_1+\dots +T_r+ TR_r,
\end{align}
where $TR_r$ denotes the Taylor rest term.
Recalling the estimate \eqref{fEstStaCon2}, we can now estimate the $j$th Taylor monomial $T_j$
for $j=0,\dots,[p]$ by applying Lemma~\ref{TayEst} on $\psi=f^{(j)}$, where we remark that $\gamma_j=\frac{p-j}{p(k-\alpha)}\in (0,1/(k-\alpha))$. 
Using \eqref{Rbound} and recalling that $p>k-\alpha$, we obtain that for sufficiently small $\eta>0$
\begin{align}\label{TayMon}
 \frac 1 {j!}\sum_{i=k}^{[nt]} \big| (n^\alpha R_{i,n,\eps})^j f^{(j)}(n^{\alpha} M_{i,n,\eps})\big|
	&\leq C n^{-j/p-\eta}\sum_{i=k}^{[nt]}| f^{(j)}(n^{\alpha} M_{i,n,\eps})\big|\nonumber\\
	&\leq C n^{-\eta},
\end{align}
where the second inequality follows from Lemma~\ref{TayEst} since $(k-\alpha)\gamma_j-1 = -j/p.$
 For the Taylor rest term $TR_r$ we obtain by the mean value theorem:
\[TR_r = \frac 1 {r!}\sum_{i=k}^{[nt]} \big| (n^\alpha R_{i,n,\eps})^{r}  \big(f^{(r)}(\xi_{i,n})-f^{(r)}(n^\alpha M_{i,n,\eps})\big)\big|,\]
with $\xi_{i,n}\in (n^\alpha |M_{i,n,\eps}|,n^\alpha |X_{i,n,\eps}|)$ where we set $(a,b):=(b,a)$ for $a>b$. Since $n^\alpha |M_{i,n,\eps}|$ and $n^\alpha|X_{i,n,\eps}|$ are bounded and $f^{(r)}$ is locally 
$(p-r)$-H\"older continuous, it follows that
\[TR_r\leq Cn\sup_{n\in\N,\ i\in\{k,\dots,[nt]\}} |n^\alpha R_{i,n,\eps}|^p.\]
From \eqref{Rbound} it follows that $TR_r\to 0$ as $n\to\infty$, where we recall that $(\alpha-k)p<-1.$ 
Together with \eqref{Taylor} and \eqref{TayMon} this implies $J_n\toas 0$, and it follows that
\[\sup_{s\in[0,t]}\bigg\{\bigg|V(f;k)_s^n-\sum_{i=k}^{[ns]}f(n^\alpha M_{i,n,\eps})\bigg|\bigg\}\toas 0\]
on $\Omega_\eps.$ Now, the proposition follows from Lemma~\ref{fidi} by letting $\eps\to 0.$


\subsubsection {Functional convergence}


In this subsection we show that if $f$ satisfies \ref{con3} and under the assumption that $L$ is a compound Poisson process, the convergence in Proposition \ref{prop1}(i) holds in the functional sense with respect to the Skorokhod  $M_1$-topology. 
To this end, we denote by $\skorcon$ the stable convergence of c\`adl\`ag processes on $\mathbb D([0,t];\R)$ equipped with the Skorokhod $M_1$-topology.
 We first replace \ref{con3} by the following stronger auxiliary assumption.{\it
 \begin{enumerate}[label=(FC')]\setcounter{enumi}{2}
\item \label{con3'} 
It holds that $f$ is either non-negative or non-positive.
\end{enumerate}}
 This assumption puts us into the comfortable situation that our limiting process is monotonic. 
  Recall the definition of the processes $V^{n,\eps}$ and $ Z$ introduced in \eqref{MLim} and \eqref{V^nDef}, respectively.
In Lemma \ref{fidi} the stable convergence of the finite dimensional distributions of $V^{n,\eps}$ to $Z$ was shown. By Prokhorov's theorem the functional convergence $V^{n,\eps}\skorcon Z$ on $\Omega_\eps$ follows  from the following lemma.
 \begin{lem}\label{tight}
The sequence of $\D([0,t])$-valued random variables $(V^{n,\eps}\mathds 1_{\Omega_\eps})_{n\geq 1}$ is tight with respect to Skorokhod $M_1$-topology. 
\end{lem}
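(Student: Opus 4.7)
The plan is to exploit the monotonicity of the process $V^{n,\eps}$ that is forced by assumption \ref{con3'}. Without loss of generality, suppose that $f\geq 0$ (the case $f\leq 0$ is identical after considering $-f$). From the definitions \eqref{V^nDef} and \eqref{v_t^mdef}, the collection of pairs $(m,l)$ contributing to the double sum defining $V^{n,\eps}_s$ grows monotonically with $s$: as $s$ increases, the outer index set $\{m:T_m\in(0,[ns]/n]\}$ can only grow, and for each such $m$ the upper bound $v_s^m$ is non-decreasing in $s$ by inspection of \eqref{v_t^mdef}. Since every summand $f(n^\alpha\Delta L_{T_m}g_{i_m+l,n}(T_m))$ is non-negative, the path $s\mapsto V^{n,\eps}_s(\omega)\mathds 1_{\Omega_\eps}(\omega)$ is non-decreasing in $s\in[0,t]$.

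Next I would invoke the following elementary fact about the $M_1$-oscillation function defined in \eqref{oscfun}: if $\phi\in\D([0,t])$ is monotonic, then $w(\phi,s,\delta)=0$ for all $s\in[0,t]$ and all $\delta>0$. Indeed, when $\phi$ is non-decreasing and $s_1<s_2<s_3$, one has $\phi(s_1)\leq\phi(s_2)\leq\phi(s_3)$, so $\phi(s_2)\in[\phi(s_1),\phi(s_3)]$ and the defining quantity in \eqref{oscfun} vanishes. This applied path-by-path shows that
\[
\sup_{s\in[0,t]}w\bigl(V^{n,\eps}\mathds 1_{\Omega_\eps},s,\delta\bigr)=0\quad\text{for all }\delta>0,\ n\geq 1.
\]

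I would then appeal to the standard $M_1$-tightness criterion for $\D([0,t])$ (see Whitt \cite{Whi2002}, Theorem 12.12.3), which, in our setting, requires two ingredients: (a) uniform boundedness in probability of the supremum $\sup_{s\in[0,t]}|V^{n,\eps}_s\mathds 1_{\Omega_\eps}|$ over $n$, and (b) vanishing oscillation in the sense $\lim_{\delta\downarrow 0}\limsup_{n}\mathbb P(\sup_{s\in[0,t]}w(V^{n,\eps}\mathds 1_{\Omega_\eps},s,\delta)>\eta)=0$ for each $\eta>0$. Condition (b) is vacuous by the previous step. For condition (a), monotonicity again yields $\sup_{s\in[0,t]}V^{n,\eps}_s=V^{n,\eps}_t$, and Lemma~\ref{fidi} gives the stable convergence $V^{n,\eps}_t\mathds 1_{\Omega_\eps}\tols Z_t\mathds 1_{\Omega_\eps}$ with $Z_t<\infty$ almost surely by Lemma~\ref{LimEx}. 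Convergence in distribution entails tightness of the marginal, which proves (a).

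I do not expect a serious obstacle here: the core of the proof is the observation that \ref{con3'} forces monotonic sample paths, after which the $M_1$-oscillation is trivially zero and tightness reduces to tightness of the terminal value, already provided by the finite-dimensional convergence. The only mildly delicate point is verifying the monotonicity claim carefully from \eqref{v_t^mdef}, in particular that the boundary case distinction in the definition of $v_t^m$ does not destroy monotonicity in $t$; this is a direct check.
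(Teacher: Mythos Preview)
Your proposal is correct and follows essentially the same route as the paper: both invoke \cite[Theorem 12.12.3]{Whi2002}, use that assumption \ref{con3'} forces monotone sample paths so the $M_1$-oscillation vanishes identically, and reduce the remaining tightness condition to the tightness of the terminal marginal $V^{n,\eps}_t$, which is supplied by Lemma~\ref{fidi}. Your write-up is somewhat more detailed than the paper's (in particular, you justify the monotonicity from \eqref{v_t^mdef} and pass through the supremum explicitly), but the argument is the same.
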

\begin{proof}
It is sufficient to show that the conditions of \cite[Theorem 12.12.3]{Whi2002} are satisfied.
Condition (i) is satisfied, since the family of real valued random variables $(V_{t}^{n,\eps})_{n\geq 1}$ is tight by Lemma \ref{fidi}. Condition (ii) is satisfied, since the oscillating function $w_s$ introduced in \cite[Chapter 12, (5.1)]{Whi2002} satisfies $w_s(V^{n,\eps},\theta)=0$ for all $\theta>0$ and all $n$, since $V^{n,\eps}$ is monotonic by assumption \ref{con3'}.
\end{proof}
Recalling the identity \eqref{sumM=V} and the asymptotic equivalence of $\sum_{i=k}^{[nt]}f(n^\alpha M_{i,n,\eps})$ and $V(f;k)^n_t$ shown in \eqref{VvsM} and thereafter, the functional convergence in Proposition \ref{prop1}(i) follows.

Now, for general $f$ satisfying condition \ref{con3} we decompose $f=f_++f_-$ with $f_+(x)=f(x)\mathds 1_{\{x>0\}}$ and $f_-(x)=f(x)\mathds 1_{\{x<0\}}.$ Both functions $f_+$ and $f_-$ satisfy \ref{con3'}, and the functional convergence of $V(f_+;k)^n$ and $V(f_-;k)^n$ follows, with the corresponding limits denoted by $Z^+$ and $Z^-$. Note that $Z^+$ jumps exactly at those times, where the L\'evy process $L$ jumps up, and $Z^-$ at those, where it jumps down. In particular, $Z^+$ and $Z^-$ do not jump at the same time, which implies that summation is continuous at $(Z^+,Z^-)$ with respect to the $M_1$-topology (cf.\ \cite[Theorem~12.7.3]{Whi2002}). Thus, an application of the continuous mapping theorem yields the convergence of $V(f;k)^n=V(f_+;k)^n+V(f_-;k)^n$ towards $Z=Z^++Z^-.$ Let us stress that indeed the sole reason why the extra condition \ref{con3} is required for functional convergence is that summation is not continuous on the Skorokhod space in general, and the convergence of $V(f_+;k)^n$ and $V(f_-;k)^n$ does not necessarily imply the convergence of $V(f;k)^n$.


\subsubsection{Extension to infinite activity L\'evy processes}


In this section we extend the results of Proposition~\ref{prop1}(i) to moving averages driven by a general L\'evy process $L,$ by approximating $L$ by a sequence of compound Poisson processes $(\hat L(j))_{j\geq 1}.$ To this end we introduce the following notation. Let $N$ be the jump measure of $L$, that is $N(A):=\#\{t:(t,\Delta L_t)\in A\}$ for measurable $A\subset \R\times (\R\setminus \{0\}),$ and define for $j\in \N$
\[X_t(j):= \int_{(-\infty,t]\times [-\frac 1 j, \frac 1 j]}\{(g(t-s)-g_0(-s))x\}N(ds,dx).\]
Denote $\hat X_t(j):= X_t-X_t(j).$ The results of the last section show that Proposition \ref{prop1}(i) holds for $\hat X(j),$ since it is a moving average driven by a compound Poisson process. By letting $j\to\infty$ we will show that the theorem remains valid for $X$ by deriving the following approximation result
\begin{lem}\label{lemSmaJumApp} Suppose that $f$ satisfies the conditions of 
Proposition \ref{prop1}(i). It holds that
\begin{equation}\label{SmaJumApprox}
\lim_{j\to\infty} \limsup_{n\to\infty} \P\bigg(\sup_{s\in[0,t]}|V(X,f;k)^n_s-V(\hat X(j),f;k)^n_s|>\eps\bigg)=0,\quad \text{for all }\eps>0.
\end{equation}      
\end{lem}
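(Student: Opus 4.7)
The plan is to approximate $f(n^\alpha \Delta_{i,k}^n X)$ by $f(n^\alpha \Delta_{i,k}^n \hat X(j))$ via Taylor expansion, and to control the resulting error by moment estimates on the small-jumps increment $\Delta_{i,k}^n X(j)$, which vanish as $j\to\infty$. Write $n^\alpha \Delta_{i,k}^n X = Y_i + Z_i$ with $Y_i := n^\alpha \Delta_{i,k}^n \hat X(j)$ and $Z_i := n^\alpha \Delta_{i,k}^n X(j)$; these are independent because they are Poisson integrals over disjoint restrictions of the jump measure $N$. By the triangle inequality,
\begin{equation*}
\sup_{s \in [0,t]} \big|V(X,f;k)^n_s - V(\hat X(j),f;k)^n_s\big| \leq D_{n,j} := \sum_{i=k}^{[nt]} \big|f(Y_i + Z_i) - f(Y_i)\big|,
\end{equation*}
so by Markov's inequality it suffices to show $\lim_{j\to\infty}\limsup_{n\to\infty}\E\big[D_{n,j}\,\1_{\{\max_i|Y_i|\leq N\}}\big]\to 0$ together with $\limsup_{n\to\infty}\P(\max_i|Y_i|>N)\to 0$ as $N\to\infty$.

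On $\{|Y_i|\leq N\}$, I would apply Taylor's theorem of order $r=[p]$ to $f$ at $Y_i$ with increment $Z_i$. Using $f^{(l)}(0)=0$ for $l=0,\dots,r$ and the local $(p-r)$-H\"older continuity of $f^{(r)}$ (cf.\ \eqref{fEstStaCon}), this yields the pointwise bound
\begin{equation*}
\big|f(Y_i+Z_i)-f(Y_i)\big| \leq C_N\sum_{m=1}^{r} |Y_i|^{p-m}|Z_i|^m + C_N|Z_i|^{p}.
\end{equation*}
Independence of $Y_i$ and $Z_i$ then decouples each term upon taking expectation into $\E[|Y_i|^{p-m}]\,\E[|Z_i|^m]$. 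The moments of $Z_i$ are controlled via the isometry for Poisson stochastic integrals of \cite{RajRos1989}: for any $q\in(\beta,2]$,
\begin{equation*}
\E[|Z_i|^q] \leq C\, n^{\alpha q}\|g_{i,n}\|_{L^q(\R)}^{q}\int_{|x|\leq 1/j}|x|^q\,\nu(dx),
\end{equation*}
and the last integral tends to $0$ as $j\to\infty$ for $q>\beta$, by dominated convergence and the definition of the Blumenthal--Getoor index. A uniform (in $n,j,i$) bound on $\E[|Y_i|^{p-m}]$ follows from the analogous moment estimate applied to the restricted L\'evy measure $\nu|_{\{|x|>1/j\}}$, using $\theta<2$ for integrability. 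Plugging in the kernel estimates from Lemma~\ref{g_{i,n}Est} to bound $\|g_{i,n}\|_{L^q(\R)}^q$ and summing over $i=k,\dots,[nt]$ (which contributes an extra factor of order $n$), the truncated contribution to $\E[D_{n,j}]$ vanishes in the iterated limit. The complementary event is handled by a uniform maximal moment inequality on $Y_i$ (again via the Raj-Ros isometry), rendering $\P(\max_i|Y_i|>N)$ arbitrarily small for large $N$, uniformly in $n$ and $j$.

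\textbf{Main obstacle.} The principal technical difficulty is the careful balancing of the moment exponent $q$ against the Taylor order $m$: one needs $q>\beta$ to exploit the vanishing small-jump integral, yet $n\cdot n^{\alpha q}\sum_i\|g_{i,n}\|_{L^q(\R)}^q$ must remain bounded uniformly in $n$. The hypotheses $p>\beta\vee 1/(k-\alpha)$ and $k>\alpha$ from Theorem~\ref{th1}(i) are precisely what enables this balance via the three-regime estimates in Lemma~\ref{g_{i,n}Est}; in particular, the restriction $p>1/(k-\alpha)$ is needed to ensure that the exponent $\gamma_j=(p-j)/(p(k-\alpha))$ used in conjunction with \eqref{fEstStaCon2} remains admissible.
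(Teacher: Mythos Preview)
Your approach via high-order Taylor expansion and moment estimates differs from the paper's, which uses only the mean value theorem (for $p>1$) or direct H\"older continuity (for $p<1$) and relies on previously established limit theory for power variations. There is, however, a genuine gap in your argument.

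The critical step is the claim that $\limsup_{n\to\infty}\P(\max_{k\leq i\leq [nt]}|Y_i|>N)\to 0$ as $N\to\infty$, uniformly in $j$. You assert this follows from a ``maximal moment inequality via the Rajput--Rosi\'nski isometry'', but no such inequality applies here: even a uniform bound $\E[|Y_i|^q]\leq C$ yields only $\P(\max_i|Y_i|>N)\leq nCN^{-q}$ by the union bound, which does not vanish uniformly in $n$. The driving process $\hat L(j)$ retains all large jumps of $L$, so no moment of $Y_i$ above order $\theta$ need exist, and the $Y_i$'s are strongly dependent. The paper obtains this tightness by an entirely different route: it invokes the stable convergence $\sum_{i=k}^{[nt]}|n^\alpha\Delta_{i,k}^n\hat X(j)|^p\tols Z_j$ from \cite[Eq.~(4.34)]{BasLacPod2016} together with tightness of $\{Z_j\}_j$ from \cite[Eq.~(4.35)]{BasLacPod2016}, and then uses $\max_i|Y_i|\leq(\sum_i|Y_i|^p)^{1/p}$. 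This is a substantive input that your sketch does not supply.

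A secondary issue: your remainder bound $|f^{(r)}(\xi)-f^{(r)}(Y_i)|\leq C_N|Z_i|^{p-r}$ requires the intermediate point $\xi\in[Y_i,Y_i+Z_i]$ to lie in a fixed compact set, since $f^{(r)}$ is only \emph{locally} $(p-r)$-H\"older continuous. On the event $\{|Y_i|\leq N\}$ alone, $Z_i$ is unrestricted and $\xi$ can be arbitrarily large. The paper avoids this by also reducing to $|n^\alpha\Delta_{i,k}^n X|\leq N$ via the analogous tightness for $X$, so that both endpoints are bounded. Once both reductions are in place, the paper controls the difference using \cite[Lemma~4.2]{BasLacPod2016}, which is a convergence-in-probability statement for $\sum_i|n^\alpha\Delta_{i,k}^n X(j)|^q$ with $q>\beta\vee\frac{1}{k-\alpha}$, rather than a moment bound; this sidesteps the integrability problems that arise when $m<\beta$ in your decoupled terms $\E[|Z_i|^m]$.
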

\begin{proof}
In the following we say that a family $\{Y_{n,j}\}_{n,j\in\N}$ of random variables is {\it asymptotically tight} if for any $\eps>0$ there is an $N>0$ such that
\[\limsup_{n\to\infty}\P(|Y_{n,j}|>N)<\eps,\quad \text{for all $j\in\N$}.\]
We deduce first for $p>\beta\vee \frac{1}{k-\alpha}$ the asymptotic tightness of the two families
 \begin{align}
&\bigg\{ \sum_{i=k}^{[nt]}|n^{\alpha}\din \hat X(j)|^{p}\bigg\}_{n,j\in\N}\quad\text{and}\quad\bigg\{\max_{i= k,...,[nt]}|n^\alpha \din \hat X(j)|\bigg\}_{n,j\in\N}, \label{AsyTig1}
\intertext{and tightness of}
\label{sdlfjsdhhs}
 &\bigg\{\sum_{i=k}^{[nt]}|n^{\alpha}\din  X|^{p}\bigg\}_{n\in \N}\quad \text{and}\quad \bigg\{\max_{i= k,...,[nt]} |n^\alpha \din X|\bigg\}_{n\in\N}.\end{align}
The authors of \cite{BasLacPod2016} show the stable convergences in law
\begin{equation}\label{sdlfjsdlhf}
 \sum_{i=k}^{[nt]} |n^\alpha \din \hat X(j)|^p\tols Z_j,\qquad \text{and}\qquad\sum_{i=k}^{[nt]} |n^\alpha \din X|^p\tols Z,
\end{equation}
where $Z_j$ and $Z$ are defined as in \cite[Eq.~(4.34)]{BasLacPod2016}. The asymptotic tightness of the first family of random variables in \eqref{AsyTig1} follows thus from  the tightness of the family $\{Z_j\}_{j\in\N}$, see \cite[Eq.~(4.35)]{BasLacPod2016}.
The asymptotic tightness of the second family of random variables from \eqref{AsyTig1}  follows from the first by the estimate $\max_{i=1,...,n}|a_i|\leq \big(\sum_{i=1}^n |a_i|^p\big)^{1/p}$ for $a_1,...,a_n\in\R.$ The second statement of \eqref{sdlfjsdlhf} implies \eqref{sdlfjsdhhs} by similar arguments. 
The (asymptotic) tightness of the two families on the right-hands side of \eqref{AsyTig1} and \eqref{sdlfjsdhhs}  allows us, for the proof of \eqref{SmaJumApprox}, to assume that 
$|\din \hat X(j)|$ and $|\din X|$ are uniformly bounded by some $N>0$. 

Consider first the case $p<1.$ By local H\"older-continuity of $f$ of order $p$ we have that
\[\sup_{s\in[0,t]}|V(f,X;k)^n_s-V(f,\hat X(j);k)^n_s|\leq C_N \sum_{i=k}^{[nt]}|n^{\alpha}\din X(j)|^{p}, \]
and \eqref{SmaJumApprox} follows from \cite[Lemma 4.2]{BasLacPod2016}, where we used that $p>\beta\vee \frac 1 {(k-\alpha)}$. 
Let now $p>1$. We can find $\xi_{i,n,j}\in[n^\alpha\din \hat X(j),n^\alpha\din  X]$ such that 
$|f(n^\alpha\din \hat X(j))-f(n^\alpha\din  X)|= |n^\alpha\din  X (j) f'(\xi_{i,n,j})|$ and with $\gamma=\frac{p-1}p\big(\beta\vee \frac{1}{k-\alpha}\big)$ we obtain by \eqref{fEstStaCon} that 
\begin{align*}
& |f(n^\alpha\din \hat X(j))-f(n^\alpha\din  X)|\leq C |n^\alpha\din  X (j)| |\xi_{i,n,j}|^{p-1}\\
&\qquad \leq C |n^\alpha\din  X (j)| |\xi_{i,n,j}|^{\gamma}
	\leq C|n^\alpha\din X(j)|^{\gamma+1}+C |n^\alpha\din X(j)||n^\alpha\din X|^{\gamma},
\end{align*}
since  $\gamma<p-1$ by assumption.
Thus, in order to complete the proof of \eqref{SmaJumApprox}, it is sufficient to show that for all $\eps>0$ we obtain 
\begin{align}
\label{SmaJumEst1}&\lim_{j\to\infty} \limsup_{n\to\infty} \P\bigg(\sum_{i=k}^{[nt]}|n^\alpha\din X(j)|^{\gamma+1}>\eps\bigg)=0,\quad\text{and}\\
\label{SmaJumEst2}&\lim_{j\to\infty} \limsup_{n\to\infty} \P\bigg(\sum_{i=k}^{[nt]}|n^\alpha\din X(j)||n^\alpha\din X|^{\gamma}>\eps\bigg)=0.
\end{align}
By definition it holds that $\gamma+1>\beta\vee \frac 1 {k-\alpha}$, and \eqref{SmaJumEst1} follows from \cite[Lemma 4.2]{BasLacPod2016}.
For \eqref{SmaJumEst2} we choose H\"older conjugates $\theta_1$ and $\theta_2=\theta_1/(\theta_1-1)$ with $\theta_1\in\big(\beta  \vee\frac 1 {k-\alpha},  p \big)$, where we used that $p>1$. 
The H\"older inequality and the estimate $\P(|XY|>\eps)\leq \P(|X|> \eps/N)+P(|Y|> N)$ for any $N>0$ lead to the decomposition
\begin{align*}
&\P\bigg(\sum_{i=k}^{[nt]}|n^\alpha\din X(j)||n^\alpha\din X|^{\gamma}>\eps\bigg)\\
& \eqspace\leq \P\bigg(\sum_{i=k}^{[nt]}|n^{\alpha}\din X(j)|^{\theta_1}>\bigg(\frac{\eps}{N}\bigg)^{\theta_1}\bigg) +
\P\bigg(\sum_{i=k}^{[nt]}|n^{\alpha}\din \hat X(j)|^{\gamma\theta_2}> N^{\theta_2}\bigg)\\
&\eqspace=: J^1_{n,j,N}+J^2_{n,j,N}.
\end{align*}
Since $\theta_1>\beta\vee \frac 1 {k-\alpha}$, yet another application of \cite[Lemma 4.2]{BasLacPod2016} yields that
\[\lim_{j\to\infty}\limsup_{n\to\infty} J^1_{n,j,N}=0\quad\text{ for all $N>0$, and all $\eps>0$.}\]
Moreover, $\theta_1<p$ implies $\gamma\theta_2>\beta\vee\frac 1{k-\alpha}$. Therefore, it follows from the asymptotic tightness of the first family of random variables from \eqref{AsyTig1} that
\[\limsup_{j\to\infty}\limsup_{n\to\infty}J^2_{n,j,N} \to 0,\quad\text{ as $N\to\infty,$}\]
which completes the proof of the lemma.
\end{proof}
 Finally, the proof of Proposition~\ref{prop1}(i) can be completed by letting $j\to\infty$. More precisely, we introduce for $j\in\N$ the stopping times
\[T_{m,j} := 
\begin{cases}
T_m & \text{if } |\Delta L_{T_m}|>1/j,\\
\infty & \text{else}.
\end{cases}
\]
The results of the last two subsections show that 
\[V(\hat X(j),f;k)^n_t\fidi Z_t^j:= \sum_{m: \,T_{m,j}\in[0,t]} \sum_{l=0}^\infty f(\Delta L_{T_{m,j}} h_k(l-U_{m})),\]
 and that the convergence holds in the functional sense with respect to the $M_1$-topology if $f$ satisfies \ref{con3}.
From Lemma \ref{LimEx} and an application of the dominated convergence theorem it follows that
\[\sup_{s\in[0,t]}|Z_s-Z_{s}^j|\toas 0,\quad\text{as } j\to\infty.\]
Proposition \ref{prop1}(i) follows therefore from Lemma~\ref{lemSmaJumApp} and a standard approximation argument
 (cf.\ \cite[Theorem~3.2]{Bil1999}).
\qed

\subsection{Proof of Theorem~\ref{th1}(ii)}\label{sldfjsdljfl}

As mentioned earlier the proof relies upon replacing the increments of $X$ by the increments of its tangent process, which is the linear fractional stable motion $Y$, defined as
\begin{align*} 
Y_t :=  \int_{\R} \{(t-s)_{+}^\alpha  - ( -s)_{+}^\alpha \}\, dL_s,
\end{align*} 
 It is well known that the process $Y$ is self-similar with index $H=\alpha+1/\beta$, i.e. $(Y_{at})_{t\geq 0}\eqdist(a^{H}Y_t)$ for any $a>0$, see \cite{TaqWol1983}. Moreover, the discrete time stationary sequence $(Y_r-Y_{r-1})_{r\in\Z}$ is mixing and hence ergodic, see for example \cite{CamHarWer1987}. Denoting by $V(f;Y)^n$ the variation functional \eqref{vn} with $a_n=n^{-1}$ and $b_n=n^H$ applied on the process $Y$, it follows from Birkhoff's ergodic theorem, see \cite[Theorem 10.6]{Kal2002}, that
\[V(f;Y)^n=\frac 1 n \sum_{i=k}^{n} f(n^H\din Y)\eqdist \frac 1 n \sum_{i=k}^{n} f(\Delta_{i,k}^1 Y)\toas \E[f(\Delta_{k,k}^1 Y)].\]
By \eqref{ScaParInt}, the random variable $\Delta_{k,k}^1 Y \sim S\beta S(\rho_0)$  with  
$\rho_0=\rho_L\|h_k\|_{L^{\beta}(\R)}$, and the right hand side is the limiting expression in 
Theorem \ref{th1}(ii).
It is therefore sufficient to argue that
\begin{align}\label{XvsYErgLim}
\mathbb E \big[|V(X;f)^n -  V(Y;f)^n  |]\to 0,\quad \text{as $n\to\infty$.}
\end{align}
To show \eqref{XvsYErgLim} we use that 
\begin{align}\label{slfjsdljfsdjfhsd}
 \mathbb E \big[|V(X;f)^n -  V(Y;f)^n  |]\leq \E[ |f(n^H\din X) - f(n^H\din Y)|]
\end{align}
which follows by the triangle inequality and stationarity of $\{(\din X, \din Y)\}_{i=k,\dots}$. 
From \cite[Eq.~(4.44)]{BasLacPod2016} we deduce that $\E[ |n^H\din X - n^H\din Y|^p] \to 0$ for all $p<\beta$, which by Lemma~\ref{dsklfhdskfhg} used on $p=1$ implies that the right-hand side of \eqref{slfjsdljfsdjfhsd} converges to zero.  This completes the proof of Theorem~\ref{th1}(ii).

\subsection{Proof of Theorem~\ref{th1}(iii)}


Let us first remark that the growth condition $|f(x)|\leq C(1\vee |x|^{q})$ for some $q$ with $q(k-\alpha)<1$ is weaker for larger $q$ and can therefore be thought of as 
\[|f(x)|\leq C |x|^{\frac 1 {k-\alpha}-\eps}\quad\text{ for }|x|\to\infty,\] if $k>\alpha$, whereas for $k\leq \alpha$ we require only that $f$ is of polynomial growth.
Since by the assumptions of the theorem we have $k-\alpha<1$, we may and do  assume that $q>1.$
We recall that a function $\xi:\R\to\R$ is absolutely continuous if there exists a locally integrable function $\xi'$ such that 
\[\xi(t)-\xi(s)=\int_s^t\xi'(u)\, du,\quad \text{for all $s<t.$}\]
This implies that $\xi$ is differentiable almost everywhere and the derivative coincides with $\xi'$ almost everywhere. If $\xi'$ can  be chosen absolutely continuous we say that $\xi$ is two times absolutely continuous, and similarly we define $k$-times absolute continuity.

By an application of \cite[Theorem 5.1]{BraSam1998} it has been shown in \cite[Lemma 4.3]{BasLacPod2016} that under the condition $(k-\alpha)(1\vee \beta)>1$ the process $X$ admits a $k$-times absolutely continuous version and the $k$-th derivative is a version of the process $(F_u)_{u\in\R}$ defined in \eqref{FDefGenVar}.
Moreover, \cite[Lemma 4.3]{BasLacPod2016} shows that for every $q\geq 1,q\neq \theta$ with $q(k-\alpha)<1$ the process $F$ admits a version with sample paths in $L^q([0,1])$, almost surely, which implies $\int_0^1 |f(F_u)|\,du<\infty.$
With these prerequisites at hand, Theorem \ref{th1}(iii) is a consequence of the following Lemma, which despite its intuitive statement requires some work. We denote by $W^{k,q}$ the space of $k$-times absolutely continuous functions $\xi$ on $[0,1]$ satisfying $\xi^{(k)}\in L^q([0,1]).$

\begin{lem}\label{fVar}
Let $\xi\in W^{k,q},$ and suppose that $f$ is continuous and $ |f(x)|\leq C(1\vee |x|^{q})$ for some $q\geq 1$. 
As $n\to \infty$ it holds that
\begin{equation}\label{dslfjsdghfsdf}
V(\xi;f,k)^n:=n^{-1}\sum_{i=k}^{n}f(n^k \din \xi)\to \int_0^1 f(\xi^{(k)}_s)\diff s.
\end{equation}
\end{lem}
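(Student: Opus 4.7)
Write $\psi:=\xi^{(k)}\in L^q([0,1])$. Note first that the limiting integral $\int_0^1 f(\psi(s))\,ds$ is finite since $|f(\psi)|\le C(1\vee|\psi|^q)\in L^1$. The backbone of the proof is the classical integral representation for higher-order differences of an absolutely continuous function: by induction on $k$ via the fundamental theorem of calculus,
\[
n^k\Delta_{i,k}^n\xi = \int_{[0,1]^k}\psi\bigl((i-k)/n+(v_1+\cdots+v_k)/n\bigr)\,dv,
\]
so $n^k\Delta_{i,k}^n\xi$ is the average of $\psi$ against a probability measure supported on $[(i-k)/n,i/n]\subset[0,1]$.

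Define the step function $\bar\Psi_n(t):=n^k\Delta_{i,k}^n\xi$ for $t\in((i-1)/n,i/n]$, so that $V(\xi;f,k)^n=\int_{(k-1)/n}^1 f(\bar\Psi_n(t))\,dt$; the boundary piece $\int_0^{(k-1)/n}f(\psi)\,ds$ vanishes as $n\to\infty$ since $f\circ\psi\in L^1$. The key analytic step is to show $\bar\Psi_n\to\psi$ in $L^q([0,1])$. To this end I would introduce the auxiliary function $\tilde\Psi_n(t):=\int_{[0,1]^k}\psi(t-k/n+(v_1+\cdots+v_k)/n)\,dv$ on $[k/n,1]$, which equals $\bar\Psi_n(t)$ at the grid points $t=i/n$. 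Then $\tilde\Psi_n$ is the convolution of $\psi$ with an approximate-identity kernel $K_n$ (the density of a sum of $k$ i.i.d.\ Uniform$(0,1/n)$ variables), so by a classical Young-type argument $\tilde\Psi_n\to\psi$ in $L^q$. Moreover, $\bar\Psi_n$ is a step-function version of $\tilde\Psi_n$ at scale $1/n$, and continuity of translations in $L^q$ together with the identity
\[
\|\bar\Psi_n-\tilde\Psi_n\|_{L^q}\le 2\|\tilde\Psi_n-\psi\|_{L^q}+\sup_{|\delta|\le 1/n}\|\psi(\cdot+\delta)-\psi\|_{L^q}
\]
yields $\|\bar\Psi_n-\tilde\Psi_n\|_{L^q}\to 0$. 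Combining these, $\bar\Psi_n\to\psi$ in $L^q$.

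The final step is to pass the limit through $f$ via Vitali's convergence theorem. Along a subsequence $\bar\Psi_n\to\psi$ almost everywhere; and the standard equivalence ``convergence in $L^q$ $\Leftrightarrow$ convergence in probability plus uniform integrability of $|\cdot|^q$'' shows that $\{|\bar\Psi_n|^q\}$ is uniformly integrable. Because $|f(x)|\le C(1\vee|x|^q)$, this upgrades to uniform integrability of $\{f(\bar\Psi_n)\}$ in $L^1$; combined with continuity of $f$ and a.e.\ convergence along the subsequence, Vitali gives $\int_{(k-1)/n}^1 f(\bar\Psi_n)\,dt \to \int_0^1 f(\psi)\,dt$ along the subsequence. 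The limit does not depend on the subsequence chosen, so a standard subsequence-of-subsequence argument upgrades this to convergence of the full sequence, concluding the proof.

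The main obstacle I expect is the $L^q$-convergence $\bar\Psi_n\to\psi$: the case of continuous $\psi$ is trivial by uniform continuity, but for general $\psi\in L^q$ one needs both the Young-type bound $\|\tilde\Psi_n\|_{L^q}\le\|\psi\|_{L^q}$ (via Jensen on the averaging integral and Fubini) and the approximate-identity convergence, followed by the translation step to pass from $\tilde\Psi_n$ to $\bar\Psi_n$. Once this is in place the Vitali argument fits together cleanly because the growth exponent of $f$ matches the integrability exponent of $\psi$ exactly.
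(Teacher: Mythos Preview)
Your approach is genuinely different from the paper's and considerably more streamlined. The paper first proves the case $\xi\in C^{k+1}$ via Taylor expansion, then approximates a general $\xi\in W^{k,q}$ by smooth $\xi^m$ and controls $\sup_{n}|V(\xi;f,k)^n-V(\xi^m;f,k)^n|$ through a lengthy case analysis, splitting the index set according to whether $n^k|\Delta_{i,k}^n\xi|$ and $n^k|\Delta_{i,k}^n\xi^m|$ exceed cutoffs $N,M$. Your route---recast $V(\xi;f,k)^n$ as $\int f(\bar\Psi_n)$, prove $\bar\Psi_n\to\psi$ in $L^q$, and finish with Vitali using that the growth exponent of $f$ matches the integrability exponent of $\psi$---is shorter and more conceptual. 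Both approaches rest on the same integral representation of $n^k\Delta_{i,k}^n\xi$ as an average of $\psi$ over $[(i-k)/n,i/n]$.

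There is, however, a real gap in your passage from $\tilde\Psi_n$ to $\bar\Psi_n$. The displayed inequality
\[
\|\bar\Psi_n-\tilde\Psi_n\|_{L^q}\le 2\|\tilde\Psi_n-\psi\|_{L^q}+\sup_{|\delta|\le 1/n}\|\psi(\cdot+\delta)-\psi\|_{L^q}
\]
is not justified: $\bar\Psi_n(t)=\tilde\Psi_n(\lceil nt\rceil/n)$ is obtained by \emph{pointwise sampling} of $\tilde\Psi_n$ at grid points, and the sampling operator is not bounded on $L^q$. Any attempt to derive the inequality reduces to Riemann-type sums $\sum_i|\psi(a_i+r)-\psi(a_i)|^q$ with $a_i=i/n+\text{const}$, which for a merely $L^q$ function $\psi$ need not be comparable to $n\|\psi(\cdot+r)-\psi\|_{L^q}^q$. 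You do control the $L^q$-modulus of continuity of $\tilde\Psi_n$ by that of $\psi$ via Young, but this bounds translation, not sampling.

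The fix stays entirely within your framework: bypass $\tilde\Psi_n$ and show directly that the linear map $\psi\mapsto\bar\Psi_n$ is uniformly bounded on $L^q$. By Jensen and Fubini,
\[
\|\bar\Psi_n\|_{L^q}^q\le\sum_{i=k}^n\frac1n\int_{[0,1]^k}\bigl|\psi\bigl((i-k)/n+\textstyle\sum_j v_j/n\bigr)\bigr|^q\,dv
=\sum_{i=k}^n\frac1n\int_0^{k/n}|\psi((i-k)/n+u)|^q\,n g(nu)\,du,
\]
where $g$ is the (bounded) Irwin--Hall density of $\sum_{j=1}^k v_j$. Since $g\le M_k$ and each point of $[0,1]$ lies in at most $k$ of the intervals $[(i-k)/n,i/n]$, this gives $\|\bar\Psi_n\|_{L^q}^q\le kM_k\|\psi\|_{L^q}^q$. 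With this uniform operator bound, density of continuous functions in $L^q$ yields $\bar\Psi_n\to\psi$ in $L^q$ directly (the continuous case being trivial by uniform continuity), and your Vitali argument then goes through cleanly.
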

\begin{proof}
Assume first $\xi\in \mathcal C^{k+1}([0,t])$. Taylor approximation shows that
\begin{align}\label{TaylorSmoothxi}
n^k\din\xi=\xi^{(k)}_{\frac{i-k}{n}}+a_{i,n},
\end{align}
where $|a_{i,n}|\leq C/n$ for all $n\geq 1, k\leq i\leq n$. We can therefore assume without loss of generality that $f$ has compact support and admits a concave modulus of continuity $\omega_f$, i.e.\ a continuous increasing function $\omega_f:[0,\infty)\to[0,\infty)$ with $\omega_f(0)=0$ such that $|f(x)-f(y)|\leq \omega_f(|x-y|)$ for all $x,y$.
We have by Jensen's inequality that
\begin{align*}
\limsup_{n\to\infty}\bigg| V(\xi,f,k)^n - \frac 1 n\sum_{i=k}^{n} f\big(\xi^{(k)}_{\frac{i-k}{n}}\big)\bigg|\leq \limsup_{n\to\infty}\bigg\{\omega_f\bigg(\frac 1 {n}\sum_{i=k}^{n} |a_{i,n}|\bigg)\bigg\}=0.
\end{align*}
The result follows by the convergence of Riemann sums
\[\frac 1 n\sum_{i=k}^{n} f\big(\xi^{(k)}_{\frac{i-k}{n}}\big)\to \int_0^1 f(\xi^{(k)}_s)\diff s.\]
In the following we extend the result to general $\xi\in W^{k,q}$ by approximating $\xi$ with a sequence $(\xi^{m})_{m\geq 1}$ of functions in $C^{k+1}([0,1])$. To this end, choose $\xi^m$ such that
\begin{align}
\displaystyle \int_0^1 |\xi^{(k)}_s-\xi_s^{m,(k)}|^q\diff s \leq 1/m,\quad \text{ for all $m$.}\label{xicon}
\end{align}
Indeed, the existence of such a sequence follows since continuous functions are dense in $L^q([0,1])$.
Note that \eqref{xicon} implies that $ \int_0^1 |\xi^{(k)}_s-\xi_s^{m,(k)}|\diff s \leq C/m^{1/q}$, since we assumed $q\geq 1$. 
Since $\xi^{m,(k)}$ converges in $L^q([0,1]),$ the family $(|\xi^{m,(k)}|^q)_{m\geq 1}$ is uniformly integrable. Hence, by the  assumption $|f(x)|\leq C(1\vee |x|^q)$  for $x\in \R$, we obtain uniform integrability of  $\{f(\xi^{m,(k)})_{m\geq 1}\}$. By continuity of $f$, we have that $f(\xi^{m,(k)})\to f(\xi^{(k)})$ in measure, and therefore also in $L^1([0,1])$:
\begin{align}\label{intcon}
\limsup_{m\to\infty}\int_{0}^1 |f(\xi^{(k)}_s)-f(\xi^{m,(k)}_s)|\diff s=0.
\end{align}
Hence, \eqref{dslfjsdghfsdf} follows if we show
\begin{equation}\label{DiffApprox}
\limsup_{m\to\infty}\sup_{n\in\N}|V(\xi;f,k)^n-V(\xi^m;f,k)^n|=0.
\end{equation}
  In order to show \eqref{DiffApprox} we split the sum \vspace{-1em}
\[|V(\xi;f,k)^n-V(\xi^m;f,k)^n|\leq\frac 1 n\sum_{i=k}^{n}\big|f(n^k\din \xi)-f(n^k\din \xi^m)\big|\]
into sums over the following sets of indices, where $N$ and $M$ are positive constants:
  \begin{align*}
A^{N}_n &=\{i\in\{k,...,n\}\,:\,n^k|\din \xi|> N\}\\
B^{N,M}_{m,n} &=\{i\in\{k,...,n\}\,:\,n^k|\din \xi|\leq N,\ n^k|\din \xi^m|> M \}\\
C^{N,M}_{m,n} &=\{i\in\{k,...,n\}\,:\,n^k|\din \xi|\leq N,\ n^k|\din \xi^m|\leq M \}.
\end{align*}
and estimate the corresponding sums separately. 
The following relationship between $\din\xi$ and $\xi^{(k)}$ will be essential.
For all $\xi\in W^{k,q}$ we have
\begin{align*}
\din \xi&= \int_{\frac{i-1}n}^{i/n}\int_{s_1-1/n}^{s_1}\dots\int_{s_{k-1}-1/n}^{s_{k-1}}\xi_{s_k}^{(k)}ds_k\dots ds_1.
\end{align*}
In particular, it follows that
\begin{equation}\label{k-thDerEst}
|n^k\din \xi|\leq \int_{[0,1]^k}n^k|\xi^{(k)}_{s_k}| \mathds 1_{\{(s_1,...,s_k)\in[(i-k)/n,i/n]^k\}}\diff s_k\dots \diff s_1= k^{k-1}\int_{\frac{i-k}n}^{i/n} n|\xi^{(k)}_s|\diff s.
\end{equation}
{\it The $A^{N}_n$ term:} We show that for given $\eps>0$ we can find sufficiently large $N$ such that
\begin{align}\label{A}
&\limsup_{m\to\infty}\sup_{n\in\N} \bigg\{n^{-1}\sum_{i\in A^{N}_n}\big|f(n^k \din \xi)-f(n^k \din \xi^m)\big|\bigg\} \nonumber\\
&\qquad \leq 
\limsup_{m\to\infty}\sup_{n\in\N} \bigg\{ n^{-1}\sum_{i\in A^{N}_n}|n^k \din \xi|^q+n^{-1}\sum_{i\in A^{N}_n}|n^k \din \xi^m|^q\mathds 1_{\{|n^k \din \xi^m|>1\}}\nonumber\\
&\phantom{\qquad \leq 
\limsup_{m\to\infty}\sup_{n\in\N} \bigg\{}+n^{-1}\sum_{i\in A^{N}_n}|f(n^k \din \xi^m)|\mathds 1_{\{|n^k \din \xi^m|\leq 1\}}\bigg\}\nonumber\\
&\qquad =:\limsup_{m\to\infty}\sup_{n\in\N} \big\{I_{1,n,N}+I_{2,n,m,N}+I_{3,n,m,N}\big\}\leq \eps,
\end{align}
First we consider $I_{1,n,N}$.
By \eqref{k-thDerEst} we have for all $i\in A^{N}_n$
\[N<k^{k-1}\int_{\frac{i-k}{n}}^{i/n}|\xi^{(k)}_s|n\diff s\leq k^{k-1}\int_{\frac{i-k}{n}}^{i/n}n|\xi^{(k)}_s|\mathds 1_{\{|\xi_s^{(k)}|>  C_{0,k}\}}\diff s+\frac{N}{2},\]
where $C_{0,k}:=N(2k^k)^{-1}$.
Therefore, again by \eqref{k-thDerEst}, it follows that
\begin{align}\label{A0}
|n^k\din \xi|\leq {}& k^{k-1} \int_{\frac{i-k}{n}}^{i/n}|\xi^{(k)}_s|n\diff s\nonumber\leq 2k^{k-1}\int_{\frac{i-k}{n}}^{i/n}|\xi^{(k)}_s|n\diff s-N \nonumber\\
\leq {}& 2k^{k-1}\int_{\frac{i-k}{n}}^{i/n}|\xi^{(k)}_s|\mathds 1_{\{|\xi_s^{(k)}|> C_{0,k}\}}n\diff s.
\end{align}
 Consequently, recalling that $q\geq 1$, we have by Jensen's inequality
\begin{align}\label{A1}
 n^{-1}\sum_{i\in A^{N}_n}|n^k\din \xi|^q
& \leq (2k^{k-1})^qk^{q-1} n^{-1}\sum_{i\in A^{N}_n}\int_{\frac{i-k}{n}}^{i/n}
|\xi^{(k)}_s|^q\mathds 1_{\{|\xi^{(k)}_s|>C_{0,k}\}}n\diff s\nonumber\\
& \leq (2k^{k})^q \int_{0}^{t} |\xi^{(k)}_s|^q\mathds 1_{\{|\xi^{(k)}_s|>C_{0,k}\}}\diff s.
\end{align}
It follows for sufficiently large $N>0$ that
\begin{align}\label{I_{1,n,N}}
\limsup_{m\to\infty}\sup_{n\in\N} \{I_{1,n,N}\}\leq \eps.
\end{align}
Next, we argue that the same holds for the $I_{2,n,m,N}$ term.
By \eqref{xicon} and Minkowski's inequality it follows for any $A\in\mathcal B([0,1])$ that $\int_A|\xi^{m,(k)}_s|^q\diff s\leq 2^{q-1} \int_A |\xi^{(k)}_s|^q\diff s +C/m$.
 Consequently, it holds  that
\begin{align}
  & n^{-1}\sum_{i\in A^{N}_n}|n^k\din \xi^m|^q\mathds 1_{\{|n^k \din \xi^m|>1\}}
 \leq C n^{-1}\sum_{i\in A^{N}_n}\int_{\frac{i-k}{n}}^{i/n}
|\xi^{m,(k)}_s|^q n\diff s\nonumber\\
& \qquad \leq C \sum_{i\in A^{N}_n} \int_{\frac{i-k}{n}}^{i/n}|\xi^{(k)}_s|^q\diff s+\frac C m\leq C \sum_{i\in A^{N}_n} \int_{\frac{i-k}{n}}^{i/n}|\xi^{(k)}_s|^q\mathds 1_{\{|\xi^{(k)}_s|>C_{0,k}\}}\diff s +\frac C m\nonumber\\
&\qquad \leq C  \int_{0}^{1}|\xi^{(k)}_s|^q\mathds 1_{\{|\xi^{(k)}_s|>C_{0,k}\}}\diff s +\frac C m,\nonumber
\end{align}
where the first inequality follows from \eqref{k-thDerEst} and the third from \eqref{A0}.
This shows that for sufficiently large $N$ it holds that
\begin{align}\label{I_{2,n,m,N}}
\limsup_{m\to\infty}\sup_{n\in\N} \{I_{2,n,m,N}\}\leq \eps.
\end{align}
Next, we estimate the term $I_{3,n,m,N}$. Introducing the notation
\[
D_{m,n} =\{i\in\{k,...,1\}\,:\,n^k|\din \xi^{(m)}|\leq 1\}
\]
we have
\begin{align}\label{I_{3,n,m,N}est}
I_{3,n,m,N}=n^{-1}\sum_{i\in A^{N}_n\cap D_{m,n}}|f(n^k\din\xi^{(m)})|\leq n^{-1}|A^{N}_n\cap D_{m,n}|\sup_{x\in (-1,1)}|f(x)|
\end{align}
where $|A^{N}_n\cap D_{m,n}|$ denotes the number of elements of $A^{N}_n\cap D_{m,n}$. Using \eqref{k-thDerEst} we have for all  $i\in A^{N}_n\cap D_{m,n}$
\[N-1\leq n^k|\din (\xi^{(k)}-\xi^{m,(k)})|\leq k^{k-1}\int_{\frac{i-k}n}^{i/n}|\xi_s^{(k)}-\xi_s^{m,(k)}|n\diff s,\]
and it follows that
\[|A^{N}_{n}\cap D_{m,n}|\leq \frac{n k^k}{N-1}\int_{0}^1|\xi_s^{(k)}-\xi_s^{m,(k)}|n\diff s\leq \frac {n k^kt}{(N-1)m^{1/q}},\]
where we recall \eqref{xicon}. With \eqref{I_{3,n,m,N}est} it follows that for all $N>1$ we have
\begin{align}\label{I_{3,n,m,N}}
\limsup_{m\to\infty}\sup_{n\in\N} \{I_{3,n,m,N}\}=0.
\end{align}
 Combining \eqref{I_{1,n,N}}, \eqref{I_{2,n,m,N}} and \eqref{I_{3,n,m,N}} we conclude that \eqref{A} holds for sufficiently large $N$.

\noindent 
{\it The $B^{N,M}_{m,n}$ term:}
We show that for any $\eps>0$ and any $N>0$ we can find a sufficiently large $M$ such that 
 \begin{align}\label{B}
&\limsup_{m\to\infty}\sup_{n\in\N} \bigg\{n^{-1}\sum_{i\in B^{N,M}_{m,n}}\big|f(n^k \din \xi)-f(n^k \din \xi^m)\big|\bigg\}\nonumber\\
 &\eqspace \leq \limsup_{m\to\infty}\sup_{n\in\N} \bigg\{n^{-1}\sum_{i\in B^{N,M}_{m,n}}\big|f(n^k \din \xi)|
+ n^{-1}\sum_{i\in B^{N,M}_{m,n}}|n^k \din \xi^m|^q\bigg\}\nonumber\\
&\eqspace=:\limsup_{m\to\infty}\sup_{n\in\N}\{J^1_{n,m,N,M}+J^2_{n,m,N,M}\}
<\eps.
\end{align}
The argument for $J^1_{n,m,N,M}$ is similar to the one used for $I_{3,m,n,N}$ above. We assume that $M>N.$ For $i\in B^{N,M}_{m,n}$ it holds by \eqref{k-thDerEst} that
\begin{align*}
M-N<n^k|\din (\xi-\xi^m)|\leq k^{k-1}n \int_{\frac{i-k}n}^{i/n}|\xi_s-\xi_s^m|\diff s.
\end{align*}
Consequently, we have for all $m\in\N$
\[ |B^{N,M}_{m,n}|\leq \frac{k^{k}n}{M-N}\int_0^1|\xi_s-\xi_s^m|\diff s \leq \frac{k^{k}n}{(M-N)m^{1/q}},\]
where $|B^{N,M}_{m,n}|$ denotes the number of elements in $B^{N,M}_{m,n}$. Then, it follows that for all $M>N$ 
\begin{align}\label{J^1_{n,m,N,M}}
\limsup_{m\to \infty}\sup_{n\in \N}\{J^1_{n,m,N,M}\}&\leq \limsup_{m\to\infty} \sup_{n\in \N}\{n^{-1}|B^{N,M}_{m,n}|\sup_{s\in[-N,N]}|f(s)|\}\nonumber\\
& \leq \limsup_{m\to\infty}\sup_{n\in N}\bigg\{\frac{k^{k}}{(M-N)m^{1/q}}\sup_{s\in[-N,N]}|f(s)|\bigg\}=0.
\end{align}
For $J^2_{n,m,N,M}$ we obtain by arguing as in \eqref{A1} with $\xi^{(k)}$ replaced by $\xi^{m,(k)}$ and $N$ replaced by $M$ that
\begin{equation}
 J^2_{n,m,N,M}
	\leq  (2k^{k})^q \int_{0}^{1}|\xi^{m,(k)}_s|^q\mathds 1_{\{|\xi^{m,(k)}_s|>M/2k^k\}}\diff s,
\end{equation}
for all $m,n, N.$
Since  $(|\xi^{m,(k)}|^q)_{m\geq 1}$ is uniformly integrable we can for $\eps>0$ find sufficiently large $M$ such that
\begin{equation}\label{J^2_{n,m,N,M}}
\limsup_{m\to \infty}\sup_{n\in \N}\{J^2_{n,m,N,M}\}\leq \eps.
\end{equation}
Now, \eqref{B} follows from \eqref{J^1_{n,m,N,M}} and \eqref{J^2_{n,m,N,M}}.

\noindent 
{\it The $C^{N,M}_{m,n}$ term:}
We show that for all $N,M>0$ we have that
\begin{align}\label{C}
\limsup_{m\to\infty}\sup_{n\in\N} \bigg\{n^{-1}\sum_{i\in C^{N,M}_{m,n}}\big|f(n^k \din \xi)-f(n^k \din \xi^m)\big|\bigg\} = 0.
\end{align}
Since $|n^k\din\xi|\leq N$ and $|n^k\din\xi^m|\leq M$ for all $i\in C^{N,M}_{m,n}$, we can replace $f$ by a continuous function $\wt \Phi_{N,M}$ with compact support, such that $f(x)=\wt \Phi_{N,M}(x)$ for all $x\in[-(N\vee M),N\vee M].$ Denote by $\wt\omega_{N,M}$ the concave modulus of continuity for $\wt \Phi_{N,M}$.
It holds that
\begin{align*}
&\sup_{n\in\N}\bigg\{n^{-1}\sum_{i\in C^{N,M}_{m,n}}\big|f(n^k \din \xi)-f(n^k \din \xi^m)\big|\bigg\}\\
	&\quad= \sup_{n\in\N}\bigg\{n^{-1}\sum_{i\in C^{N,M}_{m,n}}\big|\wt \Phi_{N,M}(n^k \din \xi)-\wt \Phi_{N,M}(n^k \din \xi^m)\big|\bigg\}\\
	&\quad\leq \sup_{n\in\N}\bigg\{\wt\omega_{N,M}\bigg(n^{-1}\sum_{j=k}^{n} n^k| \din \xi -\din\xi^m|\bigg)\bigg\}
\leq \wt\omega_{N,M}\bigg( k^k\int_0^1|  \xi^{(k)}_s-\xi_s^{m,(k)}|\diff s\bigg),
\end{align*}
where we used \eqref{k-thDerEst} in the last inequality. Now, \eqref{C} follows by \eqref{xicon}.

Finally, by \eqref{A}, \eqref{B} and \eqref{C}, for any $\eps>0$ we can  find sufficiently large $N,M$ such that
\[\limsup_{m\to\infty}\sup_{n\to\infty} \bigg(n^{-1}\sum_{i=k}^{n}\big|f(n^k \din \xi)-f(n^k \din \xi^m)\big|\bigg) < \eps.\]
 By letting $\eps\to 0$ we obtain \eqref{DiffApprox} and the proof of the lemma is complete.
\end{proof}


\section{Proofs of Theorems~\ref{lsjdlfjssss} and \ref{the2ndGenVar}}
\label{sec4}
\setcounter{equation}{0}
\renewcommand{\theequation}{\thesection.\arabic{equation}}


Before carrying out the proofs we will introduce some notation and estimates to be used 
in the following.

\noindent \emph{Definitions and notation:}
 For any function $\psi$ on the real line we denote  
\[D^k \psi(s):= \sum_{j=0}^{k}(-1)^j\binom k j \psi(s-j).\]
Furthermore, set 
\begin{equation}
g_n(s):= n^\alpha g(s/n),\qquad
\phi_t^n(s):= D^kg_n(t-s),\quad\text{and}\quad
Y_t^n := \int_{-\infty}^t \phi_t^n(s)\,dL_s,
\label{phiYDef}\end{equation}
for $n\in \N$.
By our assumptions on the function $g$ it holds that $g_n(s)\to s_+^\alpha$, and consequently $\phi^n_t(s)\to h_k(t-s)$ as $n\to\infty$, where $h_k$ was defined in \eqref{def-h-13}. Therefore, we complement \eqref{phiYDef} by defining\vspace{-1ex}
\begin{align*}
\phi_t^\infty(s):= h_k(t-s),\quad\text{and}\quad
Y_t^\infty := \int_{-\infty}^t h_k(t-s)\,dL_s.
\end{align*}
We recall that $(\F_t)_{t\in\R}$ denotes the filtration generated by $L$ and introduce additionally the $\sigma$-algebras
\[ \G^1_s:=\sigma (L_r-L_u\,|\,s\leq r,u\leq s+1),\]
remarking that $(\G^1_s)_{s\in\R}$ is not a filtration. 
We denote
\begin{align}\label{UDef}
 U_{j,r}^n := \int_{r}^{r+1} \phi_j^n(s)\,dL_s,\quad\text{where $n\in\N\cup\{\infty\}$ and $j\geq k$,}
 \end{align}
and introduce the notation
\begin{equation}\label{rhoDef}
\rho_j^n:=\rho_L \|\phi_j^n\|_{L^\beta(\R\setminus [0,1])},\quad\text{and}\quad \rho^n:=\rho_L\|\phi_1^n\|_{L^\beta(\R)}.
 \end{equation}
 Note that $Y^n_r\sim S\beta S(\rho^n)$ for all $r\geq k$ and $n\in \N$, which follows by \eqref{ScaParInt}.

\noindent
\emph{Preliminary estimates:} For $\xi<\beta$ and $\gamma>0$ there is a $C>0$ such that for all $\rho\in(0,1]$ and $S \sim S\beta S(1)$ we have 
\begin{align}
\E[|\rho S|^\xi\wedge |\rho S|^\gamma]&\leq 
\begin{cases}
C\rho^\beta \qquad & \text{for } \gamma>\beta,\\ 
C \rho^\gamma & \text{for }\gamma <\beta, 
\end{cases}
\label{StaVarEst}
\end{align}
where the first case follows by \cite[Lemma 5.5]{BasLacPod2016}, and the second case is a standard estimate.  
The function $\phi_j^n$ introduced above satisfies the estimate
\begin{align}\label{scaparest}
\|\phi_j^n\|_{L^\beta([0,1])}\leq C j^{\alpha-k},
\end{align}
for all $j\in\N$ and all $n\in\N\cup\{\infty\}$, which follows from Taylor expansion and the condition (A2) in Section \ref{sec2}.
Moreover, $\phi^n_j$  satisfies the following estimate  that has been derived in \cite[Eq.~(5.92)]{BasLacPod2016}. There exists a $C>0$ such that for all $n\in\N$ and $j\in\N$
\begin{align}\label{scaparjest}
\|\phi_j^n-\phi_j^\infty\|_{L^\beta([0,1])}\leq Cn^{-1}j^{\alpha-k+1}.
\end{align}

\begin{remark}\label{sdlfjslfj} \rm
In the proofs of Theorems~\ref{lsjdlfjssss} and \ref{the2ndGenVar} we may and do replace $\E[f(\rho_0 S)]$ by 
$\E[ f(n^H\Delta^n_{i,k} X)]$ in \eqref{clt}, \eqref{lsjdfgioskdow} and \eqref{slt}.  Indeed, to show this claim we first show that the function $\rho\mapsto G(\rho):=\E[ f(\rho S)]$ is 
continuously differentiable on $(0,\infty)$. Let $g_\beta$ denote the density of a S$\beta$S random variable. By substitution we have that 
\begin{equation}\label{sdlfjsgsss}
G(\rho) = \int_\R f(u) g_\beta(u/\rho)\,du.
\end{equation}
Since $\E[ |f(S)|]<\infty$ it follows that $\int |f(u)|(1\wedge |u|^{-1-\beta})\,du<\infty$, cf. \cite[Theorem 1.2]{Wat07}. 
We have that   $g_\beta\in C^\infty(\R)$, according to  \cite[Remark 28.2]{Sato}, and for all $r\geq 1$,  the $r$th derivative of $g_\beta$ satisfies
\begin{equation}\label{sdfsdf}
|g_\beta^{(r)}(x)|\leq C (1\wedge |x|^{-1-\beta-r}), \qquad x\in \R. 
\end{equation}
Indeed, to  show the estimate \eqref{sdfsdf} we use the dual representation  for  stable densities given in \cite[(2.5.5)]{Zolo1986}, which  implies  that 
\begin{equation}\label{ljsdlfhjsdlh}
g_\beta(x) = x^{-1-\beta}\tilde g(x^{-\beta}),\qquad x>0, 
\end{equation}
where $\tilde g$ is the density of a $1/\beta$-distribution. By $r$-times differentiation of \eqref{ljsdlfhjsdlh}, the estimate \eqref{sdfsdf} follows. Hence, from the estimate \eqref{sdfsdf} used on $r=1$ and \eqref{sdlfjsgsss},  it follows that $G\in C^1((0,\infty))$. 
By   \cite[Lemma 5.3]{BasLacPod2016} we have that 
\begin{equation}\label{dslfjsdlfjghs}
 \Big| n^H \rho_L \|g_{i,n}\|_{L^\beta(\R)}- \rho_0\Big| \leq 
 \begin{cases} C n^{-1} \qquad & \text{for }\alpha\in (0,k-2/\beta)\\
  C n^{(\alpha-k)\beta+1} \qquad & \text{for }\alpha\in (k-2/\beta,k-1/\beta). 
 \end{cases}
\end{equation}
Hence, for large enough $n$,
we obtain the estimate
\begin{equation}\label{sdlfjsdlhfs}
 \Big|\E[f(n^{H} \din X)] - \E[f(\rho_0 S)]\Big| \leq \Big(\max_{x\in [\rho_0 - \eps, \rho_0 + \eps]} |G'(x)| \Big)
\Big| n^H \rho_L \|g_{i,n}\|_{L^\beta(\R)}- \rho_0 \Big|,
\end{equation}
and by \eqref{sdlfjsdlhfs} and \eqref{dslfjsdlfjghs} it follows that 
\begin{equation}\label{sdlfjsldfjhss}
a_n \Big|\E[f(n^{H} \din X)] - \E[f(\rho_0 S)]\Big| \to 0\qquad \text{as }n\to \infty, 
\end{equation}
where $a_n= \sqrt{n}$ for Theorem~\ref{lsjdlfjssss}, $a_n= n^{k-\alpha-1/\beta}$ for Theorem~\ref{the2ndGenVar}(i), 
and $a_n = n^{1-\frac{1}{(k-\alpha)\beta}}$ for Theorem~\ref{the2ndGenVar}(ii). Eq.~\eqref{sdlfjsldfjhss} proves the above claim that we may replace $\E[f(\rho_0 S)]$ by 
$\E[ f(n^H\Delta^n_{i,k} X)]$ in Theorems~\ref{lsjdlfjssss} and \ref{the2ndGenVar}. 
%
%
%
%
%
 \qed
\end{remark}

By self-similarity of $L$, it holds that $\{n^H \Delta_{r,k}^n X\}_{r=k,...,n}\eqdist \{ Y^n_r\}_{r=k,...,n}$, and to deduce Theorems~\ref{lsjdlfjssss} and \ref{the2ndGenVar}  we show, cf.\ Remark~\ref{sdlfjslfj},  convergence in distribution for the properly normalised version of 
\begin{equation}\label{sdlfjsldfjhlsdhghdg}
S_n:=\sum_{r=k}^n \big(f(Y^n_r)-\E[f(Y^n_r)]\big)=\sum_{r=k}^n V_r^n,
\end{equation}
where we denoted $V_r^n:=f(Y^n_r)-\E[f(Y^n_r)]$ for brevity.


\subsection{Proof of Theorem~\ref{lsjdlfjssss}}


We recall the definition of $Y^n_r$ and $S_n$   from  \eqref{phiYDef} and \eqref{sdlfjsldfjhlsdhghdg}, and define additionally, for $a<b$, $a,b\in[0,\infty]$ and $m\geq 0$, 
\begin{align}
 Y^{n,[a,b]}_r={}&  \int_{r-b}^{r-a} \phi_r^n(s) \,dL_s,\qquad \qquad Y^{n,m}_r=Y^{n,[0,m]}_r,\\
 S_{n,m}= {}& \sum_{r=k}^n \big(f(Y^{n,m}_r)-\E[f(Y^{n,m}_r)]\big).
\end{align}
By \cite[Theorem 3.2]{Bil1999}, the statement of the theorem follows if we show the following three results
\begin{align}
&\lim_{m\to\infty} \limsup_{n\to\infty}\E[n^{-1}(S_n-S_{m,n})^2] =0,\label{S_{n,m}vsS_n}\\
&\frac 1 {\sqrt n} S_{n,m}\tol \mathcal N(0,\eta^2_m),\quad\text{ for some $\eta_m^2\in[0,\infty),$ and}\label{S_{n,m}Con}\\
&\eta^2_m\to \eta^2,\quad \text{as }m\to \infty. \label{etaCon}
\end{align} 
We show \eqref{S_{n,m}Con} first.
Set $\theta_j^{n,m}=\Cov(f(Y^{n,m}_k),f(Y^{n,m}_{k+j}))$ for $n\in\N\cup\{\infty\}$. 
Since the sequence $(Y_r^{n,m})_{r=k,...}$ is stationary 
the variance of $S_{n,m}$ is then given by
\[ n^{-1}\var(S_{n,m})=n^{-1}\bigg\{(n-k+1)\theta_0^{n,m}+2\sum_{j=1}^m (n-k-j)\theta_j^{n,m}\bigg\}.\] 
An application of  Lemma~\ref{dsklfhdskfhg} on $p=2$ yields 
that the covariances $\theta^{n,m}_j$ converge to $\theta^{\infty,m}_j$ for all $m,j$, as $n\to \infty$.
  Since the sequence $(Y^{n,m}_r)_{r=k,...}$ is $m$-dependent, \eqref{S_{n,m}Con} follows now from the central limit theorem for $m$-dependent sequences, see e.g.\ \cite{Ber1973}, with the limiting variance
\begin{equation}\label{eta_mDef}
\eta^2_m=\theta_0^{\infty,m}+2\sum_{j=1}^m \theta_j^{\infty,m}.
\end{equation}
Next, we argue that $\eta_m^2$ is a Cauchy sequence, which then shows \eqref{etaCon} with $\eta^2:=\lim_{m\to\infty}\eta_m^2$. This is indeed an immediate consequence of \eqref{S_{n,m}vsS_n} since
\begin{align*}
\Big||\eta_m|-|\eta_r|\Big|={}& \lim_{n\to\infty}n^{-1/2}\Big|\|S_{n,m}\|_{L^2}-\|S_{n,r}\|_{L^2}\Big|
\leq \limsup_{n\to\infty}n^{-1/2}\Big\|S_{n,m}-S_{n,r}\Big\|_{L^2}\\
\leq {}& \limsup_{n\to\infty}n^{-1/2}\Big\|S_{n,m}-S_{n}\Big\|_{L^2}+\limsup_{n\to\infty}n^{-1/2}\Big\|S_{n}-S_{n,r}\Big\|_{L^2}\to 0
\end{align*}
as $m,r\to \infty$ by \eqref{S_{n,m}vsS_n}. 
The proof of \eqref{clt} can thus be completed by deriving \eqref{S_{n,m}vsS_n}, which we do in the following.


We can express $S_n$ and $S_{n,m}$ as the telescoping sums
\begin{align}\label{ljsdfljsdflj}
S_n&=\sum_{r=k}^n \sum_{j=1}^\infty (\E[f(Y_r^n)|\G_{r-j+1}]-\E[f(Y_r^n)|\G_{r-j}]),\\
S_{n,m}&=\sum_{r=k}^n \sum_{j=1}^m (\E[f(Y_r^{n,m})|\G_{r-j+1}]-\E[f(Y_r^{n,m})|\G_{r-j}]).
\end{align}
Indeed, the first telescoping sum coincides with $S_n$ almost surely, since by the backwards martingale convergence theorem and Kolmogorov's 0-1 law it holds that $\E[f(Y_r^n)|\G_{r-j}]\toas \E[f(Y_r^n)]$, as $j\to\infty$.
We denote for $n\geq 1$ and $m,r,j\geq 0$
\begin{align*}
\xi^{n,m}_{r,j}=\E[f(Y_r^n)-f(Y_r^{n,m})|\G_{r-j+1}]-\E[f(Y_r^n)-f(Y_r^{n,m})|\G_{r-j}],
\end{align*}
and obtain
\begin{equation}\label{slfjsljf}
 S_n-S_{n,m}=\sum_{r=k}^n \sum_{j=1}^\infty \xi^{n,m}_{r,j}.
\end{equation}
Making the decomposition
\begin{align*}
&n^{-1}\E[(S_n-S_{n,m})^2]\\
&\leq 3n^{-1}\E\bigg[\bigg(\sum_{r=k}^n \sum_{j=m+1}^\infty \xi^{n,m}_{r,j}\bigg)^2\bigg]+3n^{-1}\E\bigg[\bigg(\sum_{r=k}^n \sum_{j=2}^{m} \xi^{n,m}_{r,j}\bigg)^2\bigg] +3n^{-1}\E\bigg[\bigg(\sum_{r=k}^n \xi^{n,m}_{r,1}\bigg)^2\bigg],
\end{align*}
we show that each summand on the right hand side converges to 0.
Observing that 
\[\Cov(\xi^{n,m}_{r,j},\xi^{n,m}_{r',j'})=0,\quad\text{unless }r-j=r'-j',\]
an application of Cauchy-Schwarz inequality and Fatou's lemma yields
\[ n^{-1}\E[(S_n-S_{n,m})^2]\leq 3n^{-1}Q_{n,1,m}+3n^{-1}Q_{n,2,m}+3n^{-1}Q_{n,3,m},\]
where
\begin{align*}
Q_{n,1,m}&=\sum_{r=k}^n\sum_{j=2}^m\sum^m_{j'=2} \E[(\xi^{n,m}_{r,j})^2]^{1/2}\E[(\xi^{n,m}_{r',j'})^2]^{1/2},\\
Q_{n,2,m}&=\sum_{r=k}^n\sum_{j=m+1}^\infty\sum^\infty_{j'=m+1} \E[(\xi^{n,m}_{r,j})^2]^{1/2} \E[(\xi^{n,m}_{r',j'})^2]^{1/2},\\
Q_{n,3,m}&=\sum_{r=k}^n \E[(\xi^{n,m}_{r,1})^2],
\end{align*}
and we denoted $r'=r-j+j'$. For the proof of \eqref{S_{n,m}vsS_n} it remains to show that 
\begin{align*}
\limsup_{n\to\infty}\frac 1 n Q_{n,i,m}\to 0,\quad \text{as }m\to\infty,\text{ for }i=1,2,3.
\end{align*}

{\it Estimation of $Q_{n,1,m}$:}
We introduce the notation
 \[\wt \Phi^n_j(x)=\E \big[f(x+Y^{n,j}_r)\big],\]
 which allows us to write $\E[f(Y^n_r)|\G_{r-j}]=\wt \Phi^n_j(Y_r^{n,[j,\infty]})$.
For $2\leq j\leq m$ we obtain
\begin{equation}\label{slfjshgshshs}
 \xi^{n,m}_{r,j}=\wt \Phi^n_{j-1}\big(Y_r^{n,[j-1,\infty]}\big)-\wt \Phi^n_{j}\big(Y_r^{n,[j,\infty]}\big)-\big\{\wt \Phi^n_{j-1}\big(Y_r^{n,[j-1,m]}\big)-\wt \Phi^n_{j}\big(Y_r^{n,[j,m]}\big)\big\}.
\end{equation}
The involved random variables can be decomposed into the sum of independent random variables as 
\begin{align*}
Y_r^{n,[j-1,\infty]}&=Y_r^{n,[j-1,j]}+Y_r^{n,[j,m]}+Y_r^{n,[m,\infty]}\\
Y_r^{n,[j,\infty]}&=Y_r^{n,[j,m]}+Y_r^{n,[m,\infty]}\\
Y_r^{n,[j-1,m]}&=Y_r^{n,[j-1,j]}+Y_r^{n,[j,m]}.
\end{align*}
Denoting by $F^n_{[j-1,j]},F^n_{[j,m]}$ and $F^n_{[m,\infty]}$ the corresponding distribution functions, we obtain
\begin{align*}
\E[(\xi^{n,m}_{r,j})^2]= &\int_\R\int_\R\int_\R \big\{\wt \Phi_{j-1}^n(u+v+w)-\wt \Phi_j^n(v+w)\\
&-\big(\wt \Phi_{j-1}^n(u+v)-\wt \Phi_j^n(v)\big)\big\}^2 dF^n_{[j-1,j]}(u)dF^n_{[j,m]}(v)dF^n_{[m,\infty]}(w).
\end{align*}
Using the relation $\wt \Phi_j^n(x)=\E f\big(x+Y_r^{n,j-1}+Y_r^{n,[j-1,j]}\big)=\int_\R \wt \Phi_{j-1}^n(x+z) dF^n_{[j-1,j]}(z),$
we obtain
\begin{align}\label{Ezetasquaredbound}
{}& \E[(\xi^{n,m}_{r,j})^2]=\int_\R\int_\R \int_\R \bigg(\int_\R D_{n,j}(u,v,w,z)dF^n_{[j-1,j]}(z)\bigg)^2dF^n_{[j-1,j]}(u)dF_{[j,m]}(v)dF^n_{[m,\infty]}(w)\nonumber\\
{}&\qquad  \leq \int_\R\int_\R \int_\R \int_\R D_{n,j}^2(u,v,w,z)dF^n_{[j-1,j]}(z)dF^n_{[j-1,j]}(u)dF^n_{[j,m]}(v)dF^n_{[m,\infty]}(w), 
\end{align}
where
\begin{align*}
D_{n,j}(u,v,w,z)&=\wt \Phi_{j-1}^n(u+v+w)-\wt \Phi_{j-1}^n(v+w+z)-\big(\wt \Phi_{j-1}^n(u+v)-\wt \Phi_{j-1}^n(v+z)\big)\\
&=\Phi_{\rho_{j-1}^n}(u+v+w)-\Phi_{\rho_{j-1}^n}(v+w+z)-\big( \Phi_{\rho_{j-1}^n}(u+v)- \Phi_{\rho_{j-1}^n}(v+z)\big),
\end{align*}
and  
 $\rho^n_{j-1}$ is the scale parameter of the \sbs\ random variable $Y^{n,j-1}_r$.
It follows from Lemma~\ref{DerBou} that $D_{n,j}$ satisfies the estimate 
\begin{align}\label{Dsquaredest}
D^2_{n,j}(u,v,w,z)\leq C \big(|u-z|^{2p}\wedge (u-z)^2\big)(|w|^{2p}\wedge w^2),\quad\text{for all $j\geq 2,n\in\N$,}\qquad 
\end{align}
where $p$ is as in \eqref{Est2}, provided $\{\rho_{j-1}^n\}_{j\geq 2,n\in\N}$ is bounded away from 0 and $\infty$. This is indeed the case, as 
follows from the estimates
\begin{align*}
(\rho^n_{j-1})^\beta&=  \int_{r-j+1}^r |\phi_r^n(s)|^\beta ds \ \leq\ \|\phi_r^n\|_{L^\beta(\R)}^\beta\ =\ \rho^n,\quad\text{and}\\
(\rho^n_{j-1})^\beta &\geq \int_{r-1}^r |\phi_r^n(s)|^\beta ds \ \to\  \int_0^1 s^{\alpha\beta}ds\ >\ 0\quad\text{ as }n\to\infty,
\end{align*}
where the convergence follows by the dominated convergence theorem, since Assumption~(A) implies the existence of a $C>0$ such that $|\phi_r^n(s)|\leq C |r-s|^\alpha$ for all $s\in[r-1,r]$ and all $n\geq 1$.

Applying \eqref{Dsquaredest} on the right hand side of \eqref{Ezetasquaredbound} yields the estimate
 \begin{align*}
& \E[(\xi^{n,m}_{r,j})^2]\\ &\quad  \leq C\bigg(\int_{\R^2}|u-z|^{2p}\wedge(u-z)^2\, dF^n_{[j-1,j]}(u)dF^n_{[j-1,j]}(z)\bigg)\int_{\R}|w|^{2p}\wedge w^2\, dF^n_{[m,\infty]}(w).
\end{align*}
It follows now from \eqref{scaparest} and \eqref{StaVarEst} that $\E[(\xi^{n,m}_{r,j})^2]\leq C(\rho^n_{[j-1,j]}\rho^n_{[m,\infty]})^\beta,$ where $\rho^n_{[j-1,j]}$ and $\rho^n_{[m,\infty]}$ are the scale parameters of the stable distributions $F^n_{[j-1,j]}$ and $F^n_{[m,\infty]}$, respectively.
By \eqref{ScaParInt} and \eqref{scaparest} the scale parameters satisfy $\rho^n_{[j-1,j]}=\rho_L \|\phi_j^n\|_{L^\beta([0,1])}\leq C j^{\alpha-k},$ and 
\begin{align}\label{rho^n_{[m,infty]}Est}
(\rho^n_{[m,\infty]})^\beta= \rho_L \int_{-\infty}^{r-m} |\phi_r^n(s)|^\beta ds =\rho_L \sum_{l=m+1}^\infty \|\phi_l^n\|^\beta_{L^\beta([0,1])}\leq C\sum_{l=m+1}^\infty l^{ \beta (\alpha-k)}.
\end{align}
It follows that
\[\E[(\xi^{n,m}_{r,j})^2]\leq Cj^{\beta(\alpha-k)}\sum_{l=m+1}^\infty l^{ \beta (\alpha-k)},\]
for all $j\in \{2,...,m\}$ and we obtain
\[\limsup_{n\to\infty}\frac 1 n Q_{n,1,m}\leq C \bigg(\sum_{j=2}^m j^{\frac \beta 2(\alpha-k)}\bigg)^2\bigg(\sum_{l=m+1}^\infty l^{ \beta (\alpha-k)}\bigg),\]
which converges to 0, as $m\to\infty$ since $\beta(\alpha-k)<-2.$

{\it Estimation of $Q_{n,2,m}$:}
This term is estimated by similar, and in fact easier, arguments as used for the estimation of $Q_{n,1,m}$ which we do not  repeat here.

{\it Estimation of $Q_{n,3,m}$:}
Using the inequality $\E\big\{\E[X|\G]-\E[Y|\mathcal F]\big\}^2\leq 2 \E X^2+2 \E Y^2$ we obtain
\[\frac 1 n Q_{n,3,m}\leq \frac 4 n\sum_{r=k}^n \E[(f(Y^n_r)-f(Y^{n,m}_r))^2] = \frac{n-k+1}n\E[(f(Y^n_1)-f(Y^{n,m}_1))^2],\]
and it is sufficient to argue that $\limsup_{n\to\infty} \E[(f(Y^n_1)-f(Y^{n,m}_1))^2]\to 0$ as $m\to\infty$. However, this follows by   Lemma~\ref{dsklfhdskfhg} with $p=2$, and completes the proof of \eqref{S_{n,m}vsS_n}, and thus of 
Theorem~\ref{lsjdlfjssss}.
\qed

\subsection{Proof of Theorem~\ref{the2ndGenVar}(i)}

In the following section we set for all $n\in \N$
\begin{equation}
\tilde S_n  = \Phi_{\rho^n}'(0) \sum_{r=k}^n Y^n_r, \qquad \text{where}\quad \Phi_\rho'(x)=\frac{\partial }{\partial x}\Phi_\rho(x).
\end{equation}
To prove Theorem~\ref{the2ndGenVar}(i), it is enough to show that the following \eqref{con-ssss-1} and \eqref{con-ssss-2}  hold, where 
\begin{align}\label{con-ssss-1}
{}& n^{k-\alpha-1/\beta-1}(S_n-\tilde S_n)\toop 0,\\ 
\label{con-ssss-2}{}& n^{k-\alpha-1/\beta-1}\tilde S_n\tol S\beta S(\sigma),
\end{align}
and  $\sigma$ is given in \eqref{ljsdfhhjljeri}. 

\noindent \emph{Proof of \eqref{con-ssss-1}}:
We let   $ \tilde f_\rho(x) = f(x) - \Phi'_\rho(0) x$,  and set 
\begin{equation}
\tilde \Phi_{\rho}(x) := \E[ \tilde f_\rho(x+S)] - \E[ \tilde f_\rho(S)] = \Phi_\rho(x)- \Phi'_\rho(0) x,
\end{equation}
 for all $x\in \R$,  $n\in \N$ and  $S\sim S\beta S(\rho)$. For all $\epsilon >0$ there exists $C>0$ such that $|\tilde \Phi_\rho'(x)|\leq C$ and $|\tilde \Phi_\rho''(x)|\leq C$ for all $x\in \R$ and $\rho\in [\epsilon, \epsilon^{-1}]$, and since
 $\tilde \Phi_\rho(0)=\tilde \Phi'_\rho(0)=0$ it follows that 
 \begin{equation}\label{dslfjsldhfoshdf}
 |\tilde \Phi_\rho(x)|\leq C_\epsilon (|x|\wedge |x|^2),
 \end{equation}
     which  will be crucial for the following estimates.  We set 
\begin{align}
\zeta^n_{r,j}{}& = \E[\tilde f_{\rho^n} (Y_r^n) |\G_{r-j+1}]-\E[\tilde f_{\rho^n}(Y_r^n)|\G_{r-j}]-
\E[\tilde f_{\rho^n}(Y_r^n) |\G_{r-j}^1]+\E[f(Y_r^n)], 
\end{align}
and decompose $S_n-\tilde S_n$ as follows
\begin{equation}\label{aljdfljsdf}
S_n-\tilde S_n = \sum_{r=k}^n \Big( \sum_{j=1}^\infty \zeta^n_{r,j} \Big) + \sum_{r=k}^n \Big( \sum_{j=1}^\infty \big(\E[\tilde f_{\rho^n}(Y_r^n) |\G_{r-j}^1] - \E[f(Y_r^n)]\big)\Big)=: V_n+W_n. 
\end{equation}
In the following we will estimate $W_n$ and $V_n$ separately. 

\noindent \emph{Estimation of $W_n$:}
By the  substitution $s=r-j$ we obtain the representation 
\begin{align}
W_n ={}&  \sum_{s=-\infty}^{n-1} \Big( \sum_{j=(k-s)\vee 1}^{n-s} \big(\E[\tilde f_{\rho^n}(Y_{s+j}^n) |\G_{s}^1] - \E[f(Y_r^n)]\big)\Big)=\sum_{s=-\infty}^{n-1} D^n_s, \qquad \text{where}\\
D^n_s:={}& \sum_{j=(k-s)\vee 1}^{n-s} \big(\E[\tilde f_{\rho^n}(Y_{s+j}^n) |\G_{s}^1] - \E[f(Y_{s+j}^n)]\big).
\end{align}
Since $\{D^n_s:s\in \Z\}$ is a martingale difference sequence, the von Bahr--Esseen inequality \cite[Theorem~1]{BahEss1965} yields  
that for any $\gamma\in (1,\beta)$ 
\begin{align}\label{sdflkjsdghd}
\E[ |W_n|^\gamma] \leq 2\sum_{s=-\infty}^{n-1}  \E[ | D^n_s|^\gamma] \leq 2\sum_{s=-\infty}^{n-1} \Big( \sum_{j=(k-s)\vee 1}^{n-s} \big(\E\big[ \big|\E[\tilde f_{\rho^n}(Y_{s+j}^n) |\G_{s}^1] - \E[ f(Y_{s+j}^n)]\big|^\gamma\big]\big)^{1/\gamma}\Big)^\gamma, 
\end{align}
where the second inequality follows by Minkowski's inequality. 
We have that 
\begin{align}
 |\Phi'_{\rho^n}(0) - \Phi'_{\rho^n_j}(0)|\leq C | \rho^n - \rho^n_j |  \leq C \Big|   |\rho^n|^\beta - |\rho^n_j|^\beta \Big| = C \| \phi^n_j \|_{L^\beta([0,1])}^\beta\leq C j^{\beta (\alpha-k)} \qquad \label{sdfkljsdflhs}
\end{align}
where the first inequality follows by boundedness of $\frac{\partial^2}{\partial \rho\partial x}\Phi_\rho(x)$, for the second inequality we use that $\rho^n, \rho^n_j$ are bounded away from 0 and $\infty$, cf.\ Lemma~\ref{lemrhobou}, and the last inequality is \eqref{scaparest}.
By a calculation similar to \eqref{slfjshgshshs} we obtain the identity   
\begin{equation}
\E[\tilde f_{\rho^n}(Y_{s+j}^n)|\G_{s}^1] - \E[ f(Y_{s+j}^n)] =  \Phi_{\rho^n_j}(U_{j+s,s}^n)- \E[  \Phi_{\rho^n_j}(U_{j+s,s}^n)]-  \Phi'_{\rho^n}(0)U^n_{j+s,s},
\end{equation}
and hence for all $r\in (1,2)$ with $r\gamma <\beta$, we have
\begin{align}
 &  \E\Big[ \Big|\E[\tilde f_{\rho^n}(Y_{s+j}^n)|\G_{s}^1] - \E[ f(Y_{s+j}^n)]\Big|^\gamma\Big] \\
 &\eqspace 
 \leq\ C \Big( \E[ | \tilde \Phi_{\rho^n_j}(U_{j+s,s}^n)|^\gamma] + |\Phi_{\rho^n}'(0)-\Phi'_{\rho_j^n}(0)|^\gamma \E[|U^n_{s+j,s}|^\gamma] \Big) \\
&\eqspace \leq\ C \Big(  \E [ | U_{j+s,s}^n |^{r \gamma}] + |\Phi_{\rho^n}'(0)-\Phi'_{\rho_j^n}(0)|^\gamma j^{\gamma(\alpha-k)} \Big)
\ \leq\  C \Big(  j^{\gamma r (\alpha-k)}+  j^{\gamma(\alpha-k)(1+\beta)} \Big)\\[-1em]
&\eqspace \leq\ C   j^{\gamma r (\alpha-k)}, 
 \label{sldfjshhshs} 
\end{align}
where the estimate  $|\tilde \Phi_{\rho^n_j}(x)|\leq C |x|^r$  is used in the second inequality (cf.\ \eqref{dslfjsldhfoshdf}), and \eqref{sdfkljsdflhs} is used in the third inequality. 
From  \eqref{sdflkjsdghd} and \eqref{sldfjshhshs} we deduce  
\begin{align}
 {}& \E[ |W_n|^\gamma] \leq  C\sum_{s=-\infty}^{n-1} \Big( \sum_{j= (k-s)\vee 1}^{n-s} j^{r(\alpha-k)}  \Big)^\gamma \\ 
{}& \quad =  C\Big(\sum_{s=-\infty}^{-n}  \Big( \sum_{j= k-s }^{n-s} j^{r(\alpha-k)}  \Big)^\gamma+\sum_{s=-n+1}^{k-2}  \Big( \sum_{j= k-s }^{n-s} j^{r(\alpha-k)}  \Big)^\gamma+\sum_{s=k-1}^n  \Big( \sum_{j= 1}^{n-s} j^{r(\alpha-k)}  \Big)^\gamma  \Big)\\ {}& \quad =: C\big( A_n'+A_n''+A_n'''\big). 
\end{align}
We may and do choose $r$ and $\beta$ such that $r(\alpha-k)\neq -1$ and $-\beta<r\gamma (\alpha-k)<-1$. Recall that $-\beta<\beta(\alpha-k)<-1$ by assumption, and $r,\gamma>1$ satisfies $r \gamma <\beta$.  We start by estimating $A_n'$ as follows 
\begin{equation}\label{est-a1}
A_n'\leq \sum_{s=n}^\infty (n s^{r(\alpha-k)})^\gamma \leq C n^{\gamma r (\alpha-k)+1+\gamma}
\end{equation}
where we have used $r\gamma(\alpha-k)<-1$ in the last inequality. 
By Jensen's inequality we have  \noeqref{est-a2}
\begin{equation}\label{est-a2}
A_n''\leq n^{\gamma -1} \sum_{s=1}^n   \Big( \sum_{j=k+s}^{n+s} j^{r \gamma (\alpha-k)} \Big) \leq C n^{\gamma-1} 
\sum_{s=1}^n s^{r\gamma (\alpha-k) +1}\leq C n^{r\gamma(\alpha-k)+\gamma+1}, 
\end{equation}
where we have used $r\gamma (\alpha-k)<-1$ in the second inequality, and $r\gamma (\alpha-k)>-2$ in the last inequality.  
For $\gamma<\beta$ close enough to $\beta$ we have that $r(\alpha-k)>-1$ for all $r\in (1,\beta/\gamma)$, by the assumption $\alpha>k-1$. The substitution $v=n-s$ yields that 
\begin{align}\label{est-a3}
A_n''' \leq C \sum_{v=1}^n \Big( \sum_{j= 1}^{v} j^{r(\alpha-k)}  \Big)^\gamma \leq 
C \sum_{v=1}^n v^{\gamma r(\alpha-k)+\gamma} \leq n^{\gamma r(\alpha-k)+\gamma+1}, 
\end{align}
where we  have used  $r(\alpha-k)>-1$ in the second inequality, and $\gamma r(\alpha-k)>-2$ in the last inequality.  
The above three estimates \eqref{est-a1}--\eqref{est-a3} show the bound 
\begin{equation}\label{sdlfjhsss}
 \E[ |W_n|^\gamma] \leq  C n^{\gamma r(\alpha-k)+\gamma+1}.
\end{equation} 

\noindent \emph{Estimation of $V_n$:}
By    the substitution $s=r-j$ we have that 
\begin{equation}
 V_n= \sum_{r=k}^n \Big( \sum_{j=1}^\infty \zeta^n_{r, j}\Big) =\sum_{s=-\infty}^{n-1} \Big( \sum_{r=k\vee (s+1)}^n \zeta^n_{r,r-s}\Big) = \sum_{s=-\infty}^{n-1} M^n_s, 
\end{equation}
where $M^n_s := \sum_{r=k\vee (s+1)}^n \zeta^n_{r,r-s}$.
 Since $(M_s^n)_{s\in \Z}$ is a martingale difference for all fixed $n\in \N$, we have by the von Bahr--Esseen inequality \cite[Theorem~1]{BahEss1965}  for all $\gamma\in [1,2]$ with $\gamma<\beta$ that   
\begin{align}\label{sldfjshghgg}
 \E[ |V_n|^\gamma] \leq  2 \sum_{s=-\infty}^{n-1} \E[ |M^n_s|^\gamma]
 \leq  \sum_{s=-\infty}^{n-1} \Big( \sum_{r=k\vee (s+1)}^n \|\zeta^n_{r,r-s}\|_\gamma \Big)^\gamma,
\end{align}
where  the last inequality follows from the Minkowski inequality.   
In the following we define the random variables  $\vartheta_{r,j,l}^n$, $l\geq j$,  by
\begin{align}\label{thetadefslt}
\vartheta_{r,j,l}^n 
={}& \E[\zeta_{r,j}^n\,|\,\G_{r-j}^1\vee \G_{r-l}]-\E[\zeta_{r,j}^n\,|\,\G_{r-j}^1\vee \G_{r-l-1}]\\
={}& \E[f(Y_r^n)\,|\,\G^1_{r-j}\vee\G_{r-l}]-\E[f(Y^n_r)\,|\,\G^1_{r-j}\vee \G_{r-l-1}]\nonumber\\
{}&  -\big\{\E\big[\E[f(Y_r^n)\,|\,\G_{r-j}]  \,|\,\G_{r-j}^1\vee \G_{r-l}\big]-\E\big[\E[f(Y_r^n)\,|\,\G_{r-j}]  \,|\,\G_{r-j}^1\vee \G_{r-l-1}\big]\big\}.\nonumber
\end{align}
By a  telescoping sum argument similar to \eqref{slfjsljf}, we obtain the representation 
\begin{equation}
\zeta^n_{r,j} = \sum_{l=j}^\infty \vartheta^n_{r,j,l}.
\end{equation}
Since $\{\vartheta_{r,j,l}^n \!: l = j, 2, \dots\}$ is a martingale difference sequence,  the von Bahr--Esseen inequality \cite[Theorem~1]{BahEss1965} yields that  
\begin{equation}\label{lhsgggsusw}
 \E[ | \zeta^n_{r,j}|^\gamma]\leq 2 \sum_{l=j}^\infty \E[ |\vartheta^n_{r,j,l}|^\gamma] 
 \leq C  \sum_{l=j}^\infty j^{(\alpha-k)\gamma} l^{(\alpha-k)\gamma}\leq C j^{2(\alpha-k)\gamma+1} 
\end{equation}
for all $\gamma\in (1,\beta)$ such that $(\alpha-k)\gamma<-1$, where we have used Lemma~\ref{lemthetaest} in the second inequality.  From \eqref{sldfjshghgg} and \eqref{lhsgggsusw}  we have  
\begin{align}
{}&\E[ |V_n|^\gamma] \leq  C \sum_{s=-\infty}^{n-1}\Big( \sum_{r=k\vee (s+1)}^n (r-s)^{2(\alpha-k)+1/\gamma} \Big)^\gamma 
\\ {}& \quad  =  C\Big\{\sum_{s=-\infty}^{-n}\Big( \sum_{r=k}^n (r-s)^{2(\alpha-k)+1/\gamma} \Big)^\gamma +\sum_{s=-n+1}^{k-1}\Big( \sum_{r=k}^n (r-s)^{2(\alpha-k)+1/\gamma} \Big)^\gamma \\ {}& \quad \phantom{ =  C\Big\{} +\sum_{s=k}^{n-1}\Big( \sum_{r= s+1}^n (r-s)^{2(\alpha-k)+1/\gamma} \Big)^\gamma \Big\} 
=:\{B_n'+B_n''+B_n'''\}.
\end{align}
We estimate $B_1', B_n''$ and $B_n'''$ in a similar fashion  as in \eqref{est-a1}--\eqref{est-a3}, but need to  divide into 
several  cases depending on the value of $\gamma (\alpha-k)$. We arrive with the following estimates 
\begin{align}
B_n'\leq {}&  C n^{2 \gamma (\alpha-k)+\gamma+2}, \qquad \qquad B_n''\leq  
\begin{cases}C n^{2 \gamma (\alpha-k)+\gamma+2} \qquad  & \text{for } \gamma(\alpha-k)>-3/2,\\ C n^{\gamma-1}  & \text{for }\gamma(\alpha-k)<-3/2,\end{cases} \\ 
B_n'''\leq {}& \begin{cases} Cn^{2 \gamma (\alpha-k)+\gamma+2}\qquad    & \text{for }\gamma(\alpha-k)>(-1-\gamma)/2, \\ C n& \text{for }\gamma(\alpha-k)<(-1-\gamma)/2, \end{cases}
\end{align}
which implies  
\begin{equation}\label{sdlfhsdfhs}
 \E[ |V_n|^\gamma] \leq  C \Big(n^{2 \gamma (\alpha-k)+\gamma+2}+n\Big). 
\end{equation}
Combining the   estimates \eqref{aljdfljsdf}, \eqref{sdlfjhsss} and \eqref{sdlfhsdfhs} yields   
\begin{equation}\label{sldfjlsdhhsdh}
  \E\Big[\Big| n^{k-\alpha-1/\beta-1}(S_n-\tilde S_n)\Big|^\gamma\Big] \leq C \Big( n^{-\gamma/\beta+\gamma (r-1)(\alpha-k)+1}+ n^{-\gamma/\beta+ \gamma (\alpha-k)+2}+n^{\gamma(k-\alpha-1/\beta - 1)+1} \Big).
\end{equation}
The  three terms on the right-hand side of \eqref{sldfjlsdhhsdh} converge to zero as $n\to\infty$. Indeed, it follows that the first term converges to zero, by  choosing   $\gamma\in (1,\beta)$ close enough to $\beta$  and  then choose
$r\in (1,\beta/\gamma)$ close enough to $\beta/\gamma$, which can be done under the above restrictions on $r$ and $\gamma$. The second term converges to zero due to the assumption $\gamma(\alpha-k)<-1$ and the third term converges to 0 for $\gamma$ close enough to $\beta$ by the assumption $\alpha>k-1$. Hence, \eqref{sldfjlsdhhsdh} completes the proof of \eqref{con-ssss-1}. 

\noindent \emph{Proof of \eqref{con-ssss-2}}:  
In the following  we  write $g_{i,n, k}$ for $g_{i,n}$, given in  \eqref{gindef}, to stress the dependence of the order of increments $k\geq 1$. We have 
\begin{equation}\label{ljshdfss}
n^{k-\alpha-1/\beta-1}\tilde S_n\eqdist \Phi'_{\rho^n}(0) n^{k-1}\sum_{r=k}^n \Delta^n_{r,k} X = \Phi'_{\rho^n}(0) n^{k-1} \Big( \Delta^{n}_{n,k-1} X - \Delta^{n}_{k-1,k-1}X\Big),
\end{equation}
where the last equality follows by the telescoping sum structure. 
According to  the mean value theorem there exists  $\theta_1,\theta_2\in [-k/n,0]$ (depending on $n$ and $s$)  such that 
\begin{align}
  \Big| n^{k-1} \Big( g_{n,n, k-1}(s) - g_{k-1,n, k-1}(s)\Big)\Big|  {}& \leq C \Big|  g^{(k-1)}(1-s+\theta_1) -g^{(k-1)}(-s+\theta_2)\Big| \\ {}& 
\leq C \Big( \1_{\{|s|\leq 1\}}+ \1_{\{s<-1\}} |s|^{\alpha-k}  \Big)=:c(s),
\label{lsjdflhsh} 
\end{align}
where the last inequality follows by Assumption (A2) and the mean value theorem for $s<-1$, and by the assumption $\alpha>k-1$ for the case $|s|\leq 1$.
The function $c$ in  \eqref{lsjdflhsh} is in $L^\beta(ds)$, due to the fact that 
$\alpha<k-1/\beta$.
Hence, by the dominated convergence theorem, we have 
\begin{equation}\label{lsdjfhhss}
\int_\R \Big| n^{k-1} \Big( g_{n,n, k-1}(s) - g_{k-1,n, k-1}(s)\Big) \Big|^\beta\,ds\to \int_\R \Big| g^{(k-1)}(1-s)- g^{(k-1)}(-s)\Big|^\beta\,ds =: c_0<\infty. 
\end{equation}
as $n\to \infty$.
By \cite[Lemma~5.3]{BasLacPod2016}, $\rho^n\to \rho^\infty$ which implies that $\Phi'_{\rho^n}(0)\to \Phi'_{\rho^\infty}(0)$ by continuity of $\rho\mapsto \Phi'_\rho(0)$ on $(0,\infty)$. 
 Therefore, by   \eqref{ljshdfss} and \eqref{lsdjfhhss} we conclude that 
\begin{equation}\label{ljsdfhhjljeri}
n^{k-\alpha-1/\beta-1}\tilde S_n \schw S\beta S(\sigma)\qquad \textrm{with } \sigma := \rho_L \Phi'_{\rho^\infty}(0)c_0^{1/\beta},
\end{equation}
which completes the proof of Theorem~\ref{the2ndGenVar}(i).

\subsection{Proof of Theorem~\ref{the2ndGenVar}(ii)}


Before we start the proof of Theorem~\ref{the2ndGenVar}(ii) we will deduce some estimates on $\Phi_\rho(x)$ relying on the assumption of Appell rank greater or equal 2 in this theorem. 
Let $\epsilon\in (0,1)$ be fixed. The mean value theorem, together with assumptions \eqref{Est2} and  \eqref{Est3} and  the  Appell rank greater or equal two condition, $\frac{\partial }{\partial x}\Phi_\rho(0)=0$ for all $\rho>0$,   implies that 
\begin{equation}\label{klsjhdfkhsdklfhhd}
|\Phi_\rho(x)-\Phi_\rho(y)| \leq C\Big( (1\wedge |x| +1\wedge |y|)|x-y|\1_{\{|x-y|\leq 1\}}+|x-y|^p\1_{\{|x-y|>1\}}\Big)
\end{equation}
for all $x, y\in \R$ and $\rho\in [\epsilon,\epsilon^{-1}]$. Specializing  \eqref{klsjhdfkhsdklfhhd} to $y=0$  yields that \begin{equation}\label{sljsdfljs}
|\Phi_\rho(x)|\leq C (|x|^p\wedge |x|^2), \qquad x\in \R,\  \rho\in [\epsilon,\epsilon^{-1}].
\end{equation}
 Next let $x\in \R$ and $\rho_1, \rho_2\in [\epsilon,\epsilon^{-1}]$.  
From an application of the mean value theorem in the $\rho$ variable it follows that there exists $\tilde \rho\in [\epsilon,\epsilon^{-1}]$ such that 
\begin{align}\label{ljsdlfhskldhfsd}
 |\Phi_{\rho_1}(x)-\Phi_{\rho_2}(x)| \leq {}&C | \rho_1-\rho_2| \cdot\Big|\frac{\partial }{\partial \rho}\Phi_{\tilde \rho}(x)\Big| 
\\   \leq {}&C  | \rho_1-\rho_2| \Big(1\wedge |x|^2\Big)\leq C  | \rho_1-\rho_2| \Big(|x|^p\wedge |x|^2\Big)
\end{align}
where in the second inequality we  use  that    $|\frac{\partial^3}{\partial x^2\partial \rho}\Phi_{\rho}(x)|\leq C$,  $|\frac{\partial}{\partial \rho}\Phi_{\rho}(x)|\leq C$,  $\frac{\partial^2}{\partial x\partial \rho}\Phi_{\rho}(0)=0$ and $\frac{\partial}{\partial \rho}\Phi_{\rho}(0)=0$;  the latter fact follows since $\Phi_\rho(0)=0$ for all $\rho>0$.

For all $r\geq k$ we define $Z_r$ by 
\begin{align}\label{ZDef}
Z_r:=\sum_{j=1}^\infty \big\{ \Phi_{\rho_j^\infty} (U_{j+r,r}^\infty)-\E[\Phi_{\rho_j^\infty} (U_{j+r,r}^\infty)]\big\},
\end{align}
where the sum is almost surely absolutely convergent. Indeed, this fact follows by the same arguments as in  \cite[(5.19) b]{BasLacPod2016}, where this statement is derived in the context of power variation (the proof relies on 
the estimate \eqref{sljsdfljs}).   Since for all $j\geq 0$ the sequence $(U^\infty_{j+r,r})_{r\geq k}$ is i.i.d., the random variables $Z_r, r\geq k$ are i.i.d.\,as well. 
For $n\geq 1, m, r\geq 0$ we denote
\begin{align}
\zeta_{r,j}^n&:=\E[V_r^n|\mathcal F_{r-j+1}]-\E[V_r^n|\mathcal F_{r-j}]-\E[V_r^n|\mathcal F^1_{r-j}],\nonumber\\
R_r^n &:=\sum_{j=1}^\infty \zeta_{r,j}^n\qquad\text{and}\qquad Q_r^n:= \sum_{j=1}^\infty \E[V_r^n\,|\, \G_{r-j}^1].\label{RQDefslt}
\end{align}
The sums $R_r^n$ and $Q_r^n$ converge almost surely, which follows by the arguments of  \cite[(5.21)]{BasLacPod2016} and thereafter. 
We obtain the following important decomposition 
\begin{align}\label{maindecomp}
S_n=\sum_{r=k}^n R_r^n + \sum_{r=k}^n (Q_r^n-Z_r)+\sum_{r=k}^n Z_r,
\end{align}
where we will argue  that the first two sums in \eqref{maindecomp} are negligible.
In order to derive
\begin{align}\label{Restslt}
n^{\frac 1 {(\alpha-k)\beta}}\sum_{r=k}^n R_r^n \toop 0,
\end{align}
we may argue along the lines of the proof of (5.22) in \cite[Proposition 5.2]{BasLacPod2016} where this statement is derived in the context of power variation (note that $R_r^n$ corresponds to $R_r^{n,0}$ in their notation). Key to the proof is the estimate \cite[Lemma 5.7]{BasLacPod2016}, which we generalize to our setting in Lemma~\ref{lemthetaest}.  Similarly, we obtain  
\begin{align}\label{Questslt}
n^{\frac 1 {(\alpha-k)\beta}}\sum_{r=k}^n (Q_r^n-Z_r) \toop 0
\end{align}
by arguing along the lines of the proof of (5.24) in \cite[Proposition 5.2]{BasLacPod2016}. The proof relies on the estimates \eqref{StaVarEst}--\eqref{scaparjest} and \cite[Eq.~(5.15), (5.18)]{BasLacPod2016}, as well as on Lemma~\ref{lemEstslt}. 
The estimate \cite[Eq.~(5.15)]{BasLacPod2016} is in our context replaced by \eqref{klsjhdfkhsdklfhhd}, where we need to argue that for sufficiently large $N$ the set $\{\rho_{j}^n\,:\, n\in\{N,...,\infty\},j\in\N\}$ is bounded away from 0 and $\infty$, which is done in Lemma~\ref{lemrhobou}. 

It therefore remains to show that $Z_r$ is in the domain of attraction of a $(k-\alpha)\beta$-stable random variable, which we do in two steps. First we define the random variable\vspace{-1ex}
\[Q:=\ol\Phi(L_{k+1}-L_{k})-\E[\ol\Phi(L_{k+1}-L_{k})],\quad\text{where }\quad \ol \Phi(x):=\sum_{j=1}^\infty \Phi_{\rho_j^\infty}(\phi_j^\infty(0)x)\]
and show that it is in the domain of attraction of a $(k-\alpha)\beta$-stable random variable $S$.
Thereafter we argue that we can find $r>(k-\alpha)\beta$ such that 
\begin{align}\label{ZQDifslt}
\P(|Z_k-Q|>x)\leq Cx^{-r},\quad \text{for all }x\geq 1,
\end{align}
which yields  that $Z_k$ is in the domain of attraction of $S$ as well, and an application of \cite[Theorem 1.8.1]{SamTaq1994} concludes the proof. 

Let us first remark that the function $\ol\Phi$ and the random variable $Q$ are well-defined. Indeed, since $\rho_j^\infty\to\rho^\infty$, the set $\{\rho^\infty_j\}_{j\in\N}$ is bounded away from 0 and $\infty$ and by \eqref{sljsdfljs} it follows for any $\gamma\in (p,\beta)$ that 
\begin{equation}
 \sum_{j=1}^\infty |\Phi_{\rho_j^\infty}(\phi_j^\infty(0)x)|\leq C \sum_{j=1}^\infty |\phi_j^\infty(0)x|^\gamma\leq C |x|^\gamma\sum_{j=1}^\infty j^{\gamma(\alpha-k)}.
\end{equation}
By choosing $\gamma>1/(k-\alpha)$ it follows that $\ol\Phi$ and $Q$ are well-defined. 
Moreover, an application of the dominated convergence theorem shows that $\ol\Phi$ is continuous.
In order to show that $Q$ is in the domain of attraction of a $(k-\alpha)\beta$-stable random variable we next determine constants $c_-,c_+$ such that
\begin{equation}\label{slfjsdhfsdfs}
 \lim_{x\to \infty} x^{(k-\alpha)\beta}\P(Q<-x)=c_-,\quad \lim_{x\to \infty} x^{(k-\alpha)\beta}\P(Q>x)=c_+.
\end{equation}
From \eqref{slfjsdhfsdfs} it follows by \cite[Theorem 1.8.1]{SamTaq1994} that $Q$ is in the domain of attraction of a $(k-\alpha)\beta$-stable with scale parameter $\rho_1$ and skewness parameter $\eta_1$, given by
\begin{align}\label{ScaSkeParslt}
\rho_1:= \bigg(\frac{c_++c_-}{\tau_{(k-\alpha)\beta}}\bigg)^{1/(k-\alpha)\beta},\qquad\text{and}\qquad\eta_1:=\frac{c_+-c_-}{c_++c_-}.
\end{align}
Here the constant $\tau_\gamma$,  $\gamma\in(0,2)$, is  defined as
\begin{align}\label{tauDefslt}
\tau_\gamma:=
\begin{cases}
\frac {1-\gamma}{\Gamma(2-\gamma) \cos(\pi\gamma/2)}&\text{if }\gamma\neq 1,\\
\pi/2 & \text{if }\gamma=1.
\end{cases}
\end{align}
In the following we derive explicit expressions for $c_+$ and $c_-$, which are stated in \eqref{TaiProPosslt} and \eqref{TaiProNegslt} below.
For $x>0$ it holds by substituting $t=(x/u)^{1/(k-\alpha)}$ that
\begin{align}
x^{1/(\alpha-k)}\ol \Phi(x)&=x^{1/(\alpha-k)}\int_0^\infty\Phi_{\rho_{1+[t]}^\infty}(\phi^\infty_{1+[t]}(0)x)\,dt\nonumber\\
&=\frac 1 {k-\alpha} \int_0^\infty\Phi_{\rho_{1+[(x/u)^{1/(k-\alpha)}]}^\infty}\big(\phi^\infty_{1+[(x/u)^{1/(k-\alpha)}]}(0)x\big)u^{-1+1/(\alpha-k)}\,du\qquad \label{dsklfjslkdghfgsfd}\\
&\to\frac 1 {k-\alpha} \int_0^\infty\Phi_{\rho^\infty}(k_\alpha u)u^{-1+1/(\alpha-k)}\,du := \kappa_+,\quad \text{as $x\to\infty$},\label{kappa+slt}
\end{align}
where $k_\alpha=\alpha(\alpha-1)\dots(\alpha-k+1).$ The convergence as well as the existence of the integral follow from the estimate \eqref{sljsdfljs} and the dominated convergence theorem, where we use that $\{\rho_j^\infty\}$ is bounded away from 0 and $\infty$.
The convergence of the integrand from \eqref{dsklfjslkdghfgsfd} as $x\to\infty$ follows since by the mean value theorem for all $t\in\R$ there is a $\xi_t\in[t-k-1,t]$ such that 
\[\phi^\infty_{[t]}(0)=h_k([t])=k_\alpha (\xi_t)_+^{\alpha-k},\]
which implies the convergence
\[\phi^\infty_{1+[(x/u)^{1/(k-\alpha)}]}(0)x\to k_\alpha u,\quad\text{as $x\to\infty.$}\]
Similarly we obtain for $x<0$ that
\begin{align}\label{kappa-slt}
|x|^{1/(\alpha-k)}\ol \Phi(x)
&\to\frac 1 {k-\alpha} \int_{-\infty}^0\Phi_{\rho^\infty}(k_\alpha u)|u|^{-1+1/(\alpha-k)}\,du := \kappa_-,\quad \text{as $x\to-\infty$}.
\end{align}
We argue next that
\begin{align}\label{TaiProPosslt}
\lim_{x\to\infty} x^{(k-\alpha)\beta}\P(Q>x)&=\tau_\beta\rho_L\big(\kappa^{k-\alpha}_+ \mathds 1_{\{\kappa_+>0\}}+\kappa^{k-\alpha}_- \mathds 1_{\{\kappa_->0\}}\big):=c_+,
\end{align}
where $\tau_\beta$ was defined in \eqref{tauDefslt} and $\rho_L$ denotes the scale parameter 
of the L\'evy process $L$. To this end we make the decomposition
\begin{align}\label{ProbabilityDecompPosNeg}
\P(Q>x)= \P(Q>x,L_{k+1}-L_k>0)+\P(Q>x,L_{k+1}-L_k<0),
\end{align}
and analyse the two summands separately. Consider the first summand and assume $\kappa_+>0$. By \eqref{kappa+slt} it follows that $\ol\Phi(y)\to\infty$ as $y\to\infty$ and we have for sufficiently large $x$ that
\[\P(\ol\Phi(L_{k+1}-L_{k})>x,L_{k+1}-L_k>0)=\P(|\ol\Phi(L_{k+1}-L_{k})|>x,L_{k+1}-L_k>0).\]
Applying Lemma \ref{lemAsyEquSta} with $\xi(x)=\ol\Phi(x)$ and $\psi(x)=x^{1/(k-\alpha)}\kappa_+$, we deduce from \eqref{kappa+slt} that
\begin{align*}
\lim_{x\to\infty} x^{(k-\alpha)\beta}\P(Q>x,L_{k+1}-L_{k}>0)&=\lim_{x\to\infty} x^{(k-\alpha)\beta}\P\big(\kappa_+^{k-\alpha}(L_{k+1}-L_{k}) >x^{k-\alpha}\big)\\
&=\tau_\beta \rho_L^\beta\kappa_+^{(k-\alpha)\beta},
\end{align*}
where the second identity follows from \cite[Property 1.2.15]{SamTaq1994}.
If $\kappa_+<0,$ it follows from \eqref{kappa+slt} that $\limsup_{x\to\infty}\ol\Phi(x)\leq 0$ and therefore that $\ol\Phi(x)$ is bounded for $x\geq 0$. We obtain 
\[\lim_{x\to\infty} x^{(k-\alpha)\beta}\P(Q>x,L_{k+1}-L_{k}>0)=0.\]
The same identity holds for $\kappa_+=0$, as follows from Lemma \ref{lemAsyEquSta}, \eqref{kappa+slt}, and the estimate
\[\P(\ol\Phi(L_{k+1}-L_{k})>x,L_{k+1}-L_k>0)\leq \P(|\ol\Phi(L_{k+1}-L_{k})|>x, L_{k+1}-L_k>0).\]
We conclude that the first summand of \eqref{ProbabilityDecompPosNeg} satisfies
\begin{align*}
\lim_{x\to\infty} x^{(k-\alpha)\beta}\P(Q>x,L_{k+1}-L_{k}>0)&=\tau_\beta\rho_L \kappa^{k-\alpha}_+ \mathds 1_{\{\kappa_+>0\}}.
\end{align*}
By similar arguments, applying Lemma~\ref{lemAsyEquSta} on the function $\xi(x)= \ol \Phi(-x)$ and using \eqref{kappa-slt}, we obtain for the second summand of \eqref{ProbabilityDecompPosNeg} the convergence
\begin{align*}
\lim_{x\to\infty} x^{(k-\alpha)\beta}\P(Q>x,L_{k+1}-L_{k}<0)&=\tau_\beta\rho_L \kappa^{k-\alpha}_- \mathds 1_{\{\kappa_->0\}},
\end{align*}
which completes the proof of \eqref{TaiProPosslt}. Arguing similarly for $\P(Q<-x)$ we derive that
\begin{align}\label{TaiProNegslt}
\lim_{x\to\infty} x^{(k-\alpha)\beta}\P(Q<-x)&=\tau_\beta\rho_L\big(|\kappa_+|^{k-\alpha} \mathds 1_{\{\kappa_+<0\}}+|\kappa_-|^{k-\alpha} \mathds 1_{\{\kappa_-<0\}}\big) := c_-.
\end{align}
This shows that $Q$ is in the domain of attraction of a  $(k-\alpha)\beta$-stable random variable with location parameter 0, and scale and skewness parameters as given in \eqref{ScaSkeParslt}.

Now the proof of the theorem is completed by showing \eqref{ZQDifslt}. To this end it is by Markov's inequality sufficient to show that $\E[|Z_k-Q|^r]<\infty$ for some $r>(k-\alpha)\beta.$
Since $(k-\alpha)\beta>1$ an application of Minkowski's inequality yields
\begin{align}\label{LrDisZQslt}
\|Z_k-Q\|_r\leq \sum_{j=1}^\infty\big\|\Phi_{\rho_j^\infty}(U^\infty_{j+k,k})-\Phi_{\rho_j^\infty}\big(\phi_{j}^\infty (0)(L_{k+1}-L_{k})\big)\big\|_r.
\end{align}
We remark that by the mean value theorem there exists a constant $C>0$ such that for all $x\in[0,1]$ and $j\in\N$ it holds that
\[|\phi_{j+k}^\infty(x)-\phi_{j}^\infty(0)|=|h_k(j+k+x)-h_k(j)|\leq Cj^{\alpha-k-1}.\]
Since $\{\rho_j^\infty\}_{j\in\N}$ is bounded away from 0, there is a $\delta>0$ with $\delta<\rho_j^\infty$ for all $j$. Letting $r_\eps=(k-\alpha)\beta+\eps$ with $\eps\in(0,\delta)$, an application of Lemma \ref{lemEstslt} yields
\begin{align}
&\big\|\Phi_{\rho_j^\infty}(U^\infty_{j+k,k})-\Phi_{\rho_j^\infty}(\phi_{j}^\infty (0)(L_{k+1}-L_{k}))\big\|_{r_\eps}\\
&\leq C\big(\|\phi_{j+k}^\infty-\phi_j^\infty(0)\|^{1-\eps}_{L^\beta([0,1])}+\|\phi_{j+k}^\infty-\phi_j^\infty(0)\|^{\frac{1}{k-\alpha+\eps/\beta}}_{L^\beta([0,1])}\big)\leq C (j^{(\alpha-k-1)(1-\eps)}+j^{\frac{\alpha-k-1}{k-\alpha+\eps/\beta}}).\label{dslfjsdlfj}
\end{align} 
For sufficiently small $\eps>0$, both powers of $j$ on the right-hand side of \eqref{dslfjsdlfj} are smaller than $-1$, which together with \eqref{LrDisZQslt} implies $\|Z_k-Q\|_r<\infty$, and thus \eqref{ZQDifslt}. Since $Q$ is in the domain of attraction of a $(k-\alpha)\beta$-stable random variable with scale parameter $\rho_1$ and skewness parameter $\eta_1$, and $r>(k-\alpha)\beta$, so is $Z_k$. This completes the proof of Theorem \ref{the2ndGenVar}(ii).\qed

\section{Auxiliary results}\label{lsjdfghsdfoghs}


In this section we show some technical results used in the proofs of Theorems~\ref{lsjdlfjssss} and \ref{the2ndGenVar}.

\begin{lem}\label{DerBou}
 Let $p$ be as in \eqref{Est2}. For any $\eps>0$ there exists a finite constant $C_\eps>0$ such that for all $\rho\in[ \eps,\eps^{-1}]$,  $a\in\R$ and  $x,y>0$ we have that
 \[F(a,x,y):=\bigg|\int_{0}^y\int_0^x \Phi''_\rho(a+u+v)\diff u\diff v\bigg|\leq C(x^p\wedge x)(y^p\wedge y).\]
 \end{lem}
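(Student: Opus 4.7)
My plan is to reduce $F(a,x,y)$ to an explicit alternating difference of $\Phi_\rho$ values by applying the fundamental theorem of calculus in each variable. Specifically, setting $\Psi(v):=\int_0^x\Phi''_\rho(a+u+v)\,du$, Fubini and antidifferentiation in $u$ give $\Psi(v)=\Phi'_\rho(a+x+v)-\Phi'_\rho(a+v)$, and a second antidifferentiation in $v$ yields
\[
\int_0^y\!\!\int_0^x\Phi''_\rho(a+u+v)\,du\,dv\;=\;\Phi_\rho(a+x+y)-\Phi_\rho(a+x)-\Phi_\rho(a+y)+\Phi_\rho(a).
\]
All subsequent estimates will come from this identity and the uniform bounds supplied by Assumption~(B) for $\rho\in[\eps,\eps^{-1}]$.

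Next I would collect three elementary bounds on $|F(a,x,y)|$ valid for all $x,y>0$. First, bounding $|\Phi''_\rho|\le C_\eps$ inside the double integral gives $|F|\le C_\eps xy$. Second, bounding $|\Phi'_\rho|\le C_\eps$ after performing a single integration — either writing $F=\int_0^x[\Phi'_\rho(a+u+y)-\Phi'_\rho(a+u)]\,du$ or the symmetric version — gives $|F|\le 2C_\eps x$ and $|F|\le 2C_\eps y$. Third, using the $p$-H\"older assumption \eqref{Est2} on the four-term expression above pair by pair, as in $\bigl|\Phi_\rho(a+x+y)-\Phi_\rho(a+y)\bigr|\le C_\eps x^p$ and $\bigl|\Phi_\rho(a+x)-\Phi_\rho(a)\bigr|\le C_\eps x^p$, gives $|F|\le 2C_\eps x^p$ and, symmetrically, $|F|\le 2C_\eps y^p$.

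The proof is then finished by splitting into the four cases given by whether $x$ and $y$ are $\le 1$ or $>1$, using that $p\in[0,1]$ implies $x^p\wedge x=x$ for $x\le 1$ and $x^p\wedge x=x^p$ for $x>1$ (and analogously for $y$). In the case $x,y\le 1$ the first bound $|F|\le C_\eps xy$ is exactly the desired $C_\eps(x^p\wedge x)(y^p\wedge y)$. In the case $x\le 1<y$, the bound $|F|\le 2C_\eps x$ already gives what is needed since $y^p\ge 1$, so $2C_\eps x\le 2C_\eps x\,y^p$; the case $y\le 1<x$ is symmetric. Finally, for $x,y>1$ the H\"older bound $|F|\le 2C_\eps x^p$ combined with $y^p\ge 1$ gives $|F|\le 2C_\eps x^p y^p$. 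No step is really an obstacle — the only point to be slightly careful about is verifying in each of the four regions that the factors $\ge 1$ that I throw away are indeed $\ge 1$, which is immediate from $p\in[0,1]$ and the case assumptions.
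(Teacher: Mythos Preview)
Your proof is correct and follows essentially the same approach as the paper's: antidifferentiate to obtain the four-term second difference of $\Phi_\rho$, then combine the bounds $|F|\le Cxy$, $|F|\le Cx$, $|F|\le Cy$, $|F|\le Cx^p$, $|F|\le Cy^p$ coming from boundedness of $\Phi''_\rho$, $\Phi'_\rho$ and the H\"older estimate \eqref{Est2}, and split into the four regions determined by $x\lessgtr 1$, $y\lessgtr 1$. The paper presents the case split via indicator functions rather than an explicit enumeration, but the logic is identical.
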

 \begin{proof}
Let us first remark that $x^p\wedge x= x\mathds 1_{\{x\leq 1\}}+x^p\mathds 1_{\{x>1\}}$ since $p<1$.
By assumption, $\Phi_\rho'(x)$ and $\Phi_\rho''(x)$ are uniformly bounded for $\rho\in [\eps,\eps^{-1}]$ and $x\in\R$.
Boundedness of $\Phi_\rho''$ immediately implies $F(a,x,y)\leq Cxy.$ Moreover, it holds that
\begin{align*}
\int_{0}^y \int_0^x \Phi''_\rho(a+u+v)\diff u\diff v&= \int_{0}^y  \Phi'_\rho(a+x+v)-\Phi'_\rho(a+v)\diff v\\
&=\big(\Phi_\rho(a+x+y)-\Phi_\rho(a+y)\big)-\big(\Phi_\rho(a+x)-\Phi_\rho(a)\big).
\end{align*}
The first equality and boundedness of $\Phi'_\rho$ implies $F(a,x,y)\leq Cy,$ and consequently $F(a,x,y)\mathds 1_{\{x>1\}}\leq Cx^py\mathds 1_{\{x>1\}},$
 and similarly $F(a,x,y)\mathds 1_{\{y>1\}}\leq Cxy^p\mathds 1_{\{y>1\}}.$ Finally, the second equality together with \eqref{Est2} implies that  $F(a,x,y)\mathds 1_{\{x>1,y>1\}}\leq C x^p \mathds 1_{\{x>1,y>1\}}\leq C x^p y^p\mathds 1_{\{x>1,y>1\}},$ completing the proof.
\end{proof}

\begin{lem}\label{lemthetaest}
For all $\gamma\in[1,2]$ there exists a $C>0$ such that for all $n\in\N$, $r\in\{k,\dots, n\}$, $j\in\N$ and $l\geq j$ it holds that
\[\E[|\vartheta_{r,j,l}^n|^\gamma]\leq 
\begin{cases}
Cj^{(\alpha-k)\beta} l^{(\alpha-k)\beta} \qquad & \text{for }\beta<\gamma<\beta/p,\\
Cj^{(\alpha-k)\gamma} l^{(\alpha-k)\gamma} & \text{for }\gamma<\beta, 
\end{cases}
\]
where $\vartheta_{r,j,l}^n$ was defined in \eqref{thetadefslt}.
\end{lem}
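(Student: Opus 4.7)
The strategy will be to represent $\vartheta_{r,j,l}^n$ as a doubly centered functional of two independent stable variables and then combine Lemma~\ref{DerBou} with the moment estimate \eqref{StaVarEst}. I would first split the integration range of $Y_r^n=\int_{-\infty}^r \phi_r^n(s)\,dL_s$ into the five disjoint pieces $(-\infty,r-l-1]$, $(r-l-1,r-l]$, $(r-l,r-j]$, $(r-j,r-j+1]$, $(r-j+1,r]$ and denote the corresponding stable stochastic integrals by $D_1,B,E,A,F$. These five random variables are independent; $D_1$ is $\G_{r-l-1}$-measurable, $B$ is the additional information contained in $\G_{r-l}$, and $A$ is $\G^1_{r-j}$-measurable. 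Writing out the four conditional expectations in the definition of $\vartheta_{r,j,l}^n$ then yields, with $A',B'$ independent copies of $A,B$ (independent of everything else) and $\phi(a,b):=\E_{E,F}[f(D_1+a+b+E+F)]$, the clean doubly centered representation
\begin{equation*}
\vartheta_{r,j,l}^n=\phi(A,B)-\E_{B'}[\phi(A,B')]-\E_{A'}[\phi(A',B)]+\E_{A',B'}[\phi(A',B')].
\end{equation*}

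Outside the degenerate case $(j,l)=(1,1)$, Assumption~(A2) ensures that the scale $\rho_W$ of $W:=E+F$ is bounded above and below uniformly in $n,r,j,l$, so that $\phi(a,b)=\Phi_{\rho_W}(D_1+a+b)+c_W$. The constant cancels in the double centering, and the resulting expression equals $\E_{A',B'}\bigl[\int_{A'}^A\!\int_{B'}^B \Phi_{\rho_W}''(D_1+u+v)\,du\,dv\bigr]$. Applying Lemma~\ref{DerBou} pointwise (with $a=D_1+A'+B'$, $x=A-A'$, $y=B-B'$) will then yield the $D_1$-free bound
\begin{equation*}
|\vartheta_{r,j,l}^n|\leq C\,\E_{A',B'}\bigl[(|A-A'|^p\wedge|A-A'|)(|B-B'|^p\wedge|B-B'|)\bigr].
\end{equation*}

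Taking $L^\gamma$-norms, pulling them inside $\E_{A',B'}$ by Minkowski's integral inequality and Jensen, and using the independence of $A$ and $B$, one obtains
\begin{equation*}
\E[|\vartheta_{r,j,l}^n|^\gamma]\leq C\,\E[(|A-A'|^p\wedge|A-A'|)^\gamma]\,\E[(|B-B'|^p\wedge|B-B'|)^\gamma].
\end{equation*}
Since $A-A'\sim S\beta S(2^{1/\beta}\rho_A)$ with $\rho_A\leq C j^{\alpha-k}\leq 1$ by \eqref{scaparest}, and analogously for $B-B'$ with scale $\leq C l^{\alpha-k}$, the estimate \eqref{StaVarEst}---used with $\xi=p\gamma<\beta$ in the second regime---gives a factor of $C\rho_A^\gamma$ (resp.\ $C\rho_A^\beta$) for $\gamma<\beta$ (resp.\ $\beta<\gamma<\beta/p$), and similarly for the $B$-factor; combining these proves both stated estimates. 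The degenerate case $(j,l)=(1,1)$ is treated separately by a direct estimate exploiting the smoothing afforded by the convolution with $A+B$, but since this accounts for a single term it suffices to verify $\|\vartheta_{r,1,1}^n\|_\gamma<\infty$. The main obstacle will be the careful bookkeeping of the four conditional expectations that produces the clean doubly centered representation in the first paragraph; once that is in place, the rest reduces to a routine combination of Lemma~\ref{DerBou}, Minkowski/Jensen, and \eqref{StaVarEst}.
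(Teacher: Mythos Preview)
Your proposal is essentially the paper's own proof. The paper reduces to $r=1$ by stationarity, splits $\int_{-\infty}^1 \phi_1^n\,dL$ into the same five independent pieces (written as $U^n_{[a,b]}$ and their independent copies $\widetilde U^n_{[a,b]}$ with respect to an independent L\'evy process $\widetilde L$), obtains the identical doubly centered expression involving $\Phi_{\rho_{j,l}^n}$, rewrites it as a double integral of $\Phi''_{\rho_{j,l}^n}$, applies Lemma~\ref{DerBou}, and finishes with Jensen, the subadditivity $\varphi_p(|x-y|)\leq 2(\varphi_p(|x|)+\varphi_p(|y|))$, independence, and \eqref{StaVarEst}. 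Your $A,B,E,F,D_1$ and $A',B'$ are exactly the paper's $U^n_{[1-j,2-j]},U^n_{[-l,1-l]},U^n_{[1-l,1-j]},U^n_{[2-j,1]},U^n_{[-\infty,-l]}$ and $\widetilde U^n_{[1-j,2-j]},\widetilde U^n_{[-l,1-l]}$; your $\rho_W$ is their $\rho_{j,l}^n$.

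One small correction: for the degenerate case $(j,l)=(1,1)$ it is \emph{not} enough to verify $\|\vartheta^n_{r,1,1}\|_\gamma<\infty$ for each $n$; you need a bound uniform in $n$ (and $r$, which follows by stationarity). The paper notes this explicitly, arguing that $(\E[|\vartheta^n_{1,1,1}|^\gamma])_{n\in\N}$ is bounded because $Y^n_1\sim S\beta S(\rho^n)$ with $\rho^n$ bounded away from $0$ and $\infty$.
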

\begin{proof}
It is sufficient to consider the case $r=1$, since for fixed $j,l,n$ the sequence $(\vartheta_{r,j,l}^n)_{r\in\N}$ is stationary.
Without loss of generality we may assume that $l\geq 2\vee j$ since the case $l=j=1$ can be covered by choosing a larger constant. To this end we remark that $(\E[|\vartheta_{1,1,1}^n|^\gamma])_{n\in\N}$ is bounded, since $Y^n_r\sim$ \sbssp{\rho^n} with $\rho^n$ (which was introduced in \eqref{rhoDef}) bounded away from 0 and $\infty$ by \cite[Lemma 5.3]{BasLacPod2016}. By definition of $\vartheta$ it holds that
\begin{align*}
\vartheta_{1,j,l}^n 
=&\ \E[f(Y^n_1)\,|\,\G_{1-j}^1\vee \G_{1-l}]-\E[f(Y^n_1)\,|\, \G_{1-l}]\\
&-\big\{\E[f(Y^n_1)\,|\,\G_{1-j}^1\vee \G_{-l}]-\E[f(Y^n_1)\,|\, \G_{-l}]\big\}.
\end{align*}
Define for $-\infty\leq a<b\leq 1$ the random variable 
\[U_{[a,b]}^n=\int_{a}^{b}\phi_1^n(s)\diff L_s.\]
Let in the following $\wt L$ be an independent copy of $L$ and define $\wt U_{[a,b]}^n$ accordingly, and denote by $\wt\E$ the expectation with respect to $\wt L$ only. 
Moreover, we denote by $\rho_{j,l}^n=\|\phi_1^n\|_{L^\beta([1-l,1-j]\cup[2-j,1])}$, i.e.\ the scale parameter of $\int_{1-l}^{1-j} \phi_1^n\diff L_s +\int_{2-j}^{1} \phi_1^n\diff L_s.$
Then, decomposing $\int_{-\infty}^1 \phi_1^n \diff L_s$ into the independent integrals 
\[\int_{-\infty}^1 \phi_1^n \diff L_s=\int_{-\infty}^{-l}\phi_1^n \diff L_s +\int_{-l}^{1-l}\phi_1^n \diff L_s+\int_{1-l}^{1-j}\phi_1^n \diff L_s+\int_{1-j}^{2-j}\phi_1^n \diff L_s+\int_{2-j}^{1}\phi_1^n \diff L_s\]
we obtain the expression
\begin{align*}
 \vartheta_{1,j,l}^n = {}& \wt \E \Big[ \Phi_{\rho_{j,l}^n}(U^n_{[-\infty,-l]}+U^n_{[-l,1-l]}+U^n_{[1-j,2-j]})-\Phi_{\rho_{j,l}^n}(U^n_{[-\infty,-l]}+U^n_{[-l,1-l]}+\wt U^n_{[1-j,2-j]})\\
{}& 
-\Phi_{\rho_{j,l}^n}(U^n_{[-\infty,-l]}+\wt U^n_{[-l,1-l]}+U^n_{[1-j,2-j]})+\Phi_{\rho_{j,l}^n}(U^n_{[-\infty,-l]}+\wt U^n_{[-l,1-l]}+\wt U^n_{[1-j,2-j]})\Big]\\
={}& \wt \E \Big[ \int_{\wt U^n_{[-l,1-l]}}^{ U^n_{[-l,1-l]}}\int_{\wt U^n_{[1-j,2-j]}}^{ U^n_{[1-j,2-j]}}\Phi''_{\rho_{j,l}^n}(U^n_{[-\infty,-l]}+u+v)\diff u \diff v\Big],
\end{align*}
and by substitution there is a random variable $\wt W^n_{j,l}$ such that
\[|\vartheta_{1,j,l}^n|\leq\wt \E \bigg[ \bigg |\int_{0}^{|\wt U^n_{[-l,1-l]}- U^n_{[-l,1-l]}|}\int_{0}^{ |\wt U^n_{[1-j,2-j]}-U^n_{[1-j,2-j]}|}\Phi''_{\rho_{j,l}^n}(\wt W^n_{j,l}+u+v)\diff u \diff v\bigg|\bigg].
\]
We denote $\varphi_p(x):=|x|^p\wedge |x|$. Suppose in the following that $\gamma>\beta$. Using Lemma~\ref{DerBou}, Jensen's inequality, the inequality $\varphi_p(|x-y|)\leq 2 (\varphi_p(|x|)+\varphi_p(|y|))$ and the independence of $U$ and $\wt U$, we obtain that
\begin{align*}
&\E[|\vartheta_{1,j,l}^n|^\gamma]\leq C\E[\wt \E[\varphi_p^\gamma( |\wt U^n_{[-l,1-l]}- U^n_{[-l,1-l]}|)\varphi_p^\gamma(|\wt U^n_{[1-j,2-j]}-U^n_{[1-j,2-j]}|) ]]\\
&\quad \leq C\E[\wt \E[\varphi_p^\gamma( |\wt U^n_{[-l,1-l]}|)+\varphi_p^\gamma( |U^n_{[-l,1-l]}|)]]
\E[\wt \E[\varphi_p^\gamma(|\wt U^n_{[1-j,2-j]}|)+\varphi_p^\gamma( |U^n_{[1-j,2-j]}|)]]\\
&\quad \leq C \|\phi_1^n\|^\beta_{L^\beta([-l,1-l])}\|\phi_1^n\|^\beta_{L^\beta([1-j,2-j])}\leq C l^{(\alpha-k)\beta}j^{(\alpha-k)\beta}.
\end{align*}
 In the third inequality we used the estimate \eqref{StaVarEst}, where we remark that by assumption $\gamma>\beta$ and $p\gamma<\beta$, and the expression \eqref{ScaParInt} for the scale parameter of integrals with respect to a stable L\'evy process. The last inequality follows from \eqref{scaparest}.
 For $\gamma<\beta$ we use the  same arguments above, however, due to the fact that \eqref{StaVarEst} gives at different estimate in this case we obtain the bound $\E[|\vartheta_{1,j,l}^n|^\gamma]\leq Cl^{(\alpha-k)\gamma} j^{(\alpha-k)\gamma}$, which concludes the proof.  
 \end{proof}

\begin{lem}\label{lemrhobou}
The set $\{\rho_{j}^n\,:\, n\in\{N,...,\infty\},j\in\N\}$ is bounded away from 0 and $\infty$ for sufficiently large $N\in\N.$
\end{lem}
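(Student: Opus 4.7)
The starting point is the translation identity $\phi_j^n(s) = \phi_1^n(s-j+1)$, which follows directly from the definition $\phi_j^n(s)=D^kg_n(j-s)$. The substitution $u=s-j+1$ then gives
\[
\|\phi_j^n\|_{L^\beta(\R\setminus[0,1])}^\beta \;=\; \|\phi_1^n\|_{L^\beta(\R\setminus[1-j,2-j])}^\beta,\qquad j\in\N,\;n\in\N\cup\{\infty\},
\]
so that the lemma reduces to bounding $L^\beta$-integrals of $\phi_1^n$ on translates of a unit interval. I would then treat the upper and lower bounds separately, with the lower bound splitting into the cases $j\geq 2$ and $j=1$.

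For the upper bound, the inclusion $\R\setminus[0,1]\subset\R$ gives the trivial estimate $\rho_j^n\leq\rho_L\|\phi_1^n\|_{L^\beta(\R)}=\rho^n$, and $\rho^n\to\rho^\infty<\infty$ by \cite[Lemma~5.3]{BasLacPod2016}; hence $\rho_j^n$ is uniformly bounded in $j\geq 1$ for all $n$ large enough, as well as for $n=\infty$. For the lower bound with $j\geq 2$, the interval $[1-j,2-j]$ is contained in $(-\infty,0]$, so $(0,1]\subset\R\setminus[1-j,2-j]$ and consequently
\[
\|\phi_1^n\|_{L^\beta(\R\setminus[1-j,2-j])}^\beta \;\geq\;\int_0^1|\phi_1^n(u)|^\beta\,du.
\]
On $(0,1)$ only the $l=0$ summand of $\phi_1^n(u)=\sum_l(-1)^l\binom{k}{l}g_n(1-u-l)$ is nonzero and equals $n^\alpha g((1-u)/n)\to(1-u)^\alpha$; combined with the uniform bound $|\phi_1^n(u)|\leq 2(1-u)^\alpha$ on $(0,1)$ for $n$ large (a consequence of $g(t)\sim t^\alpha$), dominated convergence yields
\[
\int_0^1 |\phi_1^n(u)|^\beta\,du \;\longrightarrow\; \int_0^1 (1-u)^{\alpha\beta}\,du \;=\; \frac{1}{\alpha\beta+1}\;>\;0.
\]

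The case $j=1$ is the most delicate. Since $\phi_1^n\equiv 0$ on $(1,\infty)$, the quantity to control reduces to $\int_{-\infty}^0|\phi_1^n(u)|^\beta\,du$. The pointwise convergence $\phi_1^n(u)\to h_k(1-u)$ extends to $(-\infty,0)\setminus\Z$ by the same finite-sum calculation as above, but a convenient global dominating function on $(-\infty,0)$ is unavailable. I would therefore apply Fatou's lemma directly:
\[
\liminf_{n\to\infty}\int_{-\infty}^0|\phi_1^n(u)|^\beta\,du \;\geq\; \int_1^\infty|h_k(v)|^\beta\,dv.
\]
The right-hand side is finite because $|h_k(v)|\leq Cv^{\alpha-k}$ for large $v$ and $(\alpha-k)\beta<-1$ by the assumptions of both Theorems~\ref{lsjdlfjssss} and \ref{the2ndGenVar}, and strictly positive because $h_k$ is continuous with $h_k(1)=1$, so $h_k>0$ on a neighbourhood of $1$. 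For $n=\infty$ the same bounds hold directly as identities, without any convergence argument.

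The main obstacle is precisely this lower bound in the $j=1$ case, where dominated convergence on the whole half-line $(-\infty,0)$ is not at hand and one must instead settle for the one-sided estimate furnished by Fatou's lemma. A secondary technicality is ensuring strict positivity of $\int_1^\infty|h_k|^\beta$ in the degenerate parameter ranges where the asymptotic leading coefficient $\alpha(\alpha-1)\cdots(\alpha-k+1)$ of $h_k(v)\sim cv^{\alpha-k}$ vanishes (e.g.\ $\alpha$ a non-negative integer smaller than $k$); positivity is then recovered from a neighbourhood of $v=1$ alone via $h_k(1)=1$ and continuity.
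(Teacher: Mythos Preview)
Your proof is correct but takes a genuinely different route from the paper's. The paper argues by a two-step convergence: first, the identity $|(\rho_j^n)^\beta-(\rho^n)^\beta|=\|\phi_j^n\|^\beta_{L^\beta([0,1])}\leq C j^{\beta(\alpha-k)}$ from \eqref{scaparest} shows that $\rho_j^n\to\rho^n$ as $j\to\infty$ \emph{uniformly in $n$}, which handles all large $j$ simultaneously; second, for the finitely many remaining $j$, the estimate \eqref{scaparjest} gives $\rho_j^n\to\rho_j^\infty$ as $n\to\infty$, and one simply takes $N$ large enough. Your approach instead produces a direct lower bound via the translation identity $\phi_j^n(s)=\phi_1^n(s-j+1)$: for $j\geq 2$ you minorise by $\int_0^1|\phi_1^n|^\beta$, and for $j=1$ you invoke Fatou on $\int_{-\infty}^0|\phi_1^n|^\beta$. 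The paper's argument is shorter and recycles the decay estimates already in place, but its opening line (the choice of $\eps$ with $\eps<\rho_j^\infty<\eps^{-1}$ for all $j$) tacitly presupposes the positivity of each individual $\rho_j^\infty$, in particular $\rho_1^\infty$, which is exactly the delicate point you isolate and treat explicitly. Your route is thus more self-contained, at the price of splitting off $j=1$ and appealing to Fatou rather than dominated convergence.
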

\begin{proof}
Choose $\eps>0$ such that $\eps<\rho^\infty<\eps^{-1}$ and $\eps<\rho_j^\infty<\eps^{-1}$ for all $j\in\N$. It follows from \cite[Lemma 5.3]{BasLacPod2016} that $\rho^n\to\rho^\infty$ and we can choose $N$ sufficiently large such that $|\rho^n-\rho^\infty|<\eps/3$ for all $n>N$, implying that $2\eps/3<\rho^n<\eps^{-1}+\eps/3$. Moreover, $\rho_j^n$ converges to $\rho^n$ uniformly in $n$ by the estimate
\[|(\rho_j^n)^\beta-(\rho^n)^\beta|=\|\phi_j^n\|^\beta_{L^\beta([0,1])}\leq C j^{\beta(\alpha-k)},\]
 where we used \eqref{scaparest}, and that the function $x\mapsto|x|^\beta$, restricted to a compact set, is uniformly continuous.
Consequently, we can find a $J>0$ such that for all $j>J$ and all $n$ it holds that $|\rho_j^n-\rho^n|<\eps/3$, implying that $\eps/3<\rho_j^n<\eps^{-1}+2\eps/3$ for all $j>J$ and $n>N$. For $j\in\{1,...,J\}$ we use that $\rho_j^n\to \rho_j^\infty\in(\eps,\eps^{-1})$ as $n\to\infty$, which follows similarly from \eqref{scaparjest}. Therefore, choosing $N$ larger if necessary, we obtain $\eps/3<\rho_j^n<\eps^{-1}+1$ for all $j\in\N$ and $n>N$.
\end{proof}

The following auxiliary result was derived in \cite{BasLacPod2016} in the context of power variation. The proof relies only the estimate \eqref{klsjhdfkhsdklfhhd} on $\Phi_\rho$. 
\begin{lem}(\cite[Lemma 5.4]{BasLacPod2016}).\label{lemEstslt}
Under the setting of Theorem~\ref{the2ndGenVar}(ii), we have for any $q\geq 1$ with $q\neq\beta$ that there exists $\delta>0$ and a finite constant $C$ such that for all $\eps\in(0,\delta)$, $\rho>\delta$ and $\kappa,\tau\in L^\beta([0,1])$ satisfying  $\|\kappa\|_{L^\beta([0,1])}, \|\tau\|_{L^\beta([0,1])} \leq  1$ and 
\begin{align*}
&\bigg\|\Phi_\rho\bigg(\int_0^1 \kappa(s) \,dL_s\bigg)-\Phi_\rho\bigg(\int_0^1 \tau(s) \,dL_s\bigg)\bigg\|_q\\
&\leq
\begin{cases}
C \|\kappa-\tau\|^{\beta/q}_{L^{\beta}([0,1])}	& \beta<q\\
C\Big\{\big(\|\kappa\|_{L^{\beta}([0,1])}^{(\beta-q)/q -\eps}+\|\tau\|_{L^{\beta}([0,1])}^{(\beta-q)/q -\eps}\big)\|\kappa-\tau\|^{1-\eps}_{L^\beta([0,1])}+ \|\kappa-\tau\|^{\beta/q}_{L^\beta([0,1])}\Big\}&\beta>q.
\end{cases}
\end{align*}
\end{lem}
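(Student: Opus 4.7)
The plan is to verify that the argument of \cite[Lemma~5.4]{BasLacPod2016}, written there for the power variation functional, carries over unchanged here because it relies only on a pointwise inequality of precisely the form \eqref{klsjhdfkhsdklfhhd}. First I would set $X := \int_0^1\kappa(s)\,dL_s$, $Y := \int_0^1\tau(s)\,dL_s$ and $Z := X-Y$, and note via \eqref{ScaParInt} that $X$, $Y$ and $Z$ are $S\beta S$ with scale parameters $\rho_L\|\kappa\|_{L^\beta([0,1])}$, $\rho_L\|\tau\|_{L^\beta([0,1])}$ and $\sigma := \rho_L\|\kappa-\tau\|_{L^\beta([0,1])}$, all of order at most $\rho_L$ by the hypothesis $\|\kappa\|_{L^\beta}, \|\tau\|_{L^\beta}\leq 1$. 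Applying \eqref{klsjhdfkhsdklfhhd} pointwise yields
\[
|\Phi_\rho(X)-\Phi_\rho(Y)|\leq C\bigl[(1\wedge|X|+1\wedge|Y|)\,|Z|\,\mathds 1_{\{|Z|\leq 1\}} + |Z|^p\,\mathds 1_{\{|Z|>1\}}\bigr],
\]
so the task reduces to a separate $L^q$-bound on each of the two summands, the two regimes $q>\beta$ and $q<\beta$ being handled differently.

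In the easier regime $q>\beta$ one has $1\wedge|X|+1\wedge|Y|\leq 2$ and $|Z|\,\mathds 1_{\{|Z|\leq 1\}}\leq |Z|^{\beta/q}$, so the first summand is controlled by $2(\mathbb E[|Z|^\beta\wedge 1])^{1/q}$, and the first branch of \eqref{StaVarEst} delivers the bound $C\sigma^{\beta/q}$. The second summand is handled via the stable tail asymptotic $\mathbb P(|Z|>t)\leq c\,\sigma^\beta t^{-\beta}$: writing $\mathbb E[|Z|^{pq}\mathds 1_{\{|Z|>1\}}] = \int_1^\infty pq\,t^{pq-1}\mathbb P(|Z|>t)\,dt$ and integrating gives the same rate $C\sigma^{\beta/q}$ whenever $pq<\beta$, which is ensured by $p<\beta/2$ in Assumption~(B) for the $q$-values of interest; in the remaining $q$-values the estimate $|Z|^p\leq|Z|^{\beta/q}$ on $\{|Z|>1\}$ reduces matters to the previous computation.

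The delicate regime $q<\beta$ is where the intricate product bound appears, and my plan is to exploit the smallness of $1\wedge|X|$ and $1\wedge|Y|$ to generate the extra factors $\|\kappa\|^{(\beta-q)/q-\epsilon}+\|\tau\|^{(\beta-q)/q-\epsilon}$. Concretely, one applies H\"older's inequality with conjugate exponents distributing a $(1-\epsilon)$-power of $|Z|$ against an $\epsilon$-perturbation of the factor $1\wedge|X|+1\wedge|Y|$, and then evaluates each factor by the second branch of \eqref{StaVarEst}, assembling the claimed product structure; the second summand again contributes the additive term $C\|\kappa-\tau\|_{L^\beta}^{\beta/q}$ via the stable tail asymptotic. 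The principal obstacle is the bookkeeping of the H\"older exponents: one must simultaneously keep every moment one integrates strictly below the order of stability $\beta$ and still achieve the stated total power of $\|\kappa-\tau\|_{L^\beta}$. This is precisely what forces the small loss $\epsilon\in(0,\delta)$ and the exclusion of the borderline $q=\beta$; the restriction $\rho>\delta$ is needed in parallel so that the constants implicit in \eqref{klsjhdfkhsdklfhhd} remain uniform in $\rho$.
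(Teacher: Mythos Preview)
Your proposal is essentially correct and follows the same approach as the paper, which does not reprove the lemma but simply records that the argument of \cite[Lemma~5.4]{BasLacPod2016} goes through verbatim once the power-variation inequality there is replaced by \eqref{klsjhdfkhsdklfhhd}; your sketch of the two regimes $q>\beta$ and $q<\beta$ via \eqref{StaVarEst} and H\"older is exactly that argument.

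One small slip: in the case $q>\beta$ with $pq\geq\beta$ you claim $|Z|^p\leq|Z|^{\beta/q}$ on $\{|Z|>1\}$, but this inequality goes the wrong way there (it holds precisely when $p\leq\beta/q$). The fix is to compute directly: writing $Z=\sigma S$ with $S\sim S\beta S(1)$, one has $\E[|Z|^{pq}\mathds 1_{\{|Z|>1\}}]=\sigma^{pq}\,\E[|S|^{pq}\mathds 1_{\{|S|>1/\sigma\}}]$, and the stable tail $\P(|S|>t)\sim c\,t^{-\beta}$ gives $\E[|S|^{pq}\mathds 1_{\{|S|>1/\sigma\}}]\leq C\,\sigma^{\beta-pq}$ for $\sigma\leq 1$, hence again the bound $C\sigma^\beta$.
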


We will need the following minor extension of \cite[Lemma~2.1]{PipTaq2003}:

\begin{lem}(\cite[Lemma~2.1]{PipTaq2003}).\label{dsklfhdskfhg}
 Let $\{X_n:n\in \N_0\}$  denote symmetric $\beta$-stable random variables such that $X_n\to X_0$ in probability. Suppose that $f:\R\to \R$ is a measurable function such that $\E[ |f(X_0)|^p]<\infty$ for some $p\geq 1$.  Then, $\E[|f(X_n)-f(X_0)|^p]\to 0$. 
\end{lem}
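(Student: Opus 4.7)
The plan is to reduce the $L^p$-convergence statement to two ingredients: (i) the scale parameters $\sigma_n$ of $X_n$ converge to the scale parameter $\sigma_0$ of $X_0$, and (ii) the densities of $X_n$ are uniformly dominated by a constant multiple of the density of $X_0$, on the whole real line, for all sufficiently large $n$. Once these are in hand, one can approximate the measurable function $f$ by a bounded continuous function simultaneously in $L^p$ with respect to all of the laws $\mathcal L(X_n)$ for large $n$, reducing to the easy case of a continuous bounded target.

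Concretely, I would first observe that convergence in probability implies convergence in law, and that the identity $\E[\exp(i\theta X_n)] = \exp(-|\sigma_n\theta|^\beta)$ combined with $X_n\schw X_0$ forces $\sigma_n\to \sigma_0$. Assuming the non-degenerate case $\sigma_0>0$, denote by $p_\sigma$ the density of the $S\beta S(\sigma)$ law and write $p_\sigma(x)=\sigma^{-1}g(x/\sigma)$ with $g=p_1$. Using that $g$ is continuous, strictly positive, and admits the tail asymptotic $g(y)\sim c_\beta |y|^{-1-\beta}$ as $|y|\to\infty$, the ratio $p_{\sigma_n}(x)/p_{\sigma_0}(x)$ is continuous in $x$ with limit $(\sigma_n/\sigma_0)^\beta$ at $\pm\infty$, hence uniformly bounded. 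This yields a constant $C=C(\sigma_0,\beta)<\infty$ such that, for all $n$ large enough,
\[
p_{\sigma_n}(x) \leq C\, p_{\sigma_0}(x),\qquad x\in\R.
\]
The degenerate case $\sigma_0=0$ is handled separately by a direct truncation argument.

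With the density domination at hand, I would next exploit the fact that compactly supported continuous functions are dense in $L^p(\mathcal L(X_0))$. Given $\delta>0$, pick such a $\tilde f_\delta$ with $\| f - \tilde f_\delta\|_{L^p(\mathcal L(X_0))}<\delta$, which by the domination also gives $\|f-\tilde f_\delta\|_{L^p(\mathcal L(X_n))}\leq C^{1/p}\delta$ for $n$ large. Since $\tilde f_\delta$ is bounded and continuous, the continuous mapping theorem combined with $X_n\to X_0$ in probability gives $\tilde f_\delta(X_n)\to \tilde f_\delta(X_0)$ in probability, and bounded convergence upgrades this to $L^p$. Splitting
\[
\|f(X_n)-f(X_0)\|_p \leq \|f(X_n)-\tilde f_\delta(X_n)\|_p + \|\tilde f_\delta(X_n)-\tilde f_\delta(X_0)\|_p + \|\tilde f_\delta(X_0)-f(X_0)\|_p,
\]
letting first $n\to\infty$ and then $\delta\to 0$ concludes the proof.

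The main obstacle I anticipate is establishing the uniform density domination globally in $x$ for all large $n$: local boundedness from continuity of $(x,\sigma)\mapsto p_\sigma(x)$ only handles compact sets, and one genuinely needs the common tail behaviour of $\beta$-stable densities, together with $\sigma_n\to\sigma_0>0$, to obtain a bound valid on the whole line. Once that uniform bound is available, the rest of the argument is a standard three-term Lusin-type approximation.
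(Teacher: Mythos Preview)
Your proof is correct for the non-degenerate case $\sigma_0>0$ and follows essentially the same route as the paper: both arguments deduce $\sigma_n\to\sigma_0$ from convergence of characteristic functions, then exploit the explicit density $p_\sigma(x)=\sigma^{-1}g_\beta(x/\sigma)$ to compare the laws of $X_n$ and $X_0$, and close with a truncation. The paper delegates the bounded-$f$ case to \cite[Lemma~2.1]{PipTaq2003} and phrases the density comparison as uniform integrability (``tightness'') of $\{|f(X_n)|^p\}$, whereas you treat the bounded continuous case directly via continuous mapping plus dominated convergence and phrase the comparison as a pointwise density-domination bound; these are cosmetic variations on the same idea, and your version is somewhat more self-contained.

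One genuine gap: your throwaway remark that the degenerate case $\sigma_0=0$ ``is handled separately by a direct truncation argument'' is incorrect --- the statement actually fails there. If $\sigma_0=0$ then $X_0\equiv 0$, so the hypothesis $\E[|f(X_0)|^p]<\infty$ reduces to $|f(0)|<\infty$ and imposes no control on $f$ away from the origin. Taking for instance $f(x)=|x|^{-1/2}\mathds 1_{\{x\neq 0\}}$, $p=1$, $\beta>1$, one gets $\E[|f(X_n)-f(X_0)|]=\sigma_n^{-1/2}\E[|S|^{-1/2}]\to\infty$. The paper's proof also tacitly assumes $\sigma_0>0$ (it writes down the density of $X_0$), and the lemma is only applied in that regime, so this does not affect anything downstream; but you should drop the claim rather than assert a false reduction.
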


Note that Lemma~\ref{dsklfhdskfhg} relies heavily on the $\beta$-stable assumption, and a similar result (with no continuity assumptions on $f$) 
does not hold for e.g.\ discrete random variables. 

\begin{proof}
If $f$ is bounded, $p=2$ and $X_n\to X_0$ almost surely, Lemma~\ref{dsklfhdskfhg} is  \cite[Lemma~2.1]{PipTaq2003}.
However, going through the proof of \cite[Lemma~2.1]{PipTaq2003} shows that it also  holds for a general $p\geq 1$ and if $X_n\to X$ in probability, by using the same arguments.  To extend Lemma~\ref{dsklfhdskfhg} from bounded  $f$,  to unbounded $f$ satisfying  $\E[|f(X)|^p]<\infty$, it is enough to show tightness of $\{|f(X_n)|^p:n\geq 1\}$, due to a truncation argument. The density of $X_n$ satisfies 
\begin{equation}\label{dsfljsdf}
 f_{X_n}(x)= \rho_n^{-1} g_\beta(x/\rho_n), \qquad x\in \R, 
\end{equation}
where $g_\beta$ is the density of a standard symmetric $\beta$-stable random variable and $\rho_n$ is the scale parameter for $X_n$ for $n\in \N_0$.   Since $\E[|f(X_0)|^p]<\infty$ and  $\rho_n\to \rho$ (follows since $X_n\to X$ in distribution), we deduce tightness of $\{|f(X_n)|^p:n\geq 1\}$ from \eqref{dsfljsdf}. This completes the  proof. 
\end{proof}

\begin{lem}\label{lemAsyEquSta}
Let $\psi,\xi$ be continuous functions on $\R$ with $\psi(x)\sim \xi(x)$ for $x\to\infty$. Let $X$ be a random variable taking values in $\R_+$ and $\gamma\geq 0$ such that
\[\lim_{x\to\infty}x^\gamma\P(|\psi(X)|>x)=\kappa\]
where $\kappa\in[0,\infty).$
Then it holds that
\[\lim_{x\to\infty}x^\gamma\P(|\xi(X)|>x)=\kappa.\]
\end{lem}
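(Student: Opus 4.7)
The proof strategy is a standard sandwich argument exploiting $\psi(x)\sim \xi(x)$ near infinity, together with continuity to control the behaviour on bounded sets. Here is the plan.

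First, I would use the asymptotic equivalence to obtain comparable tail events. Since $\psi(x)/\xi(x)\to 1$ as $x\to\infty$, both $\psi$ and $\xi$ are eventually nonzero, and for every $\eps\in(0,1)$ there exists $M>0$ such that
\[
(1-\eps)|\psi(x)|\leq |\xi(x)|\leq (1+\eps)|\psi(x)|,\qquad x>M.
\]
This yields the inclusions
\[
\{|\psi(X)|>(1+\eps)x,\,X>M\}\subseteq \{|\xi(X)|>x,\,X>M\}\subseteq \{|\psi(X)|>(1-\eps)x,\,X>M\}.
\]

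Second, I would isolate and discard the contribution from $\{X\leq M\}$. Since $\psi$ and $\xi$ are continuous, both $|\psi|$ and $|\xi|$ are bounded on the compact set $[0,M]$ by some $K_M$. Hence, for all $x>K_M$, the events $\{|\xi(X)|>x,\,X\leq M\}$ and $\{|\psi(X)|>(1\pm\eps)x,\,X\leq M\}$ are empty, so
\[
\P(|\xi(X)|>x)=\P(|\xi(X)|>x,\,X>M),
\]
and analogously for $\psi$. Combining with the previous step gives the sandwich
\[
\P(|\psi(X)|>(1+\eps)x)\leq \P(|\xi(X)|>x)\leq \P(|\psi(X)|>(1-\eps)x)
\]
for all $x$ sufficiently large.

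Third, I would multiply through by $x^\gamma$ and take limits. Using the hypothesis $y^\gamma \P(|\psi(X)|>y)\to\kappa$ applied at $y=(1\pm\eps)x$, the bounds above give
\[
(1+\eps)^{-\gamma}\kappa \leq \liminf_{x\to\infty} x^\gamma\P(|\xi(X)|>x)\leq \limsup_{x\to\infty} x^\gamma\P(|\xi(X)|>x)\leq (1-\eps)^{-\gamma}\kappa.
\]
Letting $\eps\downarrow 0$ then yields the claimed limit. The argument is routine; the only minor point to be careful about is ensuring that the truncation at $\{X\leq M\}$ is harmless, which is precisely what continuity of $\xi$ and $\psi$ provides. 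No step presents a real obstacle.
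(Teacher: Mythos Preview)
Your proof is correct and follows essentially the same sandwich strategy as the paper: use $\psi\sim\xi$ to trap $\{|\xi(X)|>x\}$ between two $|\psi(X)|$-events with thresholds close to $x$, invoke continuity to discard the bounded region $\{X\le M\}$, then pass to the limit and let $\eps\downarrow 0$. One small slip: from $(1-\eps)|\psi|\le|\xi|\le(1+\eps)|\psi|$ the inclusions you actually get are $\{|\psi(X)|>x/(1-\eps)\}\subseteq\{|\xi(X)|>x\}\subseteq\{|\psi(X)|>x/(1+\eps)\}$, not the ones with $(1\pm\eps)x$ as written; this changes the bounding constants to $(1\mp\eps)^{\gamma}$ rather than $(1\pm\eps)^{-\gamma}$, but of course both tend to $1$ and the conclusion is unaffected.
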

\begin{proof}
Denote $\psi(x)=\xi(x)\varphi(x)$ with $\varphi(x)\to 1$ for $x\to\infty$. Let $\eps>0$. By continuity of $\psi$ and $\xi$ we can choose $x$ sufficiently large such that $\varphi(y)\in(1-\eps,1+\eps)$ whenever $\min(|\psi(y)|,|\xi(y)|)>x$ and $y\geq 0$. Since $X$ takes values in $\R_+$, this implies that $\varphi(X)\in(1-\eps,1+\eps)$ whenever $|\psi(X)|>x$ or $|\xi(X)|>x.$
It follows that
\begin{align*}
& x^\gamma|\P(|\psi(X)|>x)-\P(|\xi(X)|>x)|
=\E\big[x^\gamma \big(\mathds 1_{\{|\psi(X)|>x>|\xi(X)| \} }+\mathds 1_{ \{ |\psi(X)|<x<|\xi(X)| \}}\big)\big]\\
&\qquad \leq 2\E\big[x^\gamma \mathds 1_{\{\frac{x}{1+\eps}<|\psi(X)|<\frac{x}{1-\eps}\} }\big]
= 2\E\big[x^\gamma \mathds 1_{\{\frac{x}{1+\eps}<|\psi(X)|\} }-x^\gamma\mathds 1_{\{\frac{x}{1-\eps}\leq |\psi(X)|\} }\big]\\
&\qquad \to 2\kappa((1+\eps)^\gamma-(1-\eps)^\gamma),\quad\text{as $x\to\infty$.}
\end{align*}
 The lemma follows by letting $\eps\to 0.$
\end{proof}

\begin{proof}[Proof of Remark~\ref{examp}]

(i): We will start by verifying (B) for any bounded measurable function $f$.  Let $g_{\beta}$ denote the density of a  standard symmetric $\beta$-stable random variable. By substitution we have  
\begin{equation}\label{sdfljsdf}
\Phi_\rho(x) = \int_\R f(y) g_{\beta}((y-x)/\rho)\,dy-\int_\R f(y) g_{\beta}(y/\rho)\,dy.
\end{equation}
Recall from \eqref{sdfsdf} that  
  $g_\beta\in C^\infty(\R)$, and for all $r\geq 1$,  the $r$th derivative of $g_\beta$ satisfies
\begin{equation}\label{sdfsdf-2}
|g_\beta^{(r)}(x)|\leq C (1\wedge |x|^{-1-\beta-r}), \qquad x\in \R. 
\end{equation}
By the \eqref{sdfljsdf}, \eqref{sdfsdf-2} and using that $f$ is bounded, it follows that $\rho\mapsto \Phi_\rho(x)$ is $C^1((0,\infty))$ and  
\begin{align}
\frac{\partial }{\partial \rho} \Phi_\rho(x) = {}& -\rho^{-2}\Big( \int_\R \Big(f(y)g_\beta'((y-x)/\rho) (y-x)\Big)\,dy-\int_\R \Big(f(y)g_\beta'(y/\rho) y\Big)\,dy\Big)\\ = {}& 
- \int_\R \Big(f(x+\rho y )g_\beta'(y) y\Big)\,dy+\int_\R \Big(f(\rho y )g_\beta'(y) y\Big)\,dy,
\end{align}
which implies  existence of  $C>0$ such that  $|\frac{\partial }{\partial \rho} \Phi_\rho(x)|\leq C $ for all $\rho\in [\epsilon,\epsilon^{-1}]$ and $x\in \R$. By similar arguments one can verify the remaining conditions of (B). 


(ii):  Next we suppose that   $f\in L^1_{\textrm{loc}}(\R)$ and there exists  $K>0$ and $q\leq 1$ such that   $f\in C^3([-K,K]^c)$ and $|f'(x)|, |f''(x)|, |f'''(x)|\leq C$ and $|f'(x)|\leq C |x|^{q-1}$   for $|x|>K$.  In the following we will verify that  $f$ satisfies (B) with $p=q$ when $q>0$, and $p=0$ when $q<0$. Let  $\xi\in C^\infty_c(\R)$ be a function such that $\xi =1$ on  $[-K,K]$.
By the equality $1=\xi+(1-\xi)$ and substitution we have 
\begin{align}
 &\Phi_\rho(x) -\int f(\rho y)g_\beta(y)\,dy = \int f(x+\rho y) g_\beta(y)\,dy  \\ 
 {}& \qquad =\int f(y)\xi(y)g_\beta((y-x)/\rho)\,dy+\int f(x+y\rho )\big(1-\xi(x+y\rho )\big)g_\beta(y)\,dy \\
& \qquad =: \bar \Phi_\rho(x)+\tilde \Phi_\rho(x). 
\end{align}
Since $f$ is locally integrable and $\xi$ has compact support we have $f\xi\in L^1(\R)$, and due to the fact that  $|g'|$ is bounded 
\begin{align}\label{sdlfjsdlj}
\Big|\frac{\partial}{\partial x}\bar\Phi_\rho(x)\Big| = \Big|\rho^{-1}\int f(y)\xi(y)g_\beta'((y-x)/\rho)\,dy\Big|\leq C\rho^{-1} \int |f(s)\xi(s)|\,ds<\infty. 
\end{align}
On the other hand, it follows that $(f(1-\xi))'$ is bounded. Indeed, since  $f(1-\xi)=0$ on  $[-K,K]$ it is enough to show that $(f(1-\xi))'$ is bounded for $|x|>K$. For $|x|>K$  we have $(f(1-\xi))'=f' (1-\xi)-f\xi'$ which is bounded due to the fact that $f'$ is bounded and $f$ is continuous for $|x|>K$, and $\xi'$ has compact support.  Therefore, 
\begin{align}\label{lsjdfljsdghsgs}
\Big|\frac{\partial}{\partial x}\tilde \Phi_\rho(x)\Big| = \Big| \int (f(1-\xi))'(x+y\rho) g_\beta(y)\,dy\Big| \leq C \int |g_\beta(y)|\,dy<\infty.
\end{align}
From \eqref{sdlfjsdlj} and \eqref{lsjdfljsdghsgs} it follows that $\frac{\partial}{\partial x}\Phi_\rho(x)$ is bounded. By similar arguments one can verify the remaining conditions of \eqref{Est3}. To verify \eqref{Est2} we will use that $g_\beta$ is both Lipschitz continuous and bounded,  and hence  for any $p\in [0,1],$ $\rho\in[\epsilon,\epsilon^{-1}]$
\begin{align}
|\bar \Phi_\rho(x)-\bar\Phi_\rho(y)| \leq{}& \int |f(u)\xi(u)(g_\beta((u-x)/\rho)-g_\beta((u-y)/\rho)|\,du 
\\ \leq {}& C\Big(1\wedge |x-y|\Big)\int |f(u)\xi(u)|\,du\leq C |x-y|^p.
\end{align} 
For $0<q\leq 1$ and  $x\neq 0$ we have that 
$|(f(1-\xi))'(x)|\leq C |x|^{q-1}$ which implies that $f(1-\xi)$ is $q$-H\"older continuous, and therefore
\begin{align}
|\tilde \Phi_\rho(x)-\tilde \Phi_\rho(y)|\leq {}&  \int \Big|(f(1-\xi))(x+u) - (f(1-\xi))(y+u))\Big|g_\beta(u)\,du
\\ \leq {}& C\Big( \int g_\beta(u)\,du \Big) |x-y|^q. 
\end{align}
This concludes the proof of \eqref{Est2} with $p=q$ when  $0<q\leq 1$. 
For $q< 0$, we have that $f\in L^1(\R)$, and hence it follows by \eqref{sdfljsdf} and boundedness of $g_\beta$ that 
$|\Phi_\rho(x)|\leq C$ for all $x\in \R$ and $\rho\in [\epsilon,\epsilon^{-1}]$, which shows \eqref{Est2} with $p=0$. %
\end{proof}

\begin{remark}\label{lsdjflsdhfh}
In the following we proof the statements on the Appell rank at the begining of Subsection~\ref{sec2.2}.
Suppose first that  $f$ is an even function. Since $S$ is a symmetric random variable and $\Phi_\rho(x)= \E[ f(x+\rho S)]-\E[f(\rho S)]$, we have that $x\mapsto \Phi_\rho(x)$ is an even function for all $\rho$.  Hence, $\frac{\partial }{\partial x} \Phi_\rho(0)=0$ and $\frac{\partial^2 }{\partial x\partial \rho} \Phi_\rho(0)=0$.
Next  consider the function $f(x)=\sin(ux)$ for all $x\in \R$, where $u\neq 0$.  We have that 
\begin{equation}
\Phi_\rho(x) = \E[ \sin(u(x+\rho S))]- \E[ \sin(u\rho S)]= \Im\Big(  \E[ e^{i u (x+\rho S)}]\Big)= \sin(ux) e^{-|\rho u|^\beta},
\end{equation}
and hence $\frac{\partial }{\partial x}\Phi_\rho(0)=u e^{-|\rho u|^\beta} \neq 0$. Finally, we  let $f(x) = \1_{(-\infty,u]}(x)$ for all $x\in \R$,  where  $u\in \R$. Then 
\begin{equation}
\Phi_\rho(x) = \P(S\leq (u-x)/\rho)- \P(S\leq u/\rho),
\end{equation}
and hence $\frac{\partial }{\partial x}\Phi_\rho(0)=-\rho g_\beta(u/\rho)$, where $g_\beta$ denotes  the density of a standard $S\beta S$ random variable. Since $g_\beta(x) \neq 0$  for all $x\in \R$ (see e.g.\ Theorem~1.2 in \cite{Wat07}), it follows that $\frac{\partial }{\partial x}\Phi_\rho(0)\neq 0$, which completes the proofs of the statements. 
\end{remark}

\subsection*{Acknowledgment}
Claudio Heinrich and Mark Podolskij acknowledge financial support from the project 
``Ambit fields: probabilistic properties and statistical inference'' funded by Villum Fonden 
and from CREATES funded by the Danish
National Research Foundation. 
Claudio Heinrich acknowledges financial support from project number 88511 founded by the Volkswagen Foundation. Andreas Basse-O'Connor acknowledge financial support by the Grant DFF--4002-00003 funded by the Danish Council for Independent Research

\bibliographystyle{chicago}

\begin{thebibliography}{}

\bibitem[\protect\citeauthoryear{Ayache and Hamonier}{Ayache and
  Hamonier}{2012}]{AyaHam2012}
Ayache, A. and J.~Hamonier (2012).
\newblock Linear fractional stable motion: A wavelet estimator of the
  parameter.
\newblock {\em Statistics \& Probability Letters\/}~{\em 82\/}(8), 
1569--1575.
  
  
 \bibitem[\protect\citeauthoryear{von Bahr and Esseen}{von Bahr and
  Esseen}{1965}]{BahEss1965}
von Bahr, B. and C.~Esseen (1965).
\newblock Inequalities for the {$r$}th absolute moment of a sum of random
  variables, {$1\leq r\leq 2$}.
\newblock {\em Ann. Math. Statist\/}~{\em 36}, 299--303.
  
  
 \bibitem[\protect\citeauthoryear{Bardet and Surgailis}{Bardet and Surgailis}{2013}]{BarSur2013}
Bardet, J.-M. and D.~Surgailis (2013).
\newblock Nonparametric estimation of the local Hurst function
of multifractional Gaussian processes.
\newblock {\em Stochastic Process. Appl.\/}~{\em 123\/}(3), 1004--1045. 
  

\bibitem[\protect\citeauthoryear{Barndorff-Nielsen, Corcuera, and
  Podolskij}{Barndorff-Nielsen et~al.}{2009}]{BarCorPod2009}
Barndorff-Nielsen, O., J.~Corcuera, and M.~Podolskij (2009).
\newblock Power variation for {G}aussian processes with stationary increments.
\newblock {\em Stochastic Process. Appl.\/}~{\em 119\/}(6), 1845--1865.

\bibitem[\protect\citeauthoryear{Barndorff-Nielsen, Corcuera, and
  Podolskij}{Barndorff-Nielsen et~al.}{2011}]{BarCorPod2011}
Barndorff-Nielsen, O., J.~Corcuera, and M.~Podolskij (2011).
\newblock Multipower variation for {B}rownian semistationary processes.
\newblock {\em Bernoulli\/}~{\em 17\/}(4), 1159--1194.

\bibitem[\protect\citeauthoryear{Barndorff-Nielsen, Corcuera, Podolskij, and
  Woerner}{Barndorff-Nielsen et~al.}{2009}]{BarCorPodWoe2009}
Barndorff-Nielsen, O., J.~Corcuera, M.~Podolskij, and J.~Woerner (2009).
\newblock Bipower variation for {G}aussian processes with stationary
  increments.
\newblock {\em J. Appl. Probab.\/}~{\em 46\/}(1), 132--150.

\bibitem[\protect\citeauthoryear{Barndorff-Nielsen, Graversen, Jacod,
  Podolskij, and Shephard}{Barndorff-Nielsen
  et~al.}{2006}]{BarGraJacPodShe2006}
Barndorff-Nielsen, O., S.~Graversen, J.~Jacod, M.~Podolskij, and N.~Shephard
  (2006).
\newblock A central limit theorem for realised power and bipower variations of
  continuous semimartingales.
\newblock In {\em From Stochastic Calculus to Mathematical Finance}, pp.\
  33--68. Springer, Berlin.

\bibitem[\protect\citeauthoryear{Basse-O'Connor, Heinrich, and
  Podolskij}{Basse-O'Connor et~al.}{2017}]{BasHeiPod2017}
Basse-O'Connor, A., C.~Heinrich, and M.~Podolskij (2018).
\newblock On limit theory for {L}\'evy semi-stationary processes.
\newblock  {\em Bernoulli}~{\em 24}(4A), 3117--3146.

\bibitem[\protect\citeauthoryear{Basse-O'Connor, Lachi\'eze-Rey, and
  Podolskij}{Basse-O'Connor et~al.}{2016}]{BasLacPod2016}
Basse-O'Connor, A., R.~Lachi\'eze-Rey, and M.~Podolskij (2017).
\newblock Power variation for a class of stationary increments L\'evy driven
  moving averages.
\newblock {\em Ann. Probab.\/}~{\em 45}(6B), 4477--4528.


\bibitem[\protect\citeauthoryear{Basse-O'Connor, and
  Podolskij}{Basse-O'Connor and Podolskij}{2017}]{BasPod2017}
Basse-O'Connor, A., and M.~Podolskij (2017).
\newblock On critical cases in limit theory for stationary increments 
L\'evy driven moving averages.
\newblock  {\em Stochastics}~{\em 89}(1), 360--383.


\bibitem[\protect\citeauthoryear{Basse-O'Connor and Rosi\'nski}{Basse-O'Connor
  and Rosi\'nski}{2016}]{BasRos2016}
Basse-O'Connor, A. and J.~Rosi\'nski (2016).
\newblock On infinitely divisible semimartingales.
\newblock {\em Probab. Theory Related Fields\/}~{\em 164\/}(1-2), 133--163.

\bibitem[\protect\citeauthoryear{Benassi, Cohen, and Istas}{Benassi
  et~al.}{2004}]{BenCohIst2004}
Benassi, A., S.~Cohen, and J.~Istas (2004).
\newblock On roughness indices for fractional fields.
\newblock {\em Bernoulli\/}~{\em 10\/}(2), 357--373.

\bibitem[\protect\citeauthoryear{Berk}{Berk}{1973}]{Ber1973}
Berk, K. (1973).
\newblock A central limit theorem for {$m$}-dependent random variables with
  unbounded {$m$}.
\newblock {\em Ann. Probability\/}~{\em 1}, 352--354.

\bibitem[\protect\citeauthoryear{Billingsley}{Billingsley}{1999}]{Bil1999}
Billingsley, P. (1999).
\newblock {\em Convergence of Probability Measures\/} (Second ed.).
\newblock John Wiley \& Sons, Inc., New York.

\bibitem[\protect\citeauthoryear{Braverman and Samorodnitsky}{Braverman and
  Samorodnitsky}{1998}]{BraSam1998}
Braverman, M. and G.~Samorodnitsky (1998).
\newblock Symmetric infinitely divisible processes with sample paths in
  {O}rlicz spaces and absolute continuity of infinitely divisible processes.
\newblock {\em Stochastic Process. Appl.\/}~{\em 78\/}(1), 1--26.

\bibitem[\protect\citeauthoryear{Cambanis, Hardin, and Weron}{Cambanis
  et~al.}{1987}]{CamHarWer1987}
Cambanis, S., C.~Hardin, Jr., and A.~Weron (1987).
\newblock Ergodic properties of stationary stable processes.
\newblock {\em Stochastic Process. Appl.\/}~{\em 24\/}(1), 1--18.


\bibitem[\protect\citeauthoryear{Cohen, Lacaux, and Ledoux}{Cohen
  et~al.}{2008}]{CohLacLed2008}
Cohen, S., C.~Lacaux, and M.~Ledoux (2008).
\newblock A general framework for simulation of fractional fields.
\newblock {\em Stochastic Process. Appl.\/}~{\em 118\/}(9), 1489--1517.

\bibitem[\protect\citeauthoryear{Dang and Istas}{Dang and Istas}{2015}]{DanIst2015}
Dang, T.T.N.  and J.~Istas (2015).
\newblock Estimation of the Hurst and the stability indices
of a $H$-self-similar stable process.
\newblock {\em  {Electron. J. Stat.},\/}~{\em 11}(2), 4103--4140. 
	


\bibitem[\protect\citeauthoryear{Glaser}{Glaser}{2015}]{Gla2015}
Glaser, S. (2015).
\newblock A law of large numbers for the power variation of fractional {L}\'evy
  processes.
\newblock {\em Stoch. Anal. Appl.\/}~{\em 33\/}(1), 1--20.

\bibitem[\protect\citeauthoryear{Guyon and Leon}{Guyon and Leon}{1989}]{GuyLeo1989}
Guyon, X. and J.~Leon (1989).
\newblock Convergence en loi des {$H$}-variations d'un processus
              gaussien stationnaire sur {${\bf R}$.}
\newblock {\em Ann. I.H.P..\/}~{\em 25\/}, 265--282.


\bibitem[\protect\citeauthoryear{H\"ausler and Luschgy}{H\"ausler and
  Luschgy}{2015}]{HaeLus2015}
H\"ausler, E. and H.~Luschgy (2015).
\newblock {\em Stable Convergence and Stable Limit Theorems}.
\newblock Springer, Cham.

\bibitem[\protect\citeauthoryear{Ho and Hsing}{Ho and Hsing}{1997}]{HoHsi1997}
Ho, H. and T.~Hsing (1997).
\newblock Limit theorems for functionals of moving averages.
\newblock {\em Ann. Probab.\/}~{\em 25\/}(4), 1636--1669.

\bibitem[\protect\citeauthoryear{Jacod}{Jacod}{2008}]{Jac2008}
Jacod, J. (2008).
\newblock Asymptotic properties of realized power variations and related
  functionals of semimartingales.
\newblock {\em Stochastic Process. Appl.\/}~{\em 118\/}(4),
  517--559.

\bibitem[\protect\citeauthoryear{Jacod and Protter}{Jacod and
  Protter}{2012}]{JacPro2012}
Jacod, J. and P.~Protter (2012).
\newblock {\em Discretization of Processes}.
\newblock Springer, Heidelberg.

\bibitem[\protect\citeauthoryear{Kallenberg}{Kallenberg}{2002}]{Kal2002}
Kallenberg, O. (2002).
\newblock {\em Foundations of Modern Probability\/} (Second ed.).
\newblock Springer-Verlag, New York Berlin Heidelberg.

\bibitem[\protect\citeauthoryear{Lebovits and Podolskij}{Lebovits and Podolskij}{2017}]{LebPod2017}
Lebovits, J. and M.~Podolskij (2017).
\newblock Estimation of the global regularity of a multifractional Brownian motion
\newblock {\em Electron. J. Statist.\/}~{\em 11\/}(1), 78--98. 

\bibitem[\protect\citeauthoryear{Mazur, Otryakhin and Podolskij}{Mazur, Otryakhin and Podolskij}{2017}]{MazOtrPod2017}
Mazur, S., Otryakhin, D. and M.~Podolskij (2018).
\newblock Estimation of the linear  fractional  stable motion. 
\newblock {\em arXiv:1802.06373 [stat.ME]\/}. 



\bibitem[\protect\citeauthoryear{Pipiras, Taqqu and Abry}{Pipiras, Taqqu and Abry}{2007}]{PipTaqAbr2007}
Pipiras, V., Taqqu, M. and P. Abry (2007).
\newblock Bounds for the covariance of functions of infinite variance stable random variables
with applications to central limit theorems and wavelet-based estimation.
\newblock {\em Bernoulli\/}~{\em 13\/}(4), 1091--1123.

\bibitem[\protect\citeauthoryear{Pipiras and Taqqu}{Pipiras and
  Taqqu}{2003}]{PipTaq2003}
Pipiras, V. and M.~Taqqu (2003).
\newblock Central limit theorems for partial sums of bounded functionals of
  infinite-variance moving averages.
\newblock {\em Bernoulli\/}~{\em 9\/}(5), 833--855.

\bibitem{pip-taqqu}
Pipiras, V. and M. Taqqu (2017).
\newblock{\em  Long-Range Dependence and Self-Similarity.}
\newblock Cambridge University Press, Cambridge.
     
\bibitem[\protect\citeauthoryear{Rajput and Rosi\'nski}{Rajput and
  Rosi\'nski}{1989}]{RajRos1989}
Rajput, B. and J.~Rosi\'nski (1989).
\newblock Spectral representations of infinitely divisible processes.
\newblock {\em Probab. Theory Related Fields\/}~{\em 82\/}(3), 451--487.

\bibitem[\protect\citeauthoryear{R\'enyi}{R\'enyi}{1963}]{Ren1963}
R\'enyi, A. (1963).
\newblock On stable sequences of events.
\newblock {\em Sankhy\=a Ser. A\/}~{\em 25}, 293 302.


\bibitem[\protect\citeauthoryear{Samorodnitsky and Taqqu}{Samorodnitsky and
  Taqqu}{1994}]{SamTaq1994}
Samorodnitsky, G. and M.~Taqqu (1994).
\newblock {\em Stable Non-{G}aussian Random Processes}.
\newblock Chapman \& Hall, New York.


\bibitem[\protect\citeauthoryear{Samorodnitsky and Taqqu}{Samorodnitsky and
  Taqqu}{1994}]{Sato}
Sato, K. (2013). 
\newblock {\em L\'evy Processes and Infinitely Divisible Distributions}.
\newblock Cambridge University Press, Cambridge.



\bibitem[\protect\citeauthoryear{Skorohod}{Skorohod}{1956}]{Sko1956}
Skorohod, A. (1956).
\newblock Limit theorems for stochastic processes.
\newblock {\em Teor. Veroyatnost. i Primenen.\/}~{\em 1}, 289--319.

\bibitem[\protect\citeauthoryear{Surgailis}{Surgailis}{2002}]{Sur2002}
Surgailis, D. (2002).
\newblock Stable limits of empirical processes of moving
averages with infinite variance.
\newblock {\em Stochastic Process. Appl.\/}~{\em 100\/}, 255--274.


\bibitem[\protect\citeauthoryear{Surgailis}{Surgailis}{2004}]{Sur2004}
Surgailis, D. (2004).
\newblock Stable limits of sums of bounded functions of long-memory moving
  averages with finite variance.
\newblock {\em Bernoulli\/}~{\em 10\/}(2), 327--355.


\bibitem[\protect\citeauthoryear{Taqqu and Wolpert}{Taqqu and
  Wolpert}{1983}]{TaqWol1983}
Taqqu, M. and R.~Wolpert (1983).
\newblock Infinite variance self-similar processes subordinate to a {P}oisson
  measure.
\newblock {\em Z. Wahrsch. Verw. Gebiete\/}~{\em 62\/}(1), 53--72.


\bibitem{Wat07}
Watanabe, T. (2007). 
\newblock Asymptotic estimates of multi-dimensional stable densities and
              their applications
\newblock{\em Trans. Amer. Math. Soc.(359)}(6), 2851--2879.
     
		
\bibitem[\protect\citeauthoryear{Whitt}{Whitt}{2002}]{Whi2002}
Whitt, W. (2002).
\newblock {\em Stochastic-Process Limits}.
\newblock Springer-Verlag, New York.

\bibitem[\protect\citeauthoryear{Zolo}{Zolo}{1986}]{Zolo1986}
Zolotarev, V. M. (1986). 
\newblock{\em One-dimensional Stable Distributions}.
\newblock  American Mathematical Society, Providence, RI.



\end{thebibliography}

\end{document}